\newcolumntype{P}[1]{>{\centering\arraybackslash}m{#1}}
\newtheorem{theo}{Theorem}
\newtheorem{prop}{Proposition}[section]	
\newtheorem{defi}[prop]{Definition}
\newtheorem{conj}{Conjecture}
\newtheorem{lemm}[prop]{Lemma}
{}
\newtheorem{rem}{Remark}
\numberwithin{equation}{section}
\newtheorem{rmk}{Remark}
\newenvironment{manualtheorem}[1]{%
  \manualtheoreminner
}{\endmanualtheoreminner}
\DeclareMathOperator{\tr}{tr}
\newcommand\reallywidehat[1]{\arraycolsep=0pt\relax%
\begin{array}{c}
\stretchto{
  \scaleto{
    \scalerel*[\widthof{\ensuremath{#1}}]{\kern-.5pt\bigwedge\kern-.5pt}
    {\rule[-\textheight/2]{1ex}{\textheight}} %WIDTH-LIMITED BIG WEDGE
  }{\textheight} % 
}{0.5ex}\\           % THIS SQUEEZES THE WEDGE TO 0.5ex HEIGHT
#1\\                 % THIS STACKS THE WEDGE ATOP THE ARGUMENT
\rule{-1ex}{0ex}
\end{array}
}
\author{Tristan Humbert}
\email{humbertt@imj-prg.fr}
\address{Sorbonne Université, Paris France 75005.}
\begin{document}
\begin{abstract}
We show that, given a real or complex hyperbolic metric $g_0$ on a closed manifold $M$ of dimension $n\geq 3$, there exists a neighborhood $\mathcal U$ of $g_0$ in the space of negatively curved metrics such that for any $g\in \mathcal U$, the topological entropy and Liouville entropy of $g$ coincide if and only if $g$ and $g_0$ are homothetic. This provides a partial answer to Katok's entropy rigidity conjecture. As a direct consequence of our theorem, we obtain a local rigidity result of the hyperbolic rank near complex hyperbolic metrics.
\end{abstract}
\title{Katok's entropy conjecture near real and complex hyperbolic metrics} 
\maketitle
\section{Introduction}
\subsection{Statement of the problem}
Let $(M^n,g)$ be a smooth closed $n$-dimensional manifold equipped with a smooth Riemannian metric $g$ of negative sectional curvature. The geodesic flow $\varphi_t$ on the unit tangent bundle $SM:=\{(x,v)\in TM\mid \|v\|_g=1\}$ is an \emph{Anosov flow}, which means that if we denote by $X=\tfrac{d}{dt}\varphi_t|_{t=0}$ the geodesic vector field, then there exists a flow-invariant, continuous splitting $T(SM)=E_u\oplus \mathbb R X\oplus E_s$ of the tangent bundle and uniform constants $C,\theta>0$ such that for any $p\in SM$, 
\begin{align*}
&\|d\varphi_t (p)v_s\|\leq Ce^{-\theta t}\|v_s\|, \ v_s\in E_s(x), \  t\geq 0,
\\&\|d\varphi_t (p)v_u\|\leq Ce^{-\theta |t|}\|v_u\|, \ v_u\in E_u(x), \ t\leq 0.
\end{align*}
The metric $g$ lifts to a metric $g_{\mathrm{Sas}}$ on $SM$ called the Sasaki metric (see \cite[Chapter 1]{Pat}) and the norms in the previous inequalities are taken with respect to $g_{\mathrm{Sas}}$. The Sasaki metric defines a natural smooth Riemannian volume form $\mu_{\mathrm{Liou}}$ which is also the Liouville form associated to the contact structure of $SM$. The (normalized) measure $\mu_{\mathrm{Liou}}$ is called the Liouville measure, it is smooth and invariant by the flow.

An Anosov flow has infinitely many invariant probability measures. The following result is classical and known as the \emph{variational principle}, see for instance \cite[Corollary 4.3.9]{FishHas},
\begin{equation}
\label{eq:varia}
\mathrm{Ent}_{\mathrm{top}}(g):=\mathrm{Ent}_{\mathrm{top}}(\varphi_1^g)=\sup\{ \mathrm{Ent}(\varphi_1^g,\mu)\mid \mu \text{ invariant  probability measure}\}.
\end{equation}
Here, $\mathrm{Ent}_{\mathrm{top}}(\varphi_1^g)$ denotes the topological entropy of the time-one map and $\mathrm{Ent}(\varphi_1^g,\mu)$ denotes the metric entropy of the time one map with respect to the measure $\mu$.
Moreover, for an Anosov flow, the supremum is attained for a unique invariant measure $\mu_{\mathrm{BM}}$ for which we have $\mathrm{Ent}(\varphi_1^g,\mu_{\mathrm{BM}})=\mathrm{Ent}_{\mathrm{top}}(g)$.
This measure $\mu_{\mathrm{BM}}$ is the measure of maximal entropy (or Bowen-Margulis measure). A natural question, first raised by Katok in \cite[Section 2]{Ka}, is to characterize the negatively curved metrics $g$ for which the Liouville measure and the measure of maximal entropy coincide. Note that because of the variational principle recalled above, this is equivalent to $\mathrm{Ent}_{\mathrm{Liou}}(g):=\mathrm{Ent}(\varphi_1^g,\mu_{\mathrm{Liou}})=\mathrm{Ent}_{\mathrm{top}}(g)$.
The following conjecture is known as \emph{Katok's entropy rigidity conjecture} \cite{Ka,BK}.
\begin{conj}[Katok]
\label{conjKatok}
For a closed negatively curved manifold $(M^n,g)$, one has $\mathrm{Ent}_{\mathrm{Liou}}(g)=\mathrm{Ent}_{\mathrm{top}}(g)$ if and only if $g$ is locally symmetric. 
\end{conj}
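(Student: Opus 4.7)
My plan is to reduce the conjecture to a pointwise curvature rigidity statement by comparing integral representations of the two entropies. Pesin's formula in negative curvature gives the exact identity
\[
\mathrm{Ent}_{\mathrm{Liou}}(g) = \int_{SM} \tr\bigl(U(p)\bigr)\, d\mu_{\mathrm{Liou}}(p),
\]
where $U(p)$ is the second fundamental form of the unstable horosphere through $p$, a symmetric solution of the matrix Riccati equation $U' + U^2 + R = 0$ along geodesic orbits. For the topological side, Manning's theorem identifies $\mathrm{Ent}_{\mathrm{top}}(g)$ with the exponential volume-growth rate of balls in the universal cover $(\widetilde M, \tilde g)$, which can be analyzed through the Patterson--Sullivan conformal density on $\partial \widetilde M$ of dimension $\mathrm{Ent}_{\mathrm{top}}(g)$ together with an associated Gibbs identity relating volume growth to horospherical mean curvature.

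The second step is to find a curvature functional $\Phi(g)$ sandwiched between the two entropies,
\[
\mathrm{Ent}_{\mathrm{top}}(g) \;\geq\; \Phi(g) \;\geq\; \mathrm{Ent}_{\mathrm{Liou}}(g),
\]
so that the hypothesis $\mathrm{Ent}_{\mathrm{Liou}}=\mathrm{Ent}_{\mathrm{top}}$ saturates both inequalities at once. In dimension two this is Katok's $\Phi=\sqrt{-\overline K}$: concavity of $\sqrt{\cdot}$ gives the Liouville upper bound via Jensen, and Gauss--Bonnet together with a volume-growth estimate gives the topological lower bound. In higher dimensions I would seek $\Phi(g)$ as a concave function of an averaged scalar curvature invariant, obtaining the Liouville upper bound by combining Pesin's formula with operator concavity of $A \mapsto \tr\sqrt{A}$ on positive symmetric matrices and Jensen's inequality. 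The topological lower bound is the harder direction; the most promising route is to compare volume growth in $\widetilde M$ with a model curvature average using a Bishop--G\"unther-type infinitesimal comparison together with a Besson--Courtois--Gallot-style barycenter/calibration argument in reverse.

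Saturation in both bounds should force, by the equality case of matrix concavity, that the unstable Riccati solution $U$ has a spectrum determined pointwise by the averaged curvature invariant. The Riccati equation then propagates this spectral constraint along every orbit, and combined with invariance of the unstable bundle should force the normal curvature operator $R(\cdot,\dot\gamma)\dot\gamma$ to have the algebraic form of a rank-one symmetric space at every point. A Berger--Simons-type holonomy argument, or an application of the Foulon--Labourie rigidity of smooth Anosov foliations after upgrading the regularity of the weak stable/unstable splitting, would then conclude local symmetry via Heintze's classification.

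The main obstacle, and the reason the full conjecture remains open, is precisely this final rigidity step. Promoting a spectral identity on $U$ at $\mu_{\mathrm{Liou}}$-almost every point into full parallelism of the curvature tensor has so far required either strong extra regularity of the Anosov splitting (as in the present paper near the symmetric model) or an extra structural hypothesis: low dimension (Katok), proximity to a hyperbolic metric (Flaminio), hyperbolic rank (Hamenst\"adt), or the volume-entropy formulation of Besson--Courtois--Gallot. Without such input I do not see a way past the gap between averaged curvature identities and pointwise local symmetry, and I expect that any complete proof will need a genuinely new geometric ingredient --- perhaps a boundary-regularity theorem for stable and unstable foliations beyond the generic $C^{1+\alpha}$ threshold, or a global variational principle for the Anosov splitting --- before the barycentric and Riccati steps above can be forced to close up.
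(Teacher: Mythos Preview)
The statement you were asked to prove is a \emph{conjecture} in the paper, not a theorem: the paper does not claim to prove it. Katok's entropy rigidity conjecture remains open in general; what the paper establishes is a \emph{local} version near real and complex hyperbolic metrics (Theorem~\ref{newtheo}), via a Hessian computation for the functional $\Phi(g)=\mathrm{Ent}_{\mathrm{top}}(g)-\mathrm{Ent}_{\mathrm{Liou}}(g)$ at a locally symmetric $g_0$, combined with microlocal coercivity of the generalized X-ray transform and a solenoidal-injectivity argument for an explicit differential operator $Q$.

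Your proposal is not a proof either, and to your credit you say so explicitly: you outline a two-sided sandwich $\mathrm{Ent}_{\mathrm{top}}\geq \Phi\geq \mathrm{Ent}_{\mathrm{Liou}}$ in the spirit of Katok's surface argument, then identify the gap --- passing from a spectral constraint on the Riccati solution $U$ to parallelism of the curvature tensor --- as the genuine obstruction. That diagnosis is accurate. But several of the intermediate steps are also not established: the existence of a concave curvature functional $\Phi$ satisfying the topological lower bound in dimension $n\geq 3$ is precisely what fails to generalize from Katok's argument (there is no Gauss--Bonnet replacement), and the ``Besson--Courtois--Gallot in reverse'' step is speculative. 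So the proposal is an honest sketch of why the problem is hard rather than a proof strategy with a single missing lemma.

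Since the paper contains no proof of the full conjecture, there is nothing to compare your attempt against. If the intent was to address what the paper \emph{does} prove, you should instead be looking at the local statement: Taylor-expand $\Phi$ at $g_0$, show the Hessian is the variance of an explicit function $\pi_{m(g_0)}^*Q(S)$, and prove $\Pi_{\mathrm{Ker}(D_{g_0}^*)}Q$ is solenoidally injective via Weitzenb\"ock and Pestov estimates.
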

A negatively curved locally symmetric manifold is covered by a either a real,
complex, quarternionic or octonionic hyperbolic space, see Subsection \ref{locsym} for more details.

The main theorem of the current paper is the following. Recall that two metrics $g_1$ and $g_2$ are homothetic if there exists a constant $\lambda>0$ such that $g_1$ and $\lambda g_2$ are isometric.
\begin{theo}[Entropy rigidity near real and complex hyperbolic spaces]
\label{newtheo}
Let $(M^n,g_0)$ be a closed manifold of dimension $n\geq 3$ and suppose that $g_0$ is either a real hyperbolic or a complex hyperbolic metric. Then there exists $N(n)\in \mathbb N$ and $\epsilon>0$ such that for any negatively curved metric $g$ on $M$ with $\|g-g_0\|_{C^N}<\epsilon$, if one has $\mathrm{Ent}_{\mathrm{Liou}}(g)=\mathrm{Ent}_{\mathrm{top}}(g),$ then $g$ is homothetic to $g_0$ and thus locally symmetric.
\end{theo}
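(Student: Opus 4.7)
The plan is to argue via a Hessian computation at the symmetric metric $g_0$. Consider the non-negative functional
\[
F(g) := \mathrm{Ent}_{\mathrm{top}}(g) - \mathrm{Ent}_{\mathrm{Liou}}(g),
\]
which by \eqref{eq:varia} satisfies $F(g)\geq 0$ and $F(g_0)=0$, since at a locally symmetric negatively curved metric the Liouville measure is the unique measure of maximal entropy. I would aim to show that $g_0$ is a Morse--Bott minimum of $F$: namely, that the Hessian $\mathrm{Hess}_{g_0}F$ is positive semi-definite on symmetric $2$-tensors and its null space is exactly the tangent space to the orbit $\{\lambda\,\phi^*g_0 : \lambda>0,\ \phi\in\mathrm{Diff}(M)\}$. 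A Morse / \L{}ojasiewicz argument, together with closedness of this orbit in $C^N$ and regularity of the dependence $g\mapsto \mu^g_{\mathrm{BM}}, \mu^g_{\mathrm{Liou}}$, then promotes this infinitesimal rigidity to the stated local statement: any $g$ close to $g_0$ in $C^N$ with $F(g)=0$ must lie in the orbit, i.e.\ be homothetic to $g_0$.

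The analytic input would be manageable expressions for the two entropies. Via Pesin's identity and the Margulis construction of $\mu_{\mathrm{BM}}$ one writes
\[
\mathrm{Ent}_{\mathrm{Liou}}(g)=\int_{SM} J^u_g\, d\mu_{\mathrm{Liou}}^g, \qquad \mathrm{Ent}_{\mathrm{top}}(g)=\int_{SM} J^u_g\, d\mu_{\mathrm{BM}}^g,
\]
where $J^u_g$ is the infinitesimal unstable expansion rate of $\varphi^g_t$. Hence $F(g)=\int_{SM} J^u_g\,(d\mu_{\mathrm{BM}}^g-d\mu_{\mathrm{Liou}}^g)$, and since the two measures coincide at $g_0$, the first variation of $F$ vanishes there. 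Differentiating a second time, and using Livshitz-type cohomology together with linear response formulas for SRB and equilibrium states (which I expect to handle microlocally through the meromorphic continuation of the resolvent $(X+z)^{-1}$ of the geodesic vector field $X$ on suitable anisotropic spaces), one obtains a quadratic form $Q(\dot g,\dot g)$ on symmetric $2$-tensors $\dot g=\partial_\tau g_\tau|_{\tau=0}$ whose positivity controls $\mathrm{Hess}_{g_0}F$.

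The hard part is proving positivity of $Q$ with the correct null space. In the real hyperbolic case this essentially reduces, after a Plancherel decomposition of $L^2(SM)$ under $\mathrm{SO}(n,1)$, to explicit spectral lower bounds for horocyclic Laplacians on $K$-types, which is close to Flaminio's analysis. The genuinely new difficulty is the complex hyperbolic case: the stable and unstable bundles are only $C^{1+\alpha}$, the strong unstable distribution is not smoothly integrable, and horocyclic differentiation involves the non-commuting Heisenberg directions in $\mathrm{SU}(n,1)$. I would address this using anisotropic Sobolev spaces adapted to the Anosov splitting of $g_0$ combined with the branching rules for $\mathrm{SU}(n,1)$, yielding the analogous coercive bound on $Q$. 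Finally, identifying $\ker Q$ as infinitesimal homotheties $c\,g_0$ plus Lie derivatives $\mathcal L_V g_0$ amounts to ruling out any genuine negatively curved infinitesimal deformation for which $F$ vanishes to second order; this is where the rank-one symmetric geometry of $g_0$ enters decisively, and it is also where the corollaries on $C^2$ Anosov foliations and hyperbolic rank near complex hyperbolic metrics should emerge as by-products.
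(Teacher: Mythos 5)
Your overall shape (second variation of $\Phi=\mathrm{Ent}_{\mathrm{top}}-\mathrm{Ent}_{\mathrm{Liou}}$ at $g_0$, positivity transverse to the gauge orbit) matches the paper's, but there are three genuine gaps. First, and most importantly, the passage from ``the Hessian is positive definite transverse to the orbit'' to the local rigidity statement is exactly the step that separates Flaminio's \emph{infinitesimal} result from the theorem you are asked to prove, and a generic Morse--Bott/\L{}ojasiewicz argument does not supply it: in this infinite-dimensional setting the Taylor remainder is $O(\|\phi^*g-g_0\|_{C^{5,\alpha}}^3)$ while mere positivity of the Hessian gives no norm control, so one cannot absorb the cubic error. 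The paper's mechanism is quantitative: the Hessian is rewritten as $\langle \Pi_{m}Q(S),Q(S)\rangle-\langle\pi_m^*Q(S),1\rangle^2$ with $\Pi_m$ the generalized X-ray transform and $Q$ an explicit differential operator; the coercive estimate \eqref{eq:coercive} for $\Pi_m$ plus ellipticity of $\Pi_{\mathrm{Ker}(D_{g_0}^*)}Q$ on solenoidal tensors (Proposition \ref{elliptic}) yield $\|S\|_{H^{3/2}}^2\lesssim \Phi(g)+(\mathrm{Vol}_g-\mathrm{Vol}_{g_0})^2$, and the cubic remainder is then absorbed by interpolation against the a priori $C^N$ bound. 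Without an $H^{3/2}$-coercivity of this kind your argument stalls at an infinitesimal statement along smooth deformations.

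Second, your route to positivity in the complex hyperbolic case (Plancherel for $\mathrm{SU}(n,1)$, branching rules, anisotropic spaces) is a research program rather than a proof, and it is precisely the representation theory that the paper deliberately avoids: the solenoidal injectivity of $\Pi_{\mathrm{Ker}(D_{g_0}^*)}Q$ (Theorem \ref{soltheo}) is proved instead via Pestov-type inequalities for $X$ and for the conjugated field $H=JXJ^{-1}$, the ladder operators $\eta_\pm^\pm$ on eigenspaces of $V$, and a Weitzenb\"ock formula on trace-free $2$-tensors. Note also that the kernel of the Hessian is not a priori the gauge orbit plus homotheties; showing that the residual finite-dimensional kernel $\mathrm{Ker}(\Pi_{\mathrm{Ker}(D_{g_0}^*)}Q\Pi_{\mathrm{Ker}(D_{g_0}^*)})$ is trivial is the content of Theorem \ref{soltheo} and is the genuinely new geometric input, not a by-product. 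Third, your identity $\mathrm{Ent}_{\mathrm{top}}(g)=\int_{SM}J^u_g\,d\mu^g_{\mathrm{BM}}$ is false in general: Ruelle's inequality gives only $\mathrm{Ent}(\varphi^g_1,\mu_{\mathrm{BM}})\leq\int J^u_g\,d\mu_{\mathrm{BM}}$, with equality only when $\mu_{\mathrm{BM}}$ is the SRB measure, so your expression for $F(g)$ is not exact away from $g_0$ and cannot be differentiated twice as written; the correct second variation is the variance formula $\tfrac{d^2}{d\epsilon^2}\Phi(g_\epsilon)|_{\epsilon=0}=\mathrm{Var}_{\mu_{\mathrm{Liou}}}(\tr(\nabla_\xi U))$ obtained from Flaminio's careful expansion.
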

We actually prove a more precise result which holds for any type of locally symmetric metric (which we will make explicit in the rest of the introduction), see Theorem \ref{MainTheo} below.
\subsection{Existing results}
Katok showed  Conjecture \ref{conjKatok} for surfaces (i.e., $n=2$) \cite{Ka}, and his proof relied on the fact that any negatively curved metric is conformally equivalent to a hyperbolic metric in this case. His argument extends to any dimension for metrics in the conformal class of a locally symmetric metric but this does not exhaust all metrics in dimension $n\geq 3$.

We note that Foulon \cite{Fou}, and then De Simoi, Leguil, Vinhage and Yang \cite{SLVY}, generalized the result of Katok. In their setting, the geodesic flow on the 3-dimensional manifold $SM$ is replaced by a general Anosov flow on a 3-manifold. The Liouville measure is replaced by the contact volume or a smooth invariant measure, and they show that equality with the measure of maximal entropy implies that the system is conjugate to an algebraic flow (see \cite{Fou,SLVY} for the precise statements).

In higher dimensions, fewer results are known. Let us first state a major result of Besson, Courtois, Gallot \cite{BCG} which asserts that for a closed manifold $M^n$ which admits a locally symmetric metric $g_0$ (unique in this case by Mostow's rigidity theorem), then $g\mapsto\mathrm{Ent}_{\mathrm{top}}(g)$ is (strictly) minimized at $g_0$ among negatively curved metrics $g$ with volume equal to that of $g_0$. For the entropy of the Liouville measure, Flaminio \cite[Theorem C]{Fla} showed that there exist hyperbolic metrics in dimension $3$ which neither maximize nor minimize $g\mapsto \mathrm{Ent}_{\mathrm{Liou}}(g)$ among metrics of constant volume, see also \cite{Mau} for a generalization to higher dimensions.

Flaminio in \cite[Theorem A]{Fla} obtained an "infinitesimal" version of Katok's conjecture near real hyperbolic metrics in any dimension. To state his result, let us first introduce a functional $\Phi$ defined on negatively curved metrics:
\begin{equation}
\label{eq:phi}
 \Phi(g):=\mathrm{Ent}_{\mathrm{top}}(g)-\mathrm{Ent}_{\mathrm{Liou}}(g)\geq 0.
\end{equation}
By the variational principle \eqref{eq:varia}, the equality of $\mu_{\mathrm{Liou}}^g=\mu_{\mathrm{BM}}^g$ is equivalent to $\Phi(g)=0$. We note that there exists a natural gauge by the action of the group $\mathrm{Diff}_0(M)$ of diffeomorphisms isotopic to the identity. We define
$$\mathcal O(g_0):=\{\phi^*g_0\mid \phi \in \mathrm{Diff}_0(M)\},\ T_{g_0}\mathcal O(g_0):=\{\mathcal L_Vg_0\mid V\in C^{\infty}(M;TM)\}. $$
In particular, the functional $\Phi$ is constant along any orbit $\mathcal O(g),$ and the kernel of the Hessian $d^2\Phi(g_0)$ always contains the tangent space $T_{g_0}\mathcal O(g_0)$. If $g_0$ is locally-symmetric, then $\Phi(g_0)=0$ and $d\Phi(g_0)=0$ as $g_0$ is a critical point of $\Phi$. Then Flaminio proved that for a real hyperbolic metric in any dimension $n\geq 3$, the Hessian $d^2\Phi(g_0)$ is a positive quadratic form with kernel given exactly by $T_{g_0}\mathcal O(g_0)$. This infinitesimal version of Katok's entropy conjecture shows that if we consider a smooth deformation $(g_\lambda)_{\lambda \in (-\epsilon,\epsilon)}$ of $g_0$ (in the space of negatively curved metrics of same volume) for which one has $\Phi(g_\lambda)\equiv 0$, then the deformation is tangent to the orbit of $g_0$ at $g_0$.

\subsection{Entropy rigidity near locally symmetric metrics}
In this paper, the analysis is made "transversally" to the tangent space $T_{g_0}\mathcal O(g_0)$. For a nearby metric $g$, we will first need to "project" it on $\mathrm{Ker}(D_{g_0}^*):=(T_{g_0}\mathcal O(g_0))^{\perp}$ (the space of divergence-free symmetric two-tensors, see Lemma \ref{lemmD*}). For $g$ close enough to $g_0$, its orbit $\mathcal O(g)$ intersects $\mathrm{Ker}(D_{g_0}^*)$ at one single point which we will denote by $\phi_g^*g$, see Lemma \ref{slice lemma} for a precise statement. We will denote by $\Pi_{\mathrm{Ker}(D_{g_0}^*)}$ the orthogonal projection from the space of symmetric tensors onto $\mathrm{Ker}(D_{g_0}^*)$.
Using microlocal methods, we prove the following result, which is valid for any locally symmetric metric. 
\begin{theo}
\label{MainTheo}
Let $(M^n,g_0)$ be a closed manifold with $n\geq 3$ and $g_0$ a locally symmetric negatively curved metric. Then there exists a differential operator $Q$ $($made explicit in Theorem \ref{theoHess}$)$   and an open neighborhood $\mathcal U$ of $g_0$ in the $C^{ 3n/2 +7}$-topology, such that if $g\in \mathcal U$ satisfies $\phi_g^*g-g_0\in \mathrm{Ker}(\Pi_{\mathrm{Ker}(D_{g_0}^*)} Q\Pi_{\mathrm{Ker}(D_{g_0}^*)})^{\perp}$, one has
\begin{equation}
\label{eq:stability}
\|\phi_g^*g-g_0\|_{H^{3/2}}^2\leq C_n(\mathrm{Ent}_{\mathrm{top}}(g)-\mathrm{Ent}_{\mathrm{Liou}}(g))+C_n(\mathrm{Vol}_g(M)-\mathrm{Vol}_{g_0}(M))^2
\end{equation}
for some constant $C_n>0$. In particular, if $\mathrm{Vol}_g(M)=\mathrm{Vol}_{g_0}(M)$ and $\mathrm{Ent}_{\mathrm{top}}(g)=\mathrm{Ent}_{\mathrm{Liou}}(g)$, then $g$ is isometric to $g_0$ and thus locally symmetric.
\end{theo}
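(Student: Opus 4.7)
\vspace{0.3em}
\noindent\textbf{Proof proposal.} The plan is to combine a second-order Taylor expansion of $\Phi(g)=\mathrm{Ent}_{\mathrm{top}}(g)-\mathrm{Ent}_{\mathrm{Liou}}(g)$ at $g_0$ with a coercivity estimate for its Hessian on the divergence-free slice. First, invoking the slice lemma (Lemma \ref{slice lemma}), I replace $g$ by its gauge-fixed representative $\tilde g:=\phi_g^*g$, so that $h:=\tilde g-g_0$ lies in $\mathrm{Ker}(D_{g_0}^*)$, and by hypothesis $h$ is $L^2$-orthogonal to $\mathrm{Ker}(\Pi_{\mathrm{Ker}(D_{g_0}^*)}Q\Pi_{\mathrm{Ker}(D_{g_0}^*)})$. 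Since both $\Phi$ and $\mathrm{Vol}$ are diffeomorphism-invariant, neither side of \eqref{eq:stability} is affected by this reduction, so it suffices to prove the inequality for $h$.

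Second, since $g_0$ is locally symmetric one has $\Phi(g_0)=0$ and $d\Phi(g_0)=0$ ($g_0$ is a critical point of $\Phi$). Separating the volume-scaling direction, along which $\Phi$ transforms by a simple homogeneity, from the transverse directions, a second-order Taylor expansion reads
\begin{equation*}
\Phi(\tilde g)=\tfrac12\langle Qh,h\rangle_{g_0}+R(h)+O\big((\mathrm{Vol}_g(M)-\mathrm{Vol}_{g_0}(M))^2\big),
\end{equation*}
where $Q$ is the Hessian operator supplied by Theorem \ref{theoHess} and $R(h)$ is the cubic remainder. I would establish $|R(h)|\leq C\|h\|_{C^N}\,\|h\|_{H^{3/2}}^2$ using structural stability of the Anosov splitting and continuity of the dynamical and microlocal objects (Liouville measure, Bowen--Margulis measure, resolvent of $X$) in the $C^N$-topology with $N=3n/2+7$; this exponent is precisely what is needed so that the pseudodifferential constructions underlying $Q$ remain uniformly controlled over $\mathcal U$.

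The decisive input is Theorem \ref{theoHess}, from which I would extract the coercivity estimate
\begin{equation*}
\langle Qh,h\rangle_{g_0}\geq c\,\|h\|_{H^{3/2}}^2 \quad \text{for all } h\in\mathrm{Ker}(\Pi_{\mathrm{Ker}(D_{g_0}^*)}Q\Pi_{\mathrm{Ker}(D_{g_0}^*)})^{\perp}\cap\mathrm{Ker}(D_{g_0}^*).
\end{equation*}
Combining this with the expansion and shrinking $\mathcal U$ so that $C\|h\|_{C^N}\leq c/4$, I absorb $R(h)$ into the quadratic term, which yields
\begin{equation*}
\tfrac{c}{4}\,\|h\|_{H^{3/2}}^2\leq \Phi(g)+C_n\big(\mathrm{Vol}_g(M)-\mathrm{Vol}_{g_0}(M)\big)^2,
\end{equation*}
giving \eqref{eq:stability}. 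The rigidity statement then follows immediately: if $\Phi(g)=0$ and the volumes agree, then $h\equiv 0$, so $\tilde g=g_0$ and $g$ is isometric to $g_0$.

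The main obstacle is, without doubt, the coercivity of $Q$: one must show that $\Pi_{\mathrm{Ker}(D_{g_0}^*)}Q\Pi_{\mathrm{Ker}(D_{g_0}^*)}$ is non-negative, self-adjoint, of the right pseudodifferential order, with finite-dimensional kernel and a lower bound in the fractional Sobolev norm $H^{3/2}$. This is where essentially all of the analytic and representation-theoretic content of the paper is concentrated, extending Flaminio's real hyperbolic infinitesimal computation to all locally symmetric backgrounds via microlocal analysis of the geodesic flow. By contrast, the slice reduction, the Taylor expansion, the remainder estimate, and the absorption step are structurally routine once $Q$ is understood.
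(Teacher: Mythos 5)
Your skeleton (gauge-fix via Lemma \ref{slice lemma}, Taylor-expand $\Phi$ at the critical point $g_0$, invoke a coercivity estimate on the slice, absorb the cubic remainder by interpolation) matches the paper's, and your remainder/absorption step is essentially what the paper does. But there is a genuine gap in how you represent the Hessian, and it changes what actually has to be proven. The operator $Q$ of Theorem \ref{theoHess} is \emph{not} the Hessian quadratic form: the identity \eqref{eq:theoHess} reads
\begin{equation*}
\langle d^2\Phi(g_0)S,S\rangle \;=\;\langle \Pi_{m(g_0)}^{g_0}Q(S),Q(S)\rangle-\langle \pi_{m(g_0)}^*Q(S),1\rangle_{L^2(SM)}^2,
\end{equation*}
i.e.\ schematically $d^2\Phi(g_0)=Q^*(\Pi_{m(g_0)}-|1\rangle\langle 1|)Q$, where $\Pi_{m(g_0)}$ is the generalized X-ray transform. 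Your pairing $\langle Qh,h\rangle$ is not even defined in the complex, quaternionic and octonionic cases, where $Q$ maps $2$-tensors to $4$-tensors (the paper points this out explicitly at the start of Section \ref{sectionsinj}). Consequently your "decisive input" $\langle Qh,h\rangle\geq c\|h\|_{H^{3/2}}^2$ is the wrong statement: the actual chain is (a) the coercivity \eqref{eq:coercive} of $\Pi_{m(g_0)}$ on solenoidal tensors, which bounds $\|\Pi_{\mathrm{Ker}(D_{g_0}^*)}Q(h)\|_{H^{-1/2}}^2$ by the Hessian, followed by (b) the ellipticity of $\Pi_{\mathrm{Ker}(D_{g_0}^*)}Q$ as an order-$2$ operator (Proposition \ref{elliptic}), which upgrades $\|\Pi_{\mathrm{Ker}(D_{g_0}^*)}Qh\|_{H^{-1/2}}$ to $\|h\|_{H^{3/2}}$ modulo the finite-dimensional kernel that the hypothesis of the theorem excludes. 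The finite-dimensionality of that kernel, and hence the very meaning of the orthogonality hypothesis, comes from this ellipticity; it is not a hypothesis one can simply assume about an abstract "Hessian operator".

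The second omission is the negative averaging term $-\langle\pi_{m(g_0)}^*Q(h),1\rangle^2$ in \eqref{eq:theoHess}. This is where the volume-squared term on the right of \eqref{eq:stability} actually comes from: Lemma \ref{averaging} shows, by a case-by-case computation pairing $Q(h)$ against $L(g_0)$ and using the first-order volume expansion, that this term is $O(|\mathrm{Vol}_g(M)-\mathrm{Vol}_{g_0}(M)|+\|h\|_{C^{5,\alpha}}^2)$. Your appeal to "a simple homogeneity in the volume-scaling direction" does not substitute for this: without controlling the averaging term you cannot deduce any lower bound on $\langle \Pi_{m(g_0)}Q(h),Q(h)\rangle$ from $\Phi(g)$, and the direction $h\propto g_0$ lies in the kernel of the Hessian on the slice, so no unconditional coercivity of the form you state can hold. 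In short, the architecture is right but the two load-bearing walls --- the X-ray transform $\Pi_{m(g_0)}$ and the volume control of the averaging term --- are missing from your proposal.
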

The meaning of the previous theorem is the following. For any locally symmetric metric $g_0$, there is a finite-dimensional space of directions (given by the kernel of $\Pi_{\mathrm{Ker}(D_{g_0}^*)} Q\Pi_{\mathrm{Ker}(D_{g_0}^*)}$) in the transverse slice $\mathrm{Ker}(D_{g_0}^*)$ such that if a nearby metric $g$ projects orthogonally to it, the equalities of entropies and volumes imply that $g$ is locally symmetric. Here, $H^{3/2}$ denotes the usual Sobolev space. Note that using the boundedness in $C^{3n/2+7}$-norm and an interpolation argument, one could replace the $H^{3/2}$-norm in the left hand side by a $C^{k}$-norm for any $k<3n/2$, at the cost of replacing the right hand side by some (explicit) power $\delta(k,n)>0$ of it.

The proof of Theorem \ref{newtheo} then reduces to proving that the aforementioned operator $\Pi_{\mathrm{Ker}(D_{g_0}^*)} Q\Pi_{\mathrm{Ker}(D_{g_0}^*)}$ is injective. With our current techniques, we can only prove this for real and complex hyperbolic metrics.
\begin{theo}[Solenoidal injectivity]
\label{soltheo}
The operator $\Pi_{\mathrm{Ker}(D_{g_0}^*)} Q\Pi_{\mathrm{Ker}(D_{g_0}^*)}$ is injective if $g_0$ is a real or complex hyperbolic metric.
\end{theo}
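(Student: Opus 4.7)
My plan is to exploit the explicit form of $Q$ provided by Theorem \ref{theoHess} to recast the injectivity statement as a combination of (a) solenoidal injectivity of a geodesic X-ray type transform on symmetric $2$-tensors and (b) non-vanishing of an explicit multiplier on each irreducible $K$-type of the unit tangent bundle. Based on the microlocal framework announced in the paper, I expect $Q$ to have the structure $\pi_{2*}\,\mathcal{M}\,\pi_2^*$, where $\pi_2^*$ lifts a symmetric $2$-tensor to a fibre-polynomial function on $SM$, $\mathcal{M}$ is a Fourier-type multiplier in the spectral parameters of the geodesic vector field and the horocyclic foliations, and $\pi_{2*}$ is its fibre-wise adjoint. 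Computing $\langle \Pi_{\mathrm{Ker}(D_{g_0}^*)}Q\Pi_{\mathrm{Ker}(D_{g_0}^*)} h, h\rangle$ for a divergence-free $h$ should then amount to integrating $|\pi_2^* h|^2$ weighted by the symbol of $\mathcal{M}$.

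For the \emph{real hyperbolic} case, the conclusion should be extracted from Flaminio's infinitesimal entropy rigidity theorem. Flaminio proved that the full Hessian $d^2\Phi(g_0)$ is a positive quadratic form with kernel exactly $T_{g_0}\mathcal{O}(g_0)$. Since $\mathrm{Ker}(D_{g_0}^*)$ is the $L^2$-orthogonal complement of $T_{g_0}\mathcal{O}(g_0)$, the compression $\Pi_{\mathrm{Ker}(D_{g_0}^*)}(d^2\Phi(g_0))\Pi_{\mathrm{Ker}(D_{g_0}^*)}$ is strictly positive on $\mathrm{Ker}(D_{g_0}^*)$, hence injective. Identifying $Q$ with the principal part of $d^2\Phi(g_0)$ and invoking a standard ellipticity or perturbation argument to transfer the injectivity from the Hessian to $Q$ then closes this case.

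For the \emph{complex hyperbolic} case, no such prior positivity result is available and the argument must proceed by direct computation. Using harmonic analysis on $SM$ (viewed as a $\Gamma$-quotient of an appropriate homogeneous space of $\mathrm{SU}(n,1)$), I would decompose symmetric $2$-tensors into isotypic components under the maximal compact subgroup $K=\mathrm{S}(\mathrm{U}(n)\times\mathrm{U}(1))$. On each isotypic component, Schur's lemma reduces $Q$ to scalar multiplication by an explicit symbol expressible in terms of Casimir eigenvalues and the two distinct stable Lyapunov exponents of the complex hyperbolic geodesic flow. Injectivity on the solenoidal slice then reduces to showing that these symbols do not vanish on $K$-types carrying non-trivial solenoidal content; this can be cross-checked against the solenoidal injectivity of the X-ray transform on symmetric $2$-tensors known for complex hyperbolic manifolds (through work of Dairbekov-Sharafutdinov and more recent microlocal refinements).

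The main obstacle, I expect, is the $K$-type bookkeeping in the complex hyperbolic setting: the CR structure splits the stable distribution $E_s$ into two subbundles with distinct contraction rates, so the decomposition of symmetric $2$-tensors contains several irreducible pieces that must all be analyzed individually. Ruling out accidental zeros of the multiplier on each solenoidal $K$-type is the delicate technical point, requiring a careful second-variation computation that distinguishes the "holomorphic" and "anti-holomorphic" contributions to the Hessian; once this positivity is established on each piece, the global injectivity of $\Pi_{\mathrm{Ker}(D_{g_0}^*)}Q\Pi_{\mathrm{Ker}(D_{g_0}^*)}$ follows from orthogonality of the isotypic decomposition.
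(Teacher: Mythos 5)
There is a genuine gap here: the complex hyperbolic case, which is the actual new content of the theorem, is not proved but only planned, and the plan's decisive step is exactly the statement to be shown. Your real hyperbolic argument via Flaminio is a legitimate alternative in spirit: if $\Pi_{\mathrm{Ker}(D_{g_0}^*)}QS=0$ then $QS$ is a potential tensor $D_{g_0}p$, so $\pi_2^*QS=X\pi_1^*p$ is a coboundary, the variance in \eqref{eq:theoHess} vanishes, and Flaminio's nondegeneracy of $d^2\Phi(g_0)$ forces $S\in T_{g_0}\mathcal O(g_0)\cap\mathrm{Ker}(D_{g_0}^*)=\{0\}$. That chain is what justifies the transfer — not "identifying $Q$ with the principal part of $d^2\Phi(g_0)$" plus a perturbation argument, which is not meaningful since the Hessian is the composition $Q^*\Pi_{m(g_0)}Q$ (minus a rank-one term), not $Q$ up to lower order. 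The paper instead reproves the real case directly: it pairs $Q(S)$ with $S$, splits $S=S_0+hg_0$, and applies the Weitzenböck bound \eqref{eq:Weit1} to get $\langle Q(S),S\rangle\geq \tfrac14(n-2)\|S_0\|^2+\tfrac n2(n-1)\|h\|^2$, which is what later feeds the quantitative stability estimate.

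For the complex hyperbolic case your premise $Q=\pi_{2*}\mathcal M\pi_2^*$ already fails: by \eqref{eq:QCH^n}, $Q$ maps $2$-tensors to $4$-tensors, so $\langle Q(S),S\rangle$ does not even make sense — the paper flags this as the main new difficulty. Its resolution is analytic, not representation-theoretic: writing $Q(S)=D_{g_0}\tilde p$, one first \emph{solves} for the trace-free part $p$ of $\tilde p$ degree by degree in the eigenspaces $E^\lambda_m$ of the vertical rotation $V$, using the ladder operators $\eta_\pm^\pm$ built from $X_\pm$ and $H_\pm=JX_\pm J^{-1}$ and the injectivity of $\eta_+^+$ on $E^\lambda_m$ for $\lambda>0$ (a consequence of the Pestov inequalities \eqref{eq:Pestov}, \eqref{eq:PestovH}); this yields $\pi_3^*p=-\tfrac18(X_+Jf+H_+Vf)$, reduces the problem to a $2$-tensor identity $\langle W(S),S\rangle=0$, and the contradiction comes from the coercive bound of Proposition \ref{propcoercive}, proved by combining the Pestov estimates with the complex Weitzenböck formula \eqref{eq:Weit2}. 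Your $K$-type/Schur multiplier strategy could conceivably be made to work, but you explicitly defer "ruling out accidental zeros of the multiplier on each solenoidal $K$-type," and there is no prior result to cite for this (the s-injectivity of the X-ray transform on $2$-tensors is a different operator and does not imply injectivity of $\Pi_{\mathrm{Ker}(D_{g_0}^*)}Q$). As written, the theorem's complex hyperbolic half remains unproved.
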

We conjecture that the operator $\Pi_{\mathrm{Ker}(D_{g_0}^*)} Q\Pi_{\mathrm{Ker}(D_{g_0}^*)}$ is also injective for quaternionic and octonionic hyperbolic metrics but are currently unable to prove it, see Conjecture \ref{conj}. This would give a local entropy rigidity result near any locally symmetric metric.
 We insist on the fact that this step seems to be purely geometrical and not to involve microlocal analysis.
\subsection{Local hyperbolic rank rigidity}
A Riemannian manifold $(M,g)$ has \emph{higher hyperbolic rank} if for any geodesic $\gamma$, there is a non-vanishing Jacobi field $J(t)$ along $\gamma$ such that $J(t)$ and $\dot \gamma(t)$ span a plane of curvature equal to $-1$. Higher hyperbolic rank on closed manifold is conjectured to only occur for locally symmetric spaces. We list thereafter some known results  (see \cite{CNS20} for a more detailed account of the existing literature):
\begin{itemize}
\item if  $(M,g)$ has \emph{higher hyperbolic rank} and its sectional curvature satisfies $K\leq -1$ then it is a locally symmetric space of rank $1$ \cite{Ham},
\item if  $(M,g)$ has \emph{higher hyperbolic rank} and its sectional curvature is 1/4-pinched (that is $-1\leq K\leq -1/4$) then it is a locally symmetric space of rank $1$ \cite[Theorem 1.1]{CNS20}. See also \cite[Corollary 1]{Cons} for similar results under different pinching conditions.
\end{itemize}
In \cite{CNS20}, the authors made the following conjecture.
\begin{conj}[Connell-Nguyen-Spatzier]
A closed negatively curved manifold $(M^n,g)$ with higher hyperbolic rank and curvature satisfying $K\geq -1$ is locally symmetric.
\end{conj}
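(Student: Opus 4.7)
The plan is to reduce the conjecture to the entropy rigidity framework developed in this paper by showing that higher hyperbolic rank together with the pinching $K\geq -1$ forces the equality $\mathrm{Ent}_{\mathrm{top}}(g)=\mathrm{Ent}_{\mathrm{Liou}}(g)$ (i.e.\ $\Phi(g)=0$), and then invoking entropy rigidity.

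First, I would bound $\mathrm{Ent}_{\mathrm{top}}(g)$ from above. By Manning's theorem, for a negatively curved metric $\mathrm{Ent}_{\mathrm{top}}(g)$ coincides with the volume entropy of the universal cover, and Bishop--Gromov comparison under $K\geq -1$ bounds that volume entropy by $n-1$. Next, I would bound $\mathrm{Ent}_{\mathrm{Liou}}(g)$ from below. The Anosov splitting yields Lyapunov exponents along $\mu_{\mathrm{Liou}}$-a.e.\ orbit; Rauch comparison combined with $K\geq -1$ forces every positive Lyapunov exponent to be $\leq 1$, while higher hyperbolic rank provides along each geodesic at least one perpendicular Jacobi field solving $J''=J$, hence a Lyapunov exponent exactly equal to $1$ in both $E_u$ and $E_s$. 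Using Pesin's formula $\mathrm{Ent}_{\mathrm{Liou}}(g)=\int_{SM}\sum_i\lambda_i^+\,d\mu_{\mathrm{Liou}}$, the aim is to propagate the single exponent equal to $1$ per orbit to all transverse directions $\mu_{\mathrm{Liou}}$-a.e., exploiting the flow-invariance and ergodicity of $\mu_{\mathrm{Liou}}$ together with the parallel-transport structure of the rank distribution; this would yield $\mathrm{Ent}_{\mathrm{Liou}}(g)\geq n-1$, hence $\mathrm{Ent}_{\mathrm{Liou}}(g)=\mathrm{Ent}_{\mathrm{top}}(g)=n-1$ and $\Phi(g)=0$.

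Once $\Phi(g)=0$, Katok's entropy rigidity conjecture would directly give that $g$ is locally symmetric. Absent the full conjecture, I would chain the entropy equality with (i) a Gromov-type rigidity result identifying the topological type of $M$ with that of a rank-one locally symmetric space (using for example the saturation case of Besson--Courtois--Gallot), and (ii) a Jacobi-field matching argument showing that $g$ lies in the $C^{3n/2+7}$-neighborhood of the corresponding hyperbolic model, so that Theorem \ref{newtheo} applies and concludes that $g$ is isometric, and thus locally symmetric.

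The main obstacle is the lower bound for $\mathrm{Ent}_{\mathrm{Liou}}$: upgrading the single Lyapunov exponent equal to $1$ per orbit, supplied by the hyperbolic rank hypothesis, to the statement that every positive Lyapunov exponent equals $1$ almost everywhere. This is a rank-rigidity statement for Lyapunov spectra that does not follow from the pointwise rank hypothesis in any automatic way, and any serious attack would need to exploit stable/unstable holonomy invariance of the rank distribution together with the equality case of Rauch comparison, a step that has so far resisted the known tools. Even granting $\Phi(g)=0$, the final rigidity step is not immediate without either Katok's full conjecture or a nontrivial reduction to the neighborhood regime of Theorem \ref{newtheo}.
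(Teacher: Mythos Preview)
The statement you are attempting to prove is presented in the paper as an \emph{open conjecture} (the Connell--Nguyen--Spatzier conjecture); the paper does not offer a proof of it. What the paper actually proves is the much weaker local statement near complex hyperbolic metrics, and it does so by combining \cite[Theorem 1.15]{CNS21} (which already establishes the entropy equality $\mathrm{Ent}_{\mathrm{top}}=\mathrm{Ent}_{\mathrm{Liou}}$ under the rank hypothesis in a $C^3$-neighborhood of a complex hyperbolic metric) with Theorem~\ref{newtheo}. There is no attempt in the paper to address the global conjecture.

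More importantly, your proposed strategy has a genuine and fatal gap, not merely a difficult step. You aim to show $\mathrm{Ent}_{\mathrm{Liou}}(g)\geq n-1$ by arguing that \emph{all} positive Lyapunov exponents equal $1$ $\mu_{\mathrm{Liou}}$-almost everywhere, and then squeeze against the Bishop--Gromov upper bound $\mathrm{Ent}_{\mathrm{top}}\leq n-1$. But this conclusion is \emph{false} for the very metrics the conjecture is meant to characterize. A complex hyperbolic metric normalized so that $K\in[-1,-\tfrac14]$ has higher hyperbolic rank (via the $Jv$ direction) and satisfies $K\geq -1$, yet its positive Lyapunov exponents are $1,\tfrac12,\ldots,\tfrac12$, so that $\mathrm{Ent}_{\mathrm{Liou}}=\mathrm{Ent}_{\mathrm{top}}=n/2$, which is strictly less than $n-1$ for $n\geq 4$. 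The quaternionic and octonionic models behave similarly. Hence the ``propagation of the exponent $1$ to all transverse directions'' that you flag as the main obstacle is not merely hard but impossible in general, and the sandwich $\mathrm{Ent}_{\mathrm{top}}\leq n-1\leq \mathrm{Ent}_{\mathrm{Liou}}$ cannot close. Any route to the conjecture via entropy equality must accommodate a genuinely mixed Lyapunov spectrum, which is exactly why the problem remains open; your fallback of reducing to the neighborhood regime of Theorem~\ref{newtheo} likewise has no known mechanism in the global setting.
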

In a subsequent paper \cite{CNS21}, they prove the following local results near locally symmetric metrics.
\begin{itemize}
\item Let  $(M,g_0)$ be either a closed quaternionic or octonionic hyperbolic locally symmetric manifold. Then there exists a neighborhood $\mathcal U$ of $g_0$ in the $C^3$-topology such that if $g\in \mathcal U$ has \emph{higher hyperbolic rank} with sectional curvature satisfying $K\geq -1$, then it is a locally symmetric space \cite[Theorem 1.14]{CNS21}.
\item Let  $(M,g_0)$ be a closed complex  hyperbolic locally symmetric manifold. Then there exists a neighborhood $\mathcal U$ of $g_0$ in the $C^3$-topology such that if $g\in \mathcal U$ has \emph{higher hyperbolic rank} with sectional curvature satisfying $K\geq -1$, then its Liouville entropy and topological entropy coincide \cite[Theorem 1.15]{CNS21}.  \end{itemize}
Their techniques do not allow them to prove the local rigidity of hyperbolic rank near complex hyperbolic metrics. However, combining their result with Theorem \ref{newtheo}, we obtain:

\begin{manualtheorem}{1}[Local rigidity of hyperbolic rank near complex hyperbolic metrics]
Let  $(M^n,g_0)$ be a closed complex  hyperbolic locally symmetric manifold. Then there exists a neighborhood $\mathcal U$ of $g_0$ in the $C^{3n/2+7}$-topology such that if $g\in \mathcal U$ has \emph{higher hyperbolic rank} with sectional curvature satisfying $K\geq -1$, then $g$ is locally symmetric.
\end{manualtheorem}
This is an analogue of  \cite[Theorem 1.14]{CNS21} for complex hyperbolic metrics. Note however that with our techniques, the neighborhood is taken in the $C^{3n/2+7}$-topology while their results hold for $C^3$ metrics.
\subsection{Local rigidity for metric with $C^2$ stable and unstable foliations.}
The regularity of the Anosov splitting associated to the geodesic flow on a negatively curved manifold is a property that characterizes locally-symmetric metrics along all negatively curved metrics. More precisely:
\begin{itemize}
    \item Hurder and Katok \cite{HuKa} showed that for negatively curved surfaces, if the Anosov splitting is $C^2$, then it is $C^{\infty}$.
    \item Kanai \cite{Kan} showed that in dimension $n\geq 3$ and for $4/9$-pinched negatively curved metrics, smoothness of the Anosov splitting implied that $g$ is isometric to a real hyperbolic metric. See also Feres and Katok \cite{FeKa1,FeKa2} for the same result with no pinching condition in dimension $3$ and the optimal pinching in dimension $4$.
    \item Benoist, Foulon and Labourie \cite{BFL} showed that in any dimension $n$, there exists $k(n)\in \mathbb N$ such that if a negatively curved metric has $C^k$ foliations, then it is locally symmetric. 
\end{itemize}
The value $k$ in the previous theorem grows with the dimension $n$ and one can take $k(n)=2(2n^2-n+1)$. However, as noticed by Hamenstädt in \cite[Theorem B]{Ham2}, for a negatively curved metric, if the Anosov splitting is $C^2$, then the Liouville measure is equal to the measure of maximal entropy. From Theorem \ref{newtheo}, we thus obtain:
\begin{theo}[Local rigidity of metric with $C^2$ foliation near real and complex hyperbolic metrics]
Let  $(M^n,g_0)$ be a closed real or complex  hyperbolic locally symmetric manifold. Then there exists a neighborhood $\mathcal U$ of $g_0$ in the $C^{3n/2+7}$-topology such that if $g\in \mathcal U$ has $C^2$ Anosov foliation, then $g$ is locally symmetric.
    
\end{theo}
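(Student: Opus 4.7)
The proof is a direct concatenation of Theorem \ref{newtheo} with a theorem of Hamenst\"adt. The plan is to take the neighborhood $\mathcal U$ to be the one provided by Theorem \ref{newtheo}, possibly shrunk so that every $g\in \mathcal U$ is still of negative sectional curvature (this is automatic for $g$ sufficiently $C^2$-close to $g_0$, hence in particular for $g$ close in $C^{3n/2+7}$). There is no new geometric content in the argument beyond what is already contained in Theorem \ref{newtheo}; the role of this theorem is simply to translate a $C^2$-regularity assumption on the stable/unstable foliations into the entropy equality hypothesis needed to apply Theorem \ref{newtheo}.

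More precisely, given $g\in \mathcal U$ whose Anosov foliation is $C^2$, I would first invoke Hamenst\"adt's result (\cite[Theorem B]{Ham2}) which asserts that for a closed negatively curved manifold, $C^2$-regularity of the weak stable/unstable foliations of the geodesic flow forces the Liouville measure $\mu_{\mathrm{Liou}}^g$ to coincide with the measure of maximal entropy $\mu_{\mathrm{BM}}^g$. Combined with the variational principle \eqref{eq:varia} and the fact that $\mu_{\mathrm{BM}}^g$ is the unique equilibrium state, this yields
\[
\mathrm{Ent}_{\mathrm{Liou}}(g)=\mathrm{Ent}(\varphi_1^g,\mu_{\mathrm{Liou}}^g)=\mathrm{Ent}(\varphi_1^g,\mu_{\mathrm{BM}}^g)=\mathrm{Ent}_{\mathrm{top}}(g).
\]

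The second step is then to apply Theorem \ref{newtheo} to $g$: since $g$ lies in a $C^{3n/2+7}$-neighborhood of $g_0$ and satisfies $\mathrm{Ent}_{\mathrm{Liou}}(g)=\mathrm{Ent}_{\mathrm{top}}(g)$, we conclude that $g$ is homothetic to $g_0$. A homothetic rescaling of a locally symmetric metric is locally symmetric, so $g$ is locally symmetric, completing the proof.

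The main (and essentially only) step that requires any care is verifying compatibility of the hypotheses: one must ensure that the $C^{3n/2+7}$-neighborhood of $g_0$ in which the $C^2$-foliation assumption is posed can be taken to be contained in the neighborhood provided by Theorem \ref{newtheo} and consists entirely of negatively curved metrics; both are open conditions in a topology weaker than $C^{3n/2+7}$, so this is immediate. There is no genuine obstacle, which is why the result is stated as a corollary of Theorem \ref{newtheo}.
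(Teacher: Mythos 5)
Your proposal is correct and is exactly the paper's argument: the result is obtained by combining Hamenst\"adt's theorem (that a $C^2$ Anosov splitting for a negatively curved metric forces the Liouville measure to equal the measure of maximal entropy, hence $\mathrm{Ent}_{\mathrm{Liou}}(g)=\mathrm{Ent}_{\mathrm{top}}(g)$) with Theorem \ref{newtheo}. The paper treats this as an immediate corollary with no further proof, just as you do.
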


\subsection{Strategy of the proof.}
The first part of Flaminio's argument consists in using the geometry of real hyperbolic spaces to first simplify the expression of the Hessian $d^2\Phi(g_0)$. We start by generalizing the computation to all locally symmetric metrics (see Theorem \ref{eq:theoHess}). The computation relies on the geometrical characterization of locally symmetric spaces recalled in Subsection \ref{locsym}.

The main difference with Flaminio's strategy is that we rewrite the Hessian using the "generalized X-ray transform" $\Pi$. The basic properties of the operator $\Pi$ are recalled in Subsection \ref{Xray}.

Note that the ellipticity and injectivity of $\Pi$ on suitable spaces of symmetric tensors was the core of the arguments of \cite{GuLef,GuKnLef}.
Our proof share similarities with the ones of \cite{GuLef,GuKnLef}. In particular, the stability estimate \eqref{eq:stability} is deduced from a Taylor expansion once one shows the Hessian $d^2\Phi(g_0)$ is elliptic and injective when restricted to solenoidal tensors, which we refer to as the \emph{infinitesimal problem.} The ellipticity of the Hessian is deduced from the works of Guillarmou and Guillarmou-Lefeuvre \cite[Theorem 3.5, Lemma 4.3]{Gu,GuLef}) and an explicit computation of the principal symbol of the differential operator $Q$ appearing in Theorem \ref{eq:theoHess}, see Proposition \ref{elliptic}.

The infinitesimal problem for real hyperbolic metrics was solved by Flaminio using a Weitzenböck formula. In Section \ref{sinj}, we show Theorem \ref{soltheo}, that is, we solve the infinitesimal problem for real and complex hyperbolic metrics.

The proof of Theorem \ref{soltheo} for complex hyperbolic metrics is much more involved than for real hyperbolic metrics. The technical reason is that the differential operator $Q$ appearing in the Hessian computation takes values in symmetric $4$-tensors for the complex hyperbolic case while it takes values in symmetric $2$-tensors for the real hyperbolic one. Note that the infinitesimal problem corresponding to the marked length spectrum rigidity conjecture studied in \cite{GuLef,GuKnLef} was solved by Croke and Sharafutdinov \cite{CrSh} using ideas from tensor tomography and inverse problems. We will follow this approach to prove Theorem \ref{soltheo}.

More precisely, we will use a correspondance between symmetric tensors and spherical harmonics recalled in Subsection \ref{SM}. The complex structure allows to further decompose the spaces of spherical harmonics. We develop in Subsection \ref{CH} a theory of raising/lowering operators for general complex hyperbolic spaces very similar to the work of Guillemin–Kazhdan on surfaces \cite{GK} which might be of independent interest. In Section \ref{sinj}, we use this theory as well as a Weitzenböck formula to complete the proof of Theorem \ref{soltheo}.

\textbf{Acknowledgements.} The author would like to first thank Colin Guillarmou and Thibault Lefeuvre for their careful and precious guidance during the writing of this paper.

The author would also like to thank Livio Flaminio and François Ledrappier for interesting discussions on Katok's conjecture. He would like to further thank Livio Flaminio for pointing out that our main theorem could be applied to solve the local rigidity of higher hyperbolic rank near complex hyperbolic metrics.

The author would like to thank Andrei Moroianu for answering some questions on complex and quaternionic geometry.

Finally, the author would like to thank the anonymous referee for their careful and thorough reading of the paper.

This project has received funding from the European Research Council (ERC) under the European Union’s Horizon research and innovation programme (grant agreement No. 101162990).
\section{Preliminaries}
We recall several results we will need to prove Theorem \ref{MainTheo}.
\label{preli}
\subsection{Symmetric tensors and analysis on $SM$.}
\label{SM}
In this subsection, we recall some notions and tools from tensor tomography. We refer to \cite[Chapter 5.3]{GuMaz} for a textbook account of these notions.
\subsubsection{Tensorial analysis}
We fix a Euclidean space $V$ and  identify $V$ with its dual $V^*$ using its inner product. Let $m$ be an integer. For an $m$-tensor $f\in V^{\otimes m}$, we say that $f$ is a symmetric if for any permutation  $\sigma\in \mathfrak S_m$,
$$ \forall (v_1,\ldots,v_m)\in V^m,\,\, f(v_{\sigma(1)},\ldots,v_{\sigma(m)})=f(v_1,\ldots, v_m). $$
The set of symmetric $m$-tensors will be denoted by $S^mV$. The \emph{symmetrisation} of an $m$-tensor is defined to be its (orthogonal) projection on $S^mV$, it is defined by 
\begin{equation}
\label{eq:S}
\mathrm{Sym}: V^{\otimes m}\to S^mV,\quad e_{i_1}\otimes e_{i_2}\otimes \cdots \otimes e_{i_m}\mapsto \frac{1}{m!}\sum_{ \sigma\in \mathfrak S_m}e_{i_{\sigma(1)}}\otimes e_{i_{\sigma(2)}}\otimes \cdots \otimes e_{i_{\sigma(m)}}
\end{equation}
where $(e_1,\ldots ,e_n)$ denotes a basis of $V$ and the resulting tensor is extended by linearity.

An important operator is given by the \emph{trace}:
\begin{equation}
\label{eq:trace}
\tr: S^mV\to S^{m-2}V,\quad f\mapsto \sum_{i=1}^mf(e_i,e_i,\cdot,\ldots,\cdot), 
\end{equation}
where $(e_i)_{i=1}^n$ is an orthonormal basis of $V$. The scalar product on $V$ extends naturally to the space of symmetric tensors:
$$\forall g,h \in S^m V, \ \langle h,g\rangle_{S^mV}:=\frac{1}{m!}\sum_{i_1,\ldots, i_m}h(e_{i_1},\ldots,e_{i_m})g(e_{i_1},\ldots,e_{i_m}). $$
The adjoint $L$ of the trace  for this scalar product  is then is given by:
\begin{equation}
\label{eq:adjointt}
L: S^mV\to S^{m+2}V,\ L(h):=\mathrm{Sym}(h\otimes g_0),
\end{equation}
where $g_0$ is the symmetric $2$-tensor given by the metric : $g_0=\sum_{i=1}^n e_i\otimes e_i$ for any orthonormal basis $(e_i)_{i=1}^n$. The operator $L$ is easily seen to be injective. In particular, if we denote by $S_0^mV=\mathrm{Ker}(\tr)\cap S^mV$ the trace-free symmetric $m$-tensors, then
\begin{equation}
\label{eq:tensordecomp}
S^mV=\bigoplus_{k=0}^{\lfloor m/2\rfloor}L^k\big(S_0^{m-2k}V\big).
\end{equation}
We now specialize to the case $V=T_xM$, more precisely, an $m$-tensor on $M$ will denote a smooth section of the bundle $C^{\infty}(M;S^mT^*M)$. The previous operators extend naturally and so does the decomposition \eqref{eq:tensordecomp}.
\subsubsection{Analysis on $SM$}
There is a corresponding decomposition to \eqref{eq:tensordecomp} in terms of {spherical harmonics}. This will be sometimes more convenient for the computations. 

We consider the unit tangent bundle $SM:=\{(x,v)\in TM\mid g(v,v)=1\}$. We will write $\varphi_t$ for the geodesic flow. The geodesic vector field is then the generator of the flow, i.e., $X=\tfrac{d}{dt}\varphi_t|_{t=0}.$
We denote by $\pi:SM\to M$ the canonical projection on the base. The vertical distribution is defined to be 
$$
 \mathbb V:=\mathrm{Ker}(d\pi).
$$
The horizontal distribution is defined using parallel transport
$$
{\mathbb H}(x,v):=\left\{\frac{d}{dt}\alpha(t)|_{t=0}\mid \forall W\in T_{x,v}SM,\alpha(t)=\big(\pi(\varphi_t(x,v)),\mathcal P_t W\big),  g_{\text{Sas}}(X,W)=0\right\}
$$
where $\mathcal P_t$ denotes the parallel transport along the geodesic. We recall that the metric $g$ on the base induces naturally a metric $g_{\text{Sas}}$ (called the Sasaki metric, see \cite[Chapter 1]{Pat}) on the unit tangent bundle $SM$. We have the following orthogonal splitting: 
$$
T(SM)=\mathbb R X\oplus {\mathbb H}\oplus \mathbb V.
$$

The Levi-Civita connection decomposes as $\nabla_{\text{Sas}}f=(Xf)X+\widetilde{\nabla_{\mathbb H}}f+\widetilde{\nabla_{\mathbb V}}f$, where $\widetilde{\nabla_{\mathbb H}}$ is the \emph{horizontal gradient} and $\widetilde{\nabla_{\mathbb V}}$ is the \emph{vertical gradient}. It is actually convenient to identify $\mathbb H$ and $\mathbb V$ with the same vector bundle $\mathcal N$ over $SM$ called the \emph{normal bundle} so that the two gradients act on the same space.  Define the normal bundle:
$$
\mathcal N(x,v):=\{w\in T_xM \mid g_{x}\big(w,d\pi(X(x,v))\big)=0\}.
$$
We then see that the horizontal bundle is isometric to the normal one and that
$$
d\pi_{(x,v)}: \mathbb H(x,v)\overset{\sim}{\longrightarrow} \mathcal N(x,v).
$$
Similarly, define the mapping $\mathcal K$ to be 
$$\mathcal K: \mathbb V \to \mathcal N,\quad W\mapsto \frac{d}{dt}|_{t=0}(\pi(\varphi_t(x,v)), \mathcal P_tW). $$
Then $\mathcal K$ is an isometry and we have
$$
\mathcal K(x,v): \mathbb V(x,v)\overset{\sim}{\longrightarrow} \mathcal N(x,v).
$$
We combine these two isometries to identify
$$
\mathbb H\oplus \mathbb V\overset{\sim}{\longrightarrow} \mathcal N\oplus \mathcal N,\quad (w,v)\mapsto (d\pi(w), \mathcal K(v)).
$$
We make the gradients act on the normal bundle using the isometries
$$
\nabla_{\mathbb H}:=d\pi \widetilde{\nabla_{\mathbb H}},\ \nabla_{\mathbb V}:=\mathcal K \widetilde{\nabla_{\mathbb V}}: \,  C^{\infty}(SM)\to C^{\infty}(SM,\mathcal N).
$$
The $L^2$-adjoint (where the $L^2$ space is taken with respect to the Liouville measure) for these maps are denoted by
$$
\nabla_{\mathbb H}^*,\nabla_{\mathbb V}^*:C^{\infty}(SM,\mathcal N)\to C^{\infty}(SM).
$$
The vertical Laplacian is then defined to be
$$
\Delta_{\mathbb V}:=\nabla_{\mathbb V}^*\nabla_{\mathbb V}: C^{\infty}(SM)\to C^{\infty}(SM).
$$
The vertical Laplacian coincides with the round Laplacian in each fiber of $SM$. 
$$\Delta_{\mathbb V}f(v)=\Delta_{S_xM}(f|_{S_xM}), \ \forall v\in T_xM. $$
We thus define the fiber bundle 
$$\Omega_m\longrightarrow M,\quad \Omega_m(x)=\mathrm{Ker}\big(\Delta_{S_xM}-m(n+m-2)\big), \ m\geq 0. $$
Each smooth function $f\in C^{\infty}(SM)$ decomposes uniquely into a sum of spherical harmonics
$$
f=\sum_{m=0}^{+\infty}f_m, \quad  f_m\in \Omega_m \iff \Delta_{\mathbb V}f_m=m(n+m-2)f.
$$
 The link with symmetric tensors is given by the pullback map: 
\begin{equation}
\label{eq:pi_k^*}
\pi_m^*:C^{\infty}(M,S^mTM)\to C^{\infty}(SM)
, \, f\mapsto \big( (x,v)\mapsto f(x)(v,\ldots,v)\big).
\end{equation}
It defines an isomorphism from the trace-free tensors of degree $m$ to the spherical harmonics of degree $m$:
\begin{equation}
\label{eq:iso}
\pi_m^*: C^{\infty}(M,S_0^mT^*M)\overset{\sim}{\longrightarrow} \Omega_m.
\end{equation}

The pull-back operator $\pi_m^*$ is conformal with respect to the natural scalar products of $C^{\infty}(M,S_0^mT^*M)$ and $\Omega_m$:
\begin{equation}
\label{eq:conformal}
\forall S,T\in C^{\infty}(M,S_0^mT^*M), \ \langle S,T\rangle_{L^2(S^2T^*M)}=\Lambda_m^n\langle \pi_m^*S,\pi_m^*T\rangle_{L^2(SM)},
\end{equation}
where we have 
$$\Lambda_m^n=\frac{\Gamma(n/2+m)}{2^{1-m}m!\pi^{n/2}}. $$
See \cite[Lemma 5.7]{GuMaz} for a proof. We see that the conformal factor $\Lambda_m^n$ depends on the degree $m$ and this will be a central point in the computations of Section \ref{sectionsinj}. In the following, we might suppress the index and write $\langle \cdot,\cdot\rangle$ for the scalar product if it is clear from the context which scalar product is used.

We will need a last correspondence between the spherical harmonics and symmetric tensors. One can define the total horizontal gradient $\nabla^{\mathrm{tot}}_{\mathbb H}$ by projecting the gradient on the total horizontal space $\mathbb H_{\mathrm{tot}}:=\mathbb H\oplus \mathbb R X$. One can then consider its adjoint $(\nabla^{\mathrm{tot}}_{\mathbb H})^*$ and the total horizontal Laplacian $\Delta_{\mathbb H}^{\mathrm{tot}}:=(\nabla^{\mathrm{tot}}_{\mathbb H})^*\nabla^{\mathrm{tot}}_{\mathbb H}$. For a function $f\in C^{\infty}(SM)$, we extend it to $C^{\infty}(TM\setminus\{0\})$ by homogeneity, we will still denote the extension by~$f$.
\begin{lemm}
\label{lemm deltaH}
One has the following relation
\begin{equation}
\label{eq:deltaH}
\forall S\in C^{\infty}(M;S^2T^*M), \ \pi_2^*(\nabla^*\nabla S)=\Delta_{\mathbb H}^{\mathrm{tot}}\pi_2^*S,
\end{equation}
where $\nabla^*$ is the $L^2$-adjoint of $\nabla$.
\end{lemm}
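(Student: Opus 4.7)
The plan is to carry out a pointwise computation at an arbitrary point $(x_0, v_0) \in SM$, using geodesic normal coordinates $(x^1, \ldots, x^n)$ on $M$ centered at $x_0$. The central ingredient will be the following intertwining relation: for any vector field $W$ on $M$ with horizontal lift $\widetilde W$ to $SM$, and any symmetric $2$-tensor $S$,
\[
\widetilde W(\pi_2^* S)(x, v) = (\nabla_W S)_x(v,v) = \pi_2^*(\nabla_W S)(x,v).
\]
I would derive this from the characterization of horizontal curves via parallel transport: taking any curve $\alpha$ with $\alpha(0) = x$ and $\alpha'(0) = W(x)$, the curve $t \mapsto (\alpha(t), P_t v)$ is horizontal in $SM$ with initial tangent $\widetilde W(x,v)$, and differentiating $S_{\alpha(t)}(P_t v, P_tv)$ at $t = 0$ produces $(\nabla_W S)_x(v,v)$ because $P_t v$ is parallel along $\alpha$.

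Setting $H_i = \widetilde{\partial_{x^i}}$ in the normal chart and applying the intertwining relation twice, I would obtain, at $x_0$,
\[
H_j H_i(\pi_2^* S)(x_0, v_0) = \nabla_{\partial_j}(\nabla_{\partial_i} S)(x_0)(v_0, v_0) = (\nabla^2 S)(\partial_j, \partial_i)(x_0)(v_0, v_0),
\]
where the last equality uses $\nabla_{\partial_j}\partial_i|_{x_0} = 0$ in normal coordinates. Summing the diagonal terms then gives
\[
\sum_i H_i^2(\pi_2^* S)(x_0, v_0) = -\pi_2^*(\nabla^*\nabla S)(x_0, v_0),
\]
since the connection Laplacian equals $-\sum_i (\nabla^2)(e_i, e_i, \cdot, \cdot)$ in any orthonormal frame parallel at the base point.

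It remains to identify the left-hand side with $\Delta_{\mathbb H}^{\mathrm{tot}}(\pi_2^* S)(x_0, v_0)$. I would first check that each $H_i$ is divergence-free with respect to $\mu_{\mathrm{Liou}}$: in the trivialization $H_i = \partial_{x^i} - \Gamma^k_{ij}(x) v^j \partial_{v^k}$, and the classical identity $\partial_i \sqrt g = \sqrt g\, \Gamma^k_{ki}$ yields $\mathrm{div}_{\mu_{\mathrm{Liou}}}(H_i) = 0$. Expanding $\nabla_{\mathbb H}^{\mathrm{tot}} f = \sum_{i,j} g^{ij}(H_i f)\, e_j$ in the pullback of the coordinate frame of $TM$ and integrating by parts, I get
\[
(\nabla_{\mathbb H}^{\mathrm{tot}})^*\Big(\sum_j F^j e_j\Big) = -\sum_j H_j F^j, \qquad \Delta_{\mathbb H}^{\mathrm{tot}} f = -\sum_{i,j}\bigl[(H_j g^{ij})(H_i f) + g^{ij} H_j H_i f\bigr].
\]
In normal coordinates $g^{ij}(x_0) = \delta^{ij}$ and $H_k g^{ij}(x_0) = 0$, reducing this to $-\sum_i H_i^2 f(x_0, v_0)$; combined with the previous step, this gives the equality at $(x_0, v_0)$, and since that point is arbitrary, \eqref{eq:deltaH} holds globally. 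The delicate step is the adjoint computation on the pullback bundle $\pi^* TM$: the $g^{ij}$ factors converting between the Sasaki picture and the coordinate frame could a priori contribute curvature-type correction terms, but working in normal coordinates makes them vanish at the base point and keeps the identity clean.
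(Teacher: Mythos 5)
Your proposal is correct and follows essentially the same route as the paper: both rest on the intertwining relation $\widetilde W(\pi_2^*S)=\pi_2^*(\nabla_W S)$ obtained from parallel transport along horizontal curves, evaluated in a frame adapted at the base point so that $\nabla^*\nabla$ reduces to $-\sum_i H_iH_i$ there. One small inaccuracy: the coordinate horizontal lifts are not divergence-free in general (one has $\mathrm{div}_{\mu_{\mathrm{Liou}}}(H_i)=\Gamma^k_{ki}\circ\pi$, the divergence of $\partial_{x^i}$ on the base), so your adjoint formula acquires an extra zeroth-order term; since $\Gamma^k_{ki}(x_0)=0$ in normal coordinates this correction vanishes at $(x_0,v_0)$ and the identification $\Delta_{\mathbb H}^{\mathrm{tot}}f(x_0,v_0)=-\sum_i H_i^2f(x_0,v_0)$, hence the lemma, still holds.
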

\begin{proof}
Consider a local frame $(e_1,e_2,\ldots, e_n)$ parallel at $x_0$ (i.e., $\nabla_{e_i}e_j=0$ at $x_0$).
Recall that the action of $\nabla^*$ on a $m$-tensor $T$ can be described locally by:
\begin{equation}
\label{eq:star}
\nabla^*T=-\tr(\nabla T)=-\sum_{i=1}^n\nabla_{e_i}T(e_i,\cdot, \ldots, \cdot). 
\end{equation}
In the following, we will use the following notation:
$$\forall T\in C^{\infty}(M,S^mT^*M), \ \forall X,Y\in C^{\infty}(M,TM), \ \nabla^2_{X,Y}T:=\nabla_X\nabla_Y T-\nabla_{\nabla_XY}T. $$
Then we compute
\begin{align*}
\pi_2^*(\nabla^* \nabla S)(x,v)&=-\sum_{j=1}^n \nabla^2_{e_j,e_j}S_x(v,v)
\\&=-\sum_{j=1}^ne_j(\nabla_{e_j}S_x(v, v))+2\sum_{j=1}^n\nabla_{e_j}S_x(\nabla_{e_j}v,v).
\end{align*}
Consider now $X_i(x,v):=d\pi_{(x,v)}^{-1}(e_i(x))$ the horizontal lifts of $e_i$. We first check that
\begin{align*}&\nabla_{e_j}v=\nabla_ve_j+[v,e_j]=\sum_{k=1}^n v_k \nabla_{e_k}e_j+[d\pi(x,v)X(x,v),d\pi(x,v)X_j(x,v)],
\\&\nabla_{e_j}v=d\pi(x,v)[X(x,v),X_j(x,v)]=d\pi(x,v)(R(e_i,v)v)^{\mathbb V}=0,
\end{align*}
where $(R(e_i,v)v)^{\mathbb V}\in \mathbb V$ is the vertical component of $R(e_i,v)v$ (in other words, one has $\mathcal K(x,v)(R(e_i,v)v)^{\mathbb V}=R(e_i,v)v$) and the computation of $[X,X_j]$ can be found in the proof of \cite[Lemma 13.1.7]{Lef}. This means that the expression above simplifies into
\begin{align*}
\pi_2^*(\nabla^* \nabla S)(x,v)=-\sum_{j=1}^ne_j(e_j(\pi_2^*S(x,v))).
\end{align*}
Next, consider a horizontal vector $W\in T_{(x_0,v_0)}SM$ and choose a path $(x(t),v(t))$ in $SM$ such that $(x(0),v(0))=(x_0,v_0)$ and $(\dot x(0),\dot v(0))=W$. We define
 $$S: M\to \mathrm{End}(T_xM),\ \ S_x(v,w)=g_x(S(x)v,w), \ \forall w,v\in T_xM.$$ 
 We now study the action of $W$ on $\pi_2^*S:$
 \begin{align*}
 W\pi_2^*S(x,v)=\frac{d}{dt}|_{t=0}g_x(S(x)v,v)=g_x\big(\nabla_{\dot x}(S(x)v)|_{t=0},v_0\big)+g_x(S(x_0)v_0,\nabla_{\dot x}v|_{t=0}).
 \end{align*}
First, the fact that $W$ is horizontal is equivalent to $\mathcal K_{(x_0,v_0)}x:=\nabla_{\dot x}v|_{t=0}=0.$ Secondly, the Leibniz rule gives 
$$\nabla_{\dot x}(S(x)v)|_{t=0}=S(x_0)\nabla_{\dot x}v+\dot x (S(x))|_{t=0}v_0=\dot x (S(x_0))v_0, $$
where $\dot x (S(x_0))$ denotes the matrix obtained by applying $\dot x$ on each entry. Finally,
$$W \pi_2^*S(x_0,v_0)=g_x((d\pi(x_0,v_0)W)S(x)v,v)=(d\pi(x_0,v_0)W)\pi_2^*S(x_0,v_0).$$
In particular, we obtain 
$$-\sum_{j=1}^ne_j(e_j(S_x(v,v)))=-\sum_{j=1}^nX_jX_j\pi_2^*S. $$
This is exactly \eqref{eq:deltaH} as we wanted to show.
\end{proof}
\subsubsection{Symmetrized covariant derivative and geodesic vector field.}
\label{X+X-}
The vector field $X$ acts as a Lie derivative on $C^{\infty}(SM)$ and moreover, for any $k\geq 0$:
$$X:C^{\infty}(M,\Omega_k)\to C^{\infty}(M,\Omega_{k-1})\oplus C^{\infty}(M,\Omega_{k+1}). $$
We will write $X=X_++X_-$, where $X_{\pm}:C^{\infty}(M,\Omega_k)\to C^{\infty}(M,\Omega_{k\pm1})$, see \cite[Lemma 14.2.1]{Lef}.

The Levi-Civita connection acts naturally on $m$-tensors but it does not preserve the symmetry. We thus introduce the symmetrized covariant derivative: 
$$D_{g_0}:=\mathrm{Sym}\circ \nabla_{g_0}: C^{\infty}(M;S^mT^*M)\to C^{\infty}(M;S^{m+1}T^*M).$$
Its formal adjoint is the divergence operator $D_{g_0}^*$:
$$D_{g_0}^*=-\tr \circ \nabla_{g_0}: C^{\infty}(M;S^mT^*M)\to C^{\infty}(M;S^{m-1}T^*M). $$
We have the important relation, see \cite[Lemma 14.1.8]{Lef}:
\begin{equation}
\label{eq:XandD}
X\pi_m^*S=\pi_{m+1}^*D_{g_0}S, \ \forall S\in C^{\infty}(M;S^mT^*M).
\end{equation}
\subsubsection{Pestov's identity}
We introduce an "energy estimate" due to Pestov (and first written in coordinate-free way in \cite[Appendix Theorem 1.1]{Kni}, see also \cite[Corollary 3.6]{PSU}). It will be crucial in the proof of solenoidal injectivity of Proposition \ref{sinj}. More precisely, we will need the following estimate which is implied in negative curvature by the localized Pestov identity. We will use the result of \cite[Theorem 14.3.4, Equation (16.1.3)]{Lef}, which is valid in dimension $n\geq 4$:
\begin{equation}
\label{eq:Pestov}
\forall n\geq 4, \quad\forall u\in \Omega_m,\ \|X_-u\|_{L^2(SM)}^2\leq \|X_+u\|_{L^2(SM)}^2.
\end{equation}
A consequence is that the operator $X_+$ is injective on $\Omega_m $ for $m\geq 1$ (\cite[Lemma 16.1.6]{Lef}):
\begin{equation}
\label{eq:X_+inj}
\forall u\in \Omega_m, \ m\geq 1, \ X_+u=0\Rightarrow u=0.
\end{equation}
Finally, let $S \in C^{\infty}(M;S^2T^*M)$ be a divergence-free tensor, i.e., $D_{g_0}^*S=0$ and write $S=S_0+L(h)$ with $S_0\in C^{\infty}(M;S_0^2T^*M)$ and $h\in C^{\infty}(M)$. Then 
$$D_{g_0}^*S=0\iff D_{g_0}^*(S_0+L(h))=0\iff D_{g_0}^*S_0-D_{g_0}h=0, $$
where we have used the relation $D_{g_0}^*L(h)=D_{g_0}h$ on $0$-tensors. The divergence operator corresponds to a rescaled version of $X_-$. Indeed, using \eqref{eq:XandD} and \eqref{eq:conformal}, one has, for any $S_0\in C^{\infty}(M;S^2_0T^*M)$ and $p\in C^{\infty}(M;T^*M)$,
$$\frac{\Gamma(n/2+1)}{\pi^{n/2}}\langle \pi_1^*(D_{g_0}^*S),\pi_1^*p\rangle=\langle D_{g_0}^*S_0,p\rangle=\langle S,D_{g_0}p\rangle=\frac{\Gamma(n/2+2)}{\pi^{n/2}}\langle \pi_2^*S,X_+\pi_1^*p\rangle. $$
In particular, this gives 
\begin{equation}
\label{eq:div}
 \pi_1^*(D_{g_0}^*S_0)=-\tfrac{n+2}2X_-\pi_2^*S_0.
 \end{equation}
  This means that
\begin{equation}
\label{eq:divfree}
D_{g_0}^*S=0\iff \tfrac{n+2}2X_-\pi_2^*S_0=-X_+h.
\end{equation}

\subsubsection{Decomposition of the space of symmetric tensors.}
The proof of Theorem \ref{MainTheo} follows from a Taylor expansion of the functional $\Phi$ near a locally symmetric metric $g_0$. We note that there exists a natural gauge given by the action of the group $\mathrm{Diff}_0(M)$ of diffeomorphisms isotopic to the identity. We define
$$\mathcal O(g):=\{\phi^*g\mid \phi \in \mathrm{Diff}_0(M)\},\ T_{g_0}\mathcal O(g_0)=\{\mathcal L_Vg_0\mid V\in C^{\infty}(M;TM)\}. $$
In particular, the functional $\Phi$ is constant along any orbit $\mathcal O(g)$ and the kernel of the Hessian $d^2\Phi_{g_0}$ always contains the tangent space $T_{g_0}\mathcal O(g_0)$. We will hence need to prove an injectivity result of the Hessian on a "transverse slice" to $T_{g_0}\mathcal O(g_0)$. We remark the following fact:
$$T_{g_0}\mathcal O(g_0)=\{D_{g_0}p\mid p\in C^{\infty}(M;TM)\}, $$
in particular, a natural transverse slice is provided by the kernel of the adjoint $D_{g_0}^*$, \cite[Theorem 14.1.10]{Lef}.
\begin{lemm}
\label{lemmD*}
For any $S\in C^{\infty}(M;S^mT^*M)$, there exists a unique pair 
$$(p,h)\in C^{\infty}(M;S^{m-1}T^*M)\times \big(C^{\infty}(M;S^mT^*M)\cap \mathrm{Ker}(D_{g_0}^*)\big)$$
such that $S=D_{g_0}p+h$.
\end{lemm}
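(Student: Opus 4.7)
The plan is to realize this as a Hodge-type splitting driven by the overdetermined-elliptic operator $D_{g_0}$. The principal symbol of $D_{g_0}$ at a nonzero covector $\xi$ sends $p \in S^{m-1}T^*M$ to $\mathrm{Sym}(\xi \otimes p)$, which is injective; therefore $P := D_{g_0}^{*} D_{g_0}$ is a self-adjoint, non-negative, second-order elliptic operator on $C^{\infty}(M;S^{m-1}T^*M)$. Since $M$ is closed, standard elliptic theory yields a discrete spectrum, a finite-dimensional smooth kernel $\ker P$, and a closed $L^2$-range equal to $(\ker P)^{\perp}$. The identity $\langle Pp,p\rangle = \|D_{g_0}p\|^{2}$ also gives $\ker P = \ker D_{g_0}$.

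Given $S \in C^{\infty}(M;S^m T^*M)$, the next step is to produce the potential $p$ by solving $Pp = D_{g_0}^{*} S$ in $C^{\infty}(M;S^{m-1}T^*M)$. The solvability condition $D_{g_0}^{*}S \perp \ker D_{g_0}$ is automatic, since for any $q \in \ker D_{g_0}$,
\[
\langle D_{g_0}^{*}S, q\rangle_{L^{2}} = \langle S, D_{g_0}q\rangle_{L^{2}} = 0.
\]
Elliptic regularity then supplies a smooth solution $p$, and setting $h := S - D_{g_0}p$ yields $D_{g_0}^{*} h = D_{g_0}^{*} S - Pp = 0$, so $h$ belongs to the required kernel. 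This establishes existence.

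For uniqueness, if $(p,h)$ and $(p',h')$ both decompose $S$, then $D_{g_0}(p-p') = h'-h$, and pairing this identity with $h'-h$ simultaneously produces $\|h'-h\|^{2}$ and $\langle p-p', D_{g_0}^{*}(h'-h)\rangle = 0$. Hence $h = h'$ and $p - p' \in \ker D_{g_0}$. Promoting this to uniqueness of $p$ itself requires $\ker D_{g_0} = 0$; in negative curvature this holds for tensors of positive degree by the Pestov-based injectivity \eqref{eq:X_+inj}. More precisely, decomposing $p$ via \eqref{eq:tensordecomp} into its trace-free components and translating $D_{g_0}p = 0$ through \eqref{eq:XandD} into a statement about $X_{+}$ acting on each spherical harmonic layer forces each component, hence $p$ itself, to vanish. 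I expect this last geometric input to be the only nontrivial step; the remainder is a routine application of Hodge theory for overdetermined-elliptic operators on closed manifolds.
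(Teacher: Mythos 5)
Your existence argument and the uniqueness of $h$ (equivalently, of $D_{g_0}p$) are correct and follow the standard route: the principal symbol $i\,\mathrm{Sym}(\xi^{\sharp}\otimes\cdot)$ of $D_{g_0}$ is injective for $\xi\neq 0$, so $P=D_{g_0}^{*}D_{g_0}$ is elliptic, self-adjoint and non-negative on the closed manifold $M$; one solves $Pp=D_{g_0}^{*}S$ (the compatibility condition $D_{g_0}^{*}S\perp\ker D_{g_0}$ being automatic), invokes elliptic regularity, and sets $h=S-D_{g_0}p$. This is precisely the argument behind the reference the paper invokes for this lemma, \cite[Theorem 14.1.8]{Lef} (the paper gives no proof of its own), so there is nothing to compare on that front.

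The gap is in your final step. It is false that $\ker D_{g_0}=\{0\}$ on $C^{\infty}(M;S^{m-1}T^*M)$ for every positive degree: the tensor $\mathrm{Sym}(g_0^{\otimes k})\in C^{\infty}(M;S^{2k}T^*M)$ is parallel, hence $D_{g_0}\mathrm{Sym}(g_0^{\otimes k})=0$; equivalently, $\pi_{2k}^{*}\mathrm{Sym}(g_0^{\otimes k})=|v|^{2k}=1$ on $SM$ and $X1=0$. Your harmonic-layer induction detects exactly this obstruction: peeling off trace-free components via \eqref{eq:tensordecomp} and \eqref{eq:XandD}, the injectivity \eqref{eq:X_+inj} kills the layers in $\Omega_j$ with $j\geq 1$, but the induction terminates at the $\Omega_0$ layer, where $X_+u_0=0$ only forces $u_0$ to be constant (note that \eqref{eq:X_+inj} is stated only for $m\geq 1$). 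Consequently $\ker D_{g_0}$ on $S^{m-1}T^*M$ is trivial when $m-1$ is odd, but equals $\mathbb R\cdot\mathrm{Sym}\big(g_0^{\otimes (m-1)/2}\big)$ when $m-1$ is even and positive. The pair $(p,h)$ is therefore unique exactly when $m$ is even --- which covers the two instances actually used in the paper, $m=2$ in the slice lemma and $m=4$ in Section \ref{sectionsinj} --- while for odd $m\geq 3$ uniqueness of $p$ requires the additional normalization $p\perp\ker D_{g_0}$ (this is how the statement is usually formulated, and how the lemma as printed should be read). You should either add that normalization or restrict the degrees for which you claim uniqueness of $p$ itself.
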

The analysis will take place on the space of divergence-free (or solenoidal) tensors $C^{\infty}(M;S^mT^*M)\cap \mathrm{Ker}(D_{g_0}^*)$. We thus recall the following lemma which allows to "project" a metric $g$ on solenoidal tensors. It was obtained in this form in \cite[Lemma 2.4]{GuKnLef} but the idea goes back to Ebin \cite{Eb}.
\begin{lemm}[Slice lemma]
\label{slice lemma}
Let $k\geq 2$,  and $\alpha\in (0,1)$. Then there exists a neighborhood $\mathcal U$ of $g_0$ in the $C^{k,\alpha}$-topology such that for any $g\in \mathcal U$, there is a unique $\phi \in \mathrm{Diff}_0(M)$ of regularity $C^{k+1,\alpha}$, close to identity, such that $\phi^*g\in \mathrm{Ker}(D_{g_0}^*)$ is divergence free. Moreover, there exists $\epsilon>0$ and $C>0$ such that
$$\|g-g_0\|_{C^{k,\alpha}}\leq \epsilon \ \Rightarrow \ \|\phi^*g-g_0\|_{C^{k,\alpha}}\leq C\|g-g_0\|_{C^{k,\alpha}}.$$

\end{lemm}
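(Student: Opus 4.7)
The plan is to apply the implicit function theorem in Hölder spaces to the map
$$F(V, g) := D_{g_0}^{*}\bigl((\exp_{g_0}V)^{*}g\bigr),$$
defined on a neighborhood of $(0,g_0)$ in $C^{k+1,\alpha}(M;TM) \times C^{k,\alpha}(M;S^2T^{*}M)$ with values in $C^{k-1,\alpha}(M;T^{*}M)$. Here $\exp_{g_0}V$ denotes the time-one flow of $V$, which is a $C^{k+1,\alpha}$-diffeomorphism isotopic to the identity when $V$ is small. Since $\nabla^{g_0}g_0 = 0$, one has $F(0,g_0) = D_{g_0}^{*}g_0 = 0$, and $F(V,g)=0$ is exactly the slice condition $(\exp_{g_0}V)^{*}g \in \mathrm{Ker}(D_{g_0}^{*})$.

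The key step is to check that $\partial_V F|_{(0,g_0)}$ is an isomorphism of the relevant Hölder spaces. Using the standard identity $\mathcal{L}_W g_0 = 2D_{g_0}W^{\flat}$, where $W^{\flat}$ is the $g_0$-dual $1$-form of $W$, a direct computation gives
$$\partial_V F|_{(0,g_0)}(W) \;=\; 2\,D_{g_0}^{*}D_{g_0}W^{\flat}.$$
The operator $P := D_{g_0}^{*}D_{g_0}$ on $1$-forms is formally self-adjoint, non-negative, and second-order elliptic (its principal symbol is a positive scalar multiple of $|\xi|^{2}\mathrm{Id}$, as in the rough Laplacian). Its kernel consists of $1$-forms dual to Killing vector fields of $g_0$, and since $g_0$ is negatively curved the standard Bochner argument forces such Killing fields to vanish on the closed manifold $M$. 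Hence $P$ is an isomorphism $C^{k+1,\alpha}(M;T^{*}M) \to C^{k-1,\alpha}(M;T^{*}M)$ by elliptic regularity.

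The implicit function theorem then yields a unique $C^{1}$ map $g \mapsto V(g)$, defined on a $C^{k,\alpha}$-neighborhood of $g_0$, with $V(g_0)=0$ and $F(V(g),g)=0$; the local uniqueness of $V$ translates into the uniqueness of $\phi_g := \exp_{g_0}V(g)$ among diffeomorphisms close to the identity. Since $\exp_{g_0}:C^{k+1,\alpha}(M;TM) \to \mathrm{Diff}_0^{k+1,\alpha}(M)$ is smooth near $0$, the resulting $\phi_g$ is indeed $C^{k+1,\alpha}$. The Lipschitz estimate follows from the $C^{1}$ dependence of $V$ on $g$ combined with $V(g_0)=0$: one gets $\|V(g)\|_{C^{k+1,\alpha}} \lesssim \|g-g_0\|_{C^{k,\alpha}}$, and then the smoothness of the pullback map $(V,g) \mapsto (\exp_{g_0}V)^{*}g$ from $C^{k+1,\alpha}\times C^{k,\alpha}$ into $C^{k,\alpha}$ near $(0,g_0)$ gives $\|\phi_g^{*}g-g_0\|_{C^{k,\alpha}} \leq C\|g-g_0\|_{C^{k,\alpha}}$.

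The main obstacle in this plan is the invertibility of the linearization $P=D_{g_0}^{*}D_{g_0}$, which reduces to the absence of nontrivial Killing vector fields on the closed negatively curved manifold $(M,g_0)$ --- this is where the negative-curvature hypothesis enters essentially. The remaining verifications --- ellipticity, careful tracking of Hölder indices through the exponential map, and the one-derivative loss through $D_{g_0}^{*}$ --- are routine but genuinely require the positive Hölder exponent $\alpha>0$ in order to handle compositions with diffeomorphisms in a tame way.
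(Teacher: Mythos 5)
Your proposal is correct and coincides with the argument the paper itself relies on: the paper does not prove the slice lemma but quotes it from Guillarmou--Knieper--Lefeuvre (Lemma 2.4 there, going back to Ebin), whose proof is precisely this implicit-function-theorem argument with linearization $2D_{g_0}^*D_{g_0}$ on $1$-forms, made invertible by the nonexistence of Killing fields in negative curvature. The only slip is cosmetic: the principal symbol of $D_{g_0}^*D_{g_0}$ on $1$-forms is $\tfrac12\big(|\xi|^2\,\mathrm{Id}+\xi\otimes\xi\big)$ rather than a scalar multiple of the identity, but it is still positive definite, so the ellipticity you invoke holds.
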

\begin{figure}
{\includegraphics[width=12cm]{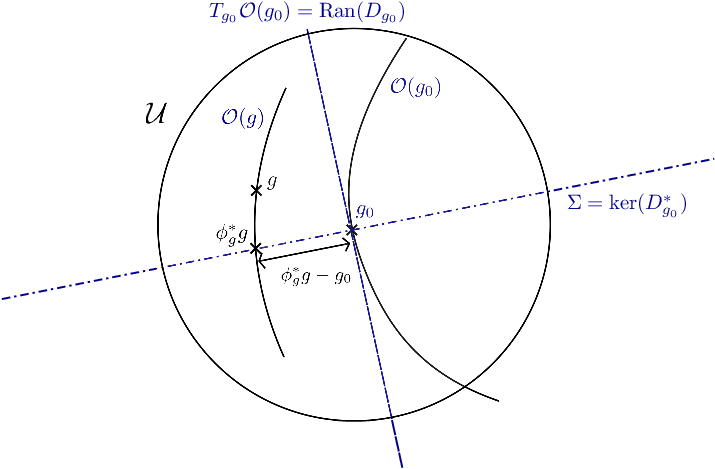}}
\caption{An illustration of the slice Lemma \ref{slice lemma} in a neighborhood $\mathcal U$ of $g_0$.}
\end{figure}
\subsection{The operator $\Pi$}
\label{Xray}
We recall in this subsection the main feature of the "generalized X-ray transform" $\Pi$. It was first introduced by Guillarmou in \cite{Gu} and used by Guillarmou and Lefeuvre in \cite{GuLef} for their proof of the local injectivity of the marked length spectrum. A quick definition of the operator can be given using the variance.
Let $f$ have zero mean, i.e., $\int_{SM}f(z)d\mu_{\mathrm{Liou}}^g(z)=0$. The variance of $f$, with respect to the Liouville measure $\mu^g_{\mathrm{Liou}}$ is
\begin{equation}
\label{eq:variance}
\mathrm{Var}_{\mu^g_{\mathrm{Liou}}}(f):=\lim_{T\to +\infty}\frac 1 T\int_{SM}\left(\int_0^Tf(\varphi_t(z))dt\right)^2d\mu_{\mathrm{Liou}}^g(z).
\end{equation}
Then for any smooth function $f\in C^{\infty}(SM)$, we define
\begin{equation}
\label{eq:Pi}
\quad \langle \Pi f,f\rangle=\int_{\mathbb R}\langle f\circ \varphi_t,f\rangle_{L^2}dt=\mathrm{Var}_{\mu_{\mathrm{Liou}}^g}(f).
\end{equation}
The second equality follows from the exponential decay of correlations \cite{Liv}, see for instance \cite[Equation $(2.6)$]{GuKnLef}. The Hessian $d^2\Phi(g_0)$ will be rewritten using the operator $\Pi$ in Theorem \ref{theoHess}.

A more microlocal definition of the operator $\Pi$ was given originally in \cite{Gu} which allowed to track its analytical properties but we will not need to recall this definition here. We now define 
\begin{equation}
\label{eq:Pim}
\Pi_m:=(\pi_m)_*(\Pi+\langle \cdot, 1\rangle_{L^2})\pi_m^*.
\end{equation}
The operator $\Pi_m$ was shown to be a pseudo-differential operator of order $-1$ which is elliptic and injective on solenoidal tensors (see \cite[Theorem 3.5, Lemma 4.3]{Gu,GuLef}). For any $Q\in C^{\infty}(M;S^mT^*M)$, using \eqref{eq:Pi} gives
\begin{equation}
\label{eq:Pi2}\mathrm{Var}_{\mu_{\mathrm{Liou}}^{g_0}}(\pi_m^*Q,\pi_m^*Q)=\langle \Pi_2^{g_0}Q,Q\rangle -\langle \pi_2^*Q,1\rangle_{L^2(SM)}^2.
\end{equation}
In negative sectional curvature, we have the following coercive estimate, which we will use crucially to obtain the stability estimate \eqref{eq:stability} \cite[Lemma 2.1]{GuKnLef}.
\begin{equation}
\label{eq:coercive}
\exists C>0,\ \forall h\in H^{-1/2}(M;S^mT^*M),\ \langle \Pi_m h,h\rangle \geq C\|\Pi_{\mathrm{Ker}(D_{g_0}^*)}h\|_{H^{-1/2}}^2
\end{equation}
where $\Pi_{\mathrm{Ker}(D_{g_0}^*)}$ is the projection on solenoidal (or divergence-free) tensors.

\subsection{Locally symmetric spaces}
\label{locsym}
Katok's entropy conjecture concerns locally symmetric metrics. We recall here the Riemannian definition. Recall that given a metric $g$, its $(3,1)$ curvature tensor $R_g$ is defined to be
\begin{equation}
\label{eq:Rg} R_g(X,Y,Z):=\nabla_{[X,Y]}Z-[\nabla_X,\nabla_Y]Z, \qquad X,Y,Z\in C^{\infty}(M;TM).
\end{equation}
\begin{defi}
Let $M^n$ be a differentiable manifold. A metric $g$ on $M$ is locally symmetric if its curvature tensor $R_g$ is parallel : $\nabla_g R_g=0$.
\end{defi}
If $(x,v)\in SM$ and $Y$ is the corresponding geodesic vector field, then one can find parallel orthonormal fields $Y_1=Y,Y_2,\ldots, Y_n$ (along the geodesic defined by $(x,v)$) such that $R(Y,Y_j)Y=-\lambda_j^2Y_j$ for some $\lambda_j$ that do not depend on $(x,v)$. 

This means that the unstable Jacobian $J^u(v):=-\tfrac{d}{dt}|_{t=0}\mathrm{det}(d\varphi_t(v)|_{E_u(v)})$, where $E_u(v)$ is the unstable bundle at  $v\in SM$, is a constant function of $v$. In particular, its equilibrium state (the Sinai-Ruelle-Bowen measure, which in this case is just the Liouville measure) coincides with the equilibrium state for the null-potential (i.e., the measure of maximal entropy). This provides the first direction of Katok's entropy conjecture, namely: for a locally symmetric metric, the Liouville measure is the measure of maximal entropy.

There is a more algebraic characterization of locally symmetric spaces. Indeed, it is known that \cite[Chapter 19]{Most} compact locally symmetric spaces of negative sectional curvature are obtained as compact quotients of hyperbolic spaces. More precisely, one has

If $(M^n,g_0)$ is a closed manifold isometric to a locally symmetric space of negative sectional curvature, then $(M^n,g_0)$ is a compact quotient of one of the following spaces:
\begin{itemize}
\item The real hyperbolic space $\mathbb R \mathbb H^n$.
\item The complex hyperbolic space $\mathbb C \mathbb H^{n/2}$.
\item The quaternionic hyperbolic space $\mathbb H \mathbb H^{n/4}$.
\item The octonionic hyperbolic plane $\mathbb O \mathbb H^2$ and in this case the real dimension of $M$ is equal to $16$.
\end{itemize}

We note that for a locally symmetric metric, the Lyapunov exponents of the geodesic flow are given by:
\begin{itemize}
\item $(1,1,\ldots,1, 0, -1,\ldots, -1)$ for a quotient of $\mathbb R \mathbb H^n$.
\item $(2,1,1,\ldots,1, 0, -1,-1,\ldots,-1, -2)$ for a quotient of $\mathbb C \mathbb H^{n/2}$
\item $(2,2,2,1,\ldots,1, 0, -1,-1,\ldots,-2,-2, -2)$ for a quotient of $\mathbb H \mathbb H^{n/4}$.
\item $(\underbrace{2,2,\ldots,,2}_{\text{seven times}},1,\ldots,1, 0, -1,-1,\ldots,\underbrace{-2,\ldots,-2}_{\text{seven times}})$ for a quotient of $\mathbb O \mathbb H^2$.
\end{itemize}
In particular, we have 
\begin{equation}
\label{eq:prev} g_0(R(Y_j,Y)Y_j,Y)=g_0(R(Y,Y_j)Y,Y_j)=-g_0(\lambda_j^2 Y_j,Y_j)=-\lambda_j^2\in\{-1,-4\}
\end{equation}
where $Z\mapsto R(Y_j,Z)Y_j$ is a symmetric endomorphism. Moreover, since one has $-4\leq g(R(Y_j,Z)Y_j,Z)\leq-1$ for any unit length $Z$, this implies that
\begin{equation}
\label{eq:Lyapunov}
\forall j=1,\ldots, n, \ \nabla_Y Y_j=0, \ R(Y,Y_j)Y=-\lambda_j^2Y_j, \quad R(Y_j,Y)Y_j=-\lambda_j^2Y.
\end{equation}

In the rest of this section, we recall some geometric results we will need about locally symmetric spaces.

\subsubsection{Complex hyperbolic quotient} 
Since $(M^n, g)$ is a quotient of the complex hyperbolic space, it is equipped with a complex structure. By this, we mean that there exists a (global) section $J\in C^{\infty}(M;\mathrm{End}(TM))$ satisfying $J^2=-\mathrm{Id}$, compatible with the metric structure, i.e., $g(\cdot,\cdot)=g(J\cdot,J\cdot)$. Moreover, $(M^n ,g)$ is a Kähler manifold, which means that $J$ is parallel:
\begin{equation}
\label{eq:J//}
\nabla J=0, \qquad  \forall X,Y \in C^{\infty}(M,TM), \ \nabla_X (JY)=J(\nabla_X Y).
\end{equation}

The complex hyperbolic space has constant holomorphic sectional curvature:
\begin{equation}
\label{eq:v,Jv}
\forall (x,v)\in SM, \quad K_x(v,Jv):=g_x(R_x(v,Jv)v,Jv)=-4,
\end{equation}
see \cite[Chapter IX]{KN}. In particular, the parallel vector field $Y_2$ appearing in \eqref{eq:Lyapunov} can be taken equal to $JY$ in this case.

For a symmetric tensor $T\in C^{\infty}(M;S^mT^*M)$, we define $(T\circ J):=T(J\cdot,\ldots,J\cdot). $ Using the pullback map $\pi_m^*$, it corresponds to a natural action of $J$ on $C^{\infty}(SM)$:
\begin{equation}
\label{eq:J1}
\pi_m^*(T\circ J)=J(\pi_m^*T), \ (Jf)(x,v):=f(x,Jv), \ (x,v)\in SM,\  f\in C^{\infty}(SM). 
\end{equation}
Since $J$ commutes with the trace, we obtain using \eqref{eq:iso} that $J$ preserves $\Omega_m$ for any $m\in \mathbb N$.
For later use, we also define a twisted version of the symmetrized covariant derivative:
\begin{equation}
\label{eq:D^J1}
\forall S\in C^{\infty}(M;S^mT^*M), \ D_{g_0}^JS:=(D_{g_0}(S\circ J))\circ J.
\end{equation}

\subsubsection{Quaternionic hyperbolic quotient.}
Quotients of the hyperbolic space $\mathbb H \mathbb H^n$ are examples of a quaternion-Kähler manifolds, \cite[Chapter 14]{Bes}. This time, there is no global complex structure but locally, one can find three almost complex structures $J_1,J_2,J_3:=J_1J_2$ compatible with the metric $g_0$. The endomorphism $J_i$ are not defined on the whole manifold but
$$Q:=\mathrm{Span}(J_1,J_2,J_3)$$
is a globally well-defined vector bundle. We construct a natural metric on $Q$ by requiring $(J_1,J_2,J_3)$ to be orthonormal. We will denote by $Z\subset Q$ the unit ball for this metric.  We remark that for a quotient of the quaternionic hyperbolic space, one has :
\begin{equation}
\label{eq:curvquater}
\forall (x,v)\in SM,\ \forall J\in Z, \quad K_x(x,Jv):=g_x(R_x(v,Jv)v,Jv)=-4, 
\end{equation}
see \cite[Proposition 10.12]{BriHa} or \cite[(14.44)]{Bes}. Note that it is not clear in this case than one can take $Y_2=J_1Y,Y_3=J_2Y$ and $Y_4=J_3Y$ in \eqref{eq:Lyapunov} as the almost complex structure $J_i$ may not be parallel but one still has
\begin{equation}
\label{eq:vect}
\mathrm{Span}(Y_2,Y_3,Y_4)=\mathrm{Span}(J_1Y,J_2Y,J_3Y).
\end{equation}
Similarly to the case of a quotient of a complex hyperbolic space, we will write
\begin{equation}
\label{eq:D^J}
\forall S\in C^{\infty}(U;S^mT^*U), \ D_{g_0}^{J_i}S:=(D_{g_0}(S\circ J_i))\circ J_i
\end{equation}
where $U$ is an open set on which $J_i$ are defined and span $E$.
\subsubsection{Octonionic Cayley plane}
This is the last possible type of rank one locally symmetric space of negative curvature. Note that it can only happen in dimension $16$. We refer to the appendix of \cite{Ruan} or \cite[Chapter 19]{Most} for more details about the construction. From the remark after \cite[Corollary A.6]{Ruan}, one sees that there may not exist local endomorphisms $J_1,\ldots, J_7\in \mathrm{End}(TM)$ such that $K_x(v,J_iv)=-4$ and we will thus use a slightly different approach. Let $\mathbb O$ denote the set of octonions. We define the \emph{Cayley line} through $(a,b)\in \mathbb O^2\setminus\{0\}$ to be
\begin{equation}
\label{eq:Cayleyline}
\mathrm{Cay}(a,b)=\begin{cases}\mathbb O(1,a^{-1}b) \text{ if } a\neq 0,
\\ \mathbb O(0,1) \text{ if } a= 0.
\end{cases}
\end{equation}
By \cite[Proposition A.5]{Ruan}, there exists an isomorphism $\Psi_x:\mathbb O^2\to T_xM$ for any point $x\in M$. We define the Cayley line of a vector to be
\begin{equation}
\label{eq:Cayley}
\mathrm{Cay}(v):=\Psi_x(\mathrm{Cay}(\Psi_x^{-1}(v))), \quad \forall v\in T_xM\setminus\{0\}.
\end{equation}
From \cite[Corollary 5.5]{Ruan}, we get, with the notations of \eqref{eq:Lyapunov}, for any $(x,v)\in SM$:
\begin{equation}
\label{eq:CayleySpan}
\mathrm{Cay}(v)=\mathrm{Span}(Y_1(x,v),\ldots,Y_8(x,v)),\ \forall z\in \mathrm{Cay}(v)\setminus \mathbb R v, \ K_x(v,z)=-4.
\end{equation}
\subsection{A theory of raising and lowering operators for complex hyperbolic spaces.}
\label{CH}
In this subsection, we build an analogous theory of the Guillemin–Kazhdan \cite{GK} raising/lowering
operators for surfaces for complex hyperbolic spaces. For a textbook account of the theory for surfaces, see \cite[Chapter 15.2]{Lef} or \cite[Chapter 1.11]{GuMaz}. The key point is that the complex structure recalled in \eqref{eq:J//} allows for a further splitting of the space of spherical harmonics. In particular, the geodesic vector field $X$ splits into raising and lowering operators which have specific mapping properties with respect to this splitting. In Lemma \ref{lemmV}, we show that these operators are injective when restricted to certain spaces and this will be the key of the proof of Theorem \ref{soltheo} in Section \ref{sinj}.

We define two important vector fields and compute their commutation relations. Consider a local orthonormal basis $(e_1,Je_1,e_2,Je_2, \ldots,e_m,Je_m)$ where $m=n/2$ near a point $p$. For $v\in T_xM$, we will write $v=\sum_{i=1}^m v_ie_i+v_{i+m}Je_i$. We remark that in these coordinates, one has $ Jv=\sum_{i=1}^m -v_{i+m}e_i+v_{i}Je_i$ where the indices for $v_i$ are taken modulo $m$.

For $(x,v)\in TM$, the vertical space $\mathbb V(x,v)$ is spanned by $Y_i(x,v):=\mathcal K_{(x,v)}^{-1}(e_i(x))$ and $ Y_{i+m}(x,v):=\mathcal K_{(x,v)}^{-1}(Je_i(x))$. They are called the vertical lifts of the basis. The horizontal space $\mathbb H(x,v)$ is spanned by $X_i(x,v):=d\pi(x,v)^{-1}(e_i(x))$ and $ X_{i+m}(x,v):=d\pi_{(x,v)}^{-1}(Je_i(x))$ and they are the horizontal lifts of the basis. We define:
\begin{equation}
\label{eq:Xperp}
H(x,v)=d\pi(x,v)^{-1}(-Jv)=\sum_{i=1}^m v_{i+m}X_i(x,v)-v_i  X_{i+m}(x,v)
\end{equation}
as well as
\begin{equation}
\label{eq:V}
V(x,v)=\mathcal K(x,v)^{-1}(Jv)=\sum_{i=1}^m -v_{i+m}Y_i(x,v)+v_i  Y_{i+m}(x,v).
\end{equation}
Denote by $J^{-1}=J^3$ the inverse of $J$ defined in \eqref{eq:J1}. We have the following lemma which gives an explicit link between $H$ and $X$.
\begin{lemm}[Conjugation relation]
The following holds for any $S\in C^{\infty}(M;S^mT^*M)$ and any $u\in C^{\infty}(SM)$,
\begin{equation}
\label{eq:H=JXJ}
 Hu=JXJ^{-1}u,  \qquad \pi_{m+1}^*(D_{g_0}^JS)=(-1)^mH\pi_m^*S.
\end{equation}
In particular for $m\geq 0$, one gets $J^{-1}=(-1)^mJ$ on $\Omega_m$. One has
$$H:C^{\infty}(M,\Omega_k)\to C^{\infty}(M,\Omega_{k-1})\oplus C^{\infty}(M,\Omega_{k+1}). $$
We will write $H=H_++H_-$, where $H_{\pm}:C^{\infty}(M,\Omega_k)\to C^{\infty}(M,\Omega_{k\pm1})$. On $\Omega_m$,
\begin{equation}
\label{eq:Hconj}
H=(-1)^mJXJ, \quad H_{\pm}=JX_{\pm}J^{-1}=(-1)^mJX_{\pm}J.
\end{equation}
In particular, one has a Pestov inequality for $H$:
\begin{equation}
\label{eq:PestovH}
\forall n\geq 4, \quad \forall u\in \Omega_m,\ \|H_-u\|_{L^2(SM)}^2\leq \|H_+u\|_{L^2(SM)}^2.
\end{equation}
\end{lemm}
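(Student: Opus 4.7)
My plan is to reduce every assertion of the lemma to a single pointwise identity expressing $H$ as a $J$-conjugate of $X$; all other conclusions will then follow by purely algebraic manipulations together with one application of Pestov's inequality \eqref{eq:Pestov}.

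The core step will be to prove $Hu = JXJ^{-1}u$ (up to a sign depending on conventions) for any $u \in C^\infty(SM)$. By definition $H(x,v)u = d\pi_{(x,v)}^{-1}(-Jv)\cdot u$, which I will realise as $\tfrac{d}{dt}\big|_{t=0} u(\gamma(t), \mathcal{P}_t v)$, where $\gamma$ is the geodesic with $\dot\gamma(0) = -Jv$ and $\mathcal{P}_t$ denotes parallel transport along $\gamma$. To compute the right-hand side, I will expand $(JXJ^{-1}u)(x,v) = (XJ^{-1}u)(x,Jv)$ and let $\sigma$ be the geodesic with $\dot\sigma(0) = Jv$. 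The Kähler condition $\nabla J = 0$ makes parallel transport commute with $J$, so $-J\dot\sigma(t) = \mathcal{P}_t(-J^2 v) = \mathcal{P}_t v$, and hence $(XJ^{-1}u)(x,Jv) = \tfrac{d}{dt}\big|_{t=0} u(\sigma(t), \mathcal{P}_t v)$; matching this with the formula for $Hu$ yields the identity. This is the only geometric input in the proof and will be the one place where I must carefully track signs arising from $J^2 = -\mathrm{Id}$, horizontal lifts, and reversed initial fiber velocities.

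Once that identity is established, the tensorial formula $\pi_{m+1}^*(D^J_{g_0}S) = (-1)^m H\pi_m^*S$ follows by unfolding $D^J_{g_0}S = (D_{g_0}(S\circ J))\circ J$ and combining $\pi_m^*(T\circ J) = J\pi_m^* T$ from \eqref{eq:J1} with the intertwining $X\pi_m^* = \pi_{m+1}^* D_{g_0}$ from \eqref{eq:XandD}, giving $\pi_{m+1}^*(D^J_{g_0}S) = JXJ\pi_m^*S$. The relation $J^{-1} = (-1)^m J$ on $\Omega_m$ will follow from the direct computation $(J^2 f)(x,v) = f(x, J^2 v) = f(x,-v)$, which identifies $J^2$ with the fiber parity operator; since spherical harmonics of degree $m$ are restrictions of homogeneous harmonic polynomials of degree $m$, they have parity $(-1)^m$, so $J^2 = (-1)^m$ on $\Omega_m$. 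Combining these three ingredients yields $\pi_{m+1}^*(D^J_{g_0}S) = (-1)^m H\pi_m^*S$, the decomposition $H = H_+ + H_-$ (since $J$ and $J^{-1}$ preserve each $\Omega_k$, as they commute with the vertical Laplacian because $J_x$ is orthogonal on $T_xM$), and the two formulas $H_\pm = JX_\pm J^{-1} = (-1)^m JX_\pm J$ on $\Omega_m$.

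Finally, to obtain the Pestov inequality for $H$, I will use that $J$ is an $L^2(SM,\mu_{\mathrm{Liou}})$-isometry, as each fiber map $J_x$ is orthogonal on $T_xM$ and hence preserves the round sphere measure on $S_xM$. For $u \in \Omega_m$, this gives $\|H_\pm u\|_{L^2}^2 = \|JX_\pm J^{-1}u\|_{L^2}^2 = \|X_\pm(J^{-1}u)\|_{L^2}^2$, and applying Pestov \eqref{eq:Pestov} to $J^{-1}u \in \Omega_m$ immediately yields $\|H_-u\|_{L^2}^2 \leq \|H_+u\|_{L^2}^2$, which is \eqref{eq:PestovH}. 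As stressed above, the only substantive difficulty lies in the Kähler commutation step; everything else is essentially bookkeeping.
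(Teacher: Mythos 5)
Your proposal is correct and follows essentially the same route as the paper: both rest on the single geometric fact that, since $\nabla J=0$ and horizontality of a curve $(x(t),v(t))$ means $\nabla_{\dot x}v=0$, horizontal differentiation commutes with the fiberwise map $v\mapsto Jv$ (you phrase this via parallel transport along geodesics, the paper via a frame parallel at the point), after which the tensorial identity, the parity relation $J^2=(-1)^m$ on $\Omega_m$, the splitting $H=H_++H_-$, and the Pestov inequality for $H$ all follow by the same algebra and the same $L^2(SM)$-isometry argument. The one caveat is the sign you explicitly defer: carried out literally with the stated conventions ($H=d\pi^{-1}(-Jv)$ and $(Jf)(x,v)=f(x,Jv)$), the computation gives $JXJ^{-1}u=\big(d\pi^{-1}(Jv)\big)u=-Hu$, but this discrepancy is already present in the paper's own derivation and is harmless, since $H$ enters the subsequent arguments only through $H^2$, $L^2$-norms, and sign-symmetric expressions.
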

\begin{proof}
Consider $f\in C^{\infty}(SM)$. For any horizontal vector field $W$, choose a path $(x(t),v(t))$ in $SM$ such that $(x(0),v(0))=(x,v)\in T(SM)$ and $(\dot x(0),\dot v(0))=W$. Recall that the fact that $W$ is horizontal is equivalent to $\mathcal K_{(x,v)}W:=\nabla_{\dot x}v|_{t=0}=0.$ Using the expansion in spherical harmonics and the isomorphism \eqref{eq:pi_k^*}, it is sufficient to prove \eqref{eq:H=JXJ} for $f=\pi_m^*S$ with $S\in  C^{\infty}(M;S_0^mT^*M)$ a spherical harmonics of degree $m$. In this case, one gets 
\begin{align*}W(Jf)(x,v)=&\frac{d}{dt}|_{t=0}f\big(x(t),Jv(t)\big)=\frac{d}{dt}|_{t=0}\sum_{i_1,i_2,\ldots ,i_m}S^{i_1,\ldots i_m}(x(t))\prod_{j=1}^m(Jv)_{i_j}(t),
\end{align*}
for some locally defined functions $ S^{i_1,\ldots i_m}$. In the last equation, we have denoted by $v_{j}(t)=g_{x(t)}(v(t),e_j(t))$ the coefficients of $v$ in some orthonormal moving frame that we can choose to be parallel at $(x,v)$. Using the fact that the $e_i$ are parallel at $(x,v)$ and the fact that $\mathcal K_{(x,v)}W:=\nabla_{\dot x}v|_{t=0}=0$ yields
$$ \frac{d}{dt}|_{t=0}g_{x(t)}(v(t),e_j(t))=g_x(\nabla_{\dot x}v(0),e_j(0))+g_x(v,\nabla_{\dot x}e_j)=0.$$
Using Leibniz rule then gives
$$W(Jf)(x,v)=\frac{d}{dt}|_{t=0}\left(\sum_{i_1,i_2,\ldots ,i_m}S^{i_1,\ldots, i_m}(x(t))\right)\prod_{j=1}^m(Jv)_{i_j}=[Wf](x,Jv). $$
Applying the previous result for $W=X_i$ and plugging this into \eqref{eq:Xperp} yields \eqref{eq:H=JXJ}.
Moreover, using the fact that $J$ is an isometry and \eqref{eq:Pestov} gives \eqref{eq:PestovH}. Equation \eqref{eq:Hconj} follows from \eqref{eq:H=JXJ}, the observation that $J^3=(-1)^mJ$ on $\Omega_m$ and the fact that $J$ preserves the space $\Omega_m$.
\end{proof}
 From \eqref{eq:V}, we see that the action of $V$ on $\Omega_m$ corresponds to 
\begin{equation}
\label{eq:VOmega}
\forall T\in C^{\infty}(M;S^mT^*M), \ V\pi_m^*T=\pi_m^*(VT),\quad VT:=m\times \mathrm{Sym}(T(J\cdot,\cdot,\ldots,\cdot)). 
\end{equation}
A direct computation, using $\sum_i T(Je_i,e_i,\ldots)=-\sum_iT(e_i,Je_i,\ldots)$, gives
$$ \forall T\in C^{\infty}(M;S^mT^*M), \ \tr(V T)=V \tr(T),$$
which show that $V$ stabilizes $\Omega_m$ for any $m$. 

The vector fields $H$ and $V$ are related to the geodesic flow by the following relations.
\begin{lemm}[Commutation relations for complex case]
\label{lemmcommComplex}
The following identities hold:
\begin{align}
\label{eq:comm}
\quad [X,V]=H,&\quad [V,H]=X, \quad [H,X]=-4V.
\\
\label{eq:comm2}
[H_+,X_+]=[X_-,H_-]=0,&\quad  H_+X_-+H_-X_+-X_+H_--X_-H_+=-4V.
\end{align}
\end{lemm}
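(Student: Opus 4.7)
The plan is to establish the three Lie-bracket identities in \eqref{eq:comm} by direct geometric computation, and then to deduce \eqref{eq:comm2} by projecting onto spherical harmonic components.

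For the first two commutators, I would exploit the fact that $V$ generates the vertical flow
\[\phi_t(x, v) = (x, \cos(t)\,v + \sin(t)\,Jv),\]
the $J$-rotation in the holomorphic plane $\mathrm{span}(v, Jv)$, which preserves the unit sphere bundle. By the Kähler condition $\nabla J = 0$ recorded in \eqref{eq:J//}, parallel transport commutes with the endomorphism $\cos(t)\,\mathrm{Id} + \sin(t)\,J$; hence $\phi_t$ preserves the full horizontal distribution $\mathbb{H}_{\mathrm{tot}}$, and for any $w \in T_xM$, the horizontal lift of $w$ at $(x, \phi_t v)$ is the image under $d\phi_t$ of the horizontal lift of $w$ at $(x, v)$. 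Applied to the geodesic vector field $X|_{(x,v)} = d\pi^{-1}(v)$, this gives
\[(\phi_{-t})_* X|_{\phi_t(x, v)} = \text{horizontal lift of } \cos(t)\,v + \sin(t)\,Jv \text{ at } (x, v).\]
Differentiating at $t = 0$ produces the horizontal lift of $Jv$ at $(x, v)$, i.e.\ $-H(x,v)$; so $\mathcal L_V X = [V, X] = -H$, whence $[X, V] = H$. The analogous computation with $H|_{\phi_t(x,v)}$ being the horizontal lift of $-J\phi_t v = -\cos(t)\,Jv + \sin(t)\,v$ yields $\mathcal L_V H = X$, i.e.\ $[V, H] = X$.

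For the third identity, $[H, X] = -4V$, I would use the Jacobi-field interpretation of the geodesic flow differential. If $\psi_t$ denotes the geodesic flow, then $d\psi_t$ sends a tangent vector at $(x,v)$ with horizontal base $Y$ and vertical component $\mathcal K^{-1}(Z)$ to a vector at $\psi_t(x,v)$ whose horizontal base is $J(t)$ and whose vertical component is $\mathcal K^{-1}(J'(t))$, where $J$ is the Jacobi field along $\gamma$ with $J(0)=Y$ and $J'(0)=Z$. Since $H|_{\psi_t(x,v)}$ has horizontal base $-J\dot\gamma(t)$ and zero vertical component, $d\psi_{-t} H|_{\psi_t(x,v)}$ is determined by the Jacobi field with boundary data $J(t) = -J\dot\gamma(t)$, $J'(t)=0$. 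In a parallel orthonormal frame with $e_2 = Jv$, the identity $R(Jv,v)v = 4\,Jv$, which follows from \eqref{eq:Lyapunov} and the antisymmetry of $R$ in its first two arguments, decouples the Jacobi equation in the $e_2$-direction to the scalar ODE $A''(s) = 4 A(s)$ (the sign convention \eqref{eq:Rg} must be tracked carefully here). Solving with $A(t) = -1$, $A'(t) = 0$ gives $A(s) = -\cosh(2(s-t))$, so $J(0) = -\cosh(2t)\,Jv$ and $J'(0) = 2\sinh(2t)\,Jv$. Consequently $d\psi_{-t} H|_{\psi_t(x,v)} = \cosh(2t)\,H + 2\sinh(2t)\,V$, whose $t$-derivative at $0$ equals $4V$. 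Hence $[X, H] = 4V$ and $[H, X] = -4V$.

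Finally, \eqref{eq:comm2} follows from \eqref{eq:comm} by inserting $X = X_+ + X_-$, $H = H_+ + H_-$ into $[H, X] = -4V$ and sorting terms by the degree shift they induce on $\Omega_m$. Since $V$ preserves each $\Omega_m$ (recorded just above the statement via the formula $VT = m\,\mathrm{Sym}(T(J\cdot, \ldots, \cdot))$ and the fact that $J$ commutes with the trace), the $\pm 2$-degree components of the identity must vanish separately, yielding $[H_+, X_+] = 0$ and $[H_-, X_-] = 0$ (equivalently $[X_-, H_-] = 0$); the degree-preserving component gives the last identity of \eqref{eq:comm2}. The main obstacle is the Jacobi-field step: sign conventions associated with \eqref{eq:Rg} must be tracked carefully (the excerpt's Riemann tensor has opposite sign from the standard one, which affects whether the Jacobi equation reads $A''=4A$ or $A''=-4A$), and the boundary condition being imposed at time $t$ rather than $0$ is essential for extracting a nontrivial first derivative in $t$. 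All other steps are straightforward verifications.
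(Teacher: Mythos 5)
Your proof is correct. The paper itself does not write out an argument: it simply states that the proof is the same as in the surface case, citing \cite[Proposition 1.51]{GuMaz}, and notes that the factor $-4$ comes from the constant holomorphic sectional curvature \eqref{eq:v,Jv}. What you have done is supply, correctly, the computation that this citation delegates: the flow $\phi_t(x,v)=(x,\cos(t)v+\sin(t)Jv)$ of $V$ together with the Kähler condition \eqref{eq:J//} (which is exactly what makes $d\phi_t$ preserve horizontal lifts) gives $[X,V]=H$ and $[V,H]=X$; the Jacobi-field description of $d\psi_t$ in the plane $\mathrm{span}(\dot\gamma,J\dot\gamma)$, where the curvature operator has eigenvalue corresponding to sectional curvature $-4$, gives $[H,X]=-4V$ (your sign bookkeeping with the convention \eqref{eq:Rg}, leading to $A''=4A$ and hence $\tfrac{d}{dt}\big|_{t=0}\big(\cosh(2t)H+2\sinh(2t)V\big)=4V$, is right); and sorting $[H,X]=-4V$ by the degree shift on $\Omega_m$, using that $V$ preserves each $\Omega_m$, yields \eqref{eq:comm2}. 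This is precisely the mechanism underlying the surface-case reference, so your argument is not a different route so much as a complete, self-contained version of the proof the paper omits; the only thing it buys beyond the citation is an explicit verification that the Kähler identity $\nabla J=0$ and the value $K(v,Jv)=-4$ are the only inputs needed in higher dimension.
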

The proof is the same as in the case of surfaces (see \cite[Proposition 1.51]{GuMaz}). The factor $-4$ comes from the fact that for any unit-length vector $v\in T_xM$, one has $K_x(v,Jv)=-4$, where $K_x$ denotes here the sectional curvature, see \eqref{eq:v,Jv}.

We now analyse the action of $H$ and $X$ on the eigenspaces of $V$. We define raising and lowering operators acting on eigenspaces of $V$
\begin{equation}
\label{eq:eta}
\eta_{+}^{\pm}=\frac {X_+   \pm  iH_+}{2}, \quad \eta_{-}^{\pm}=\frac {X_-    \pm i H_-}{2} .
\end{equation}
In the cases of surfaces, these operators were introduced and studied by Guillemin and Kazhdan, see \cite[Section 3]{GK}.
\begin{lemm}[Eigenspaces of $V$]
\label{lemmV} For any $m\geq 0,$ the restriction $V_{|\Omega_m}$ is antisymmetric and thus has spectrum included in $i\mathbb R$. Moreover, one has\footnote{The previous equality should hold for any $m$ but since we do not need it, we will not check it.}
$$\mathrm{Spec}(V_{|\Omega_m})=\{i(m-2k) \mid 0\leq  k \leq m\},\quad 0\leq m \leq 3. $$
For $\lambda \in \mathbb Z$, define $E^\lambda _m:=\Omega_m\cap \mathrm{ker}(V-i\lambda)$, then one has
\begin{equation}
\label{eq:XandH}
X_{\pm}=\eta^{+}_{\pm}+\eta^{-}_{\pm}, \quad 
H_{\pm}=i(\eta^{+}_{\pm}-\eta^{-}_{\pm}).
\end{equation}
Here, the raising and lowering operators have the mapping property $\eta^{\delta}_{\epsilon}: E^\lambda_m\to E^{\lambda+\delta}_{m+\epsilon}$ where $\epsilon,\delta\in \{\pm 1\}$. Finally, if $\lambda> 0$, then $\eta_+^+: E^\lambda _m \to E^{\lambda+1}_{m+1}$ is injective.

\end{lemm}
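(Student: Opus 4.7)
I would tackle the four claims of the lemma in order, with the last claim being the main analytic obstacle.

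For the antisymmetry of $V|_{\Omega_m}$, I would observe that $V$ is the vertical lift of $Jv$ along the fiber, so it generates the flow $(x,v)\mapsto(x,(\cos t)\,v+(\sin t)\,Jv)$ on $SM$. Since $J$ is a $g$-orthogonal endomorphism, this flow acts by isometries on each fiber $S_xM$, hence preserves the Liouville measure. As the spherical harmonic decomposition is intrinsic to the fiber geometry, the flow preserves each $\Omega_m$, so $V|_{\Omega_m}$ is skew-adjoint and its spectrum lies in $i\mathbb R$. For the explicit spectrum when $0\le m\le 3$, I would use $J$ to identify each fiber $T_xM\cong\mathbb C^{n/2}$ and recognize $V$ as the infinitesimal generator of the diagonal $U(1)$-action $v\mapsto e^{i\theta}v$. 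Under the bidegree decomposition of harmonic polynomials, a $(p,q)$-harmonic of total degree $m=p+q$ transforms by $e^{i(p-q)\theta}$, giving the eigenvalue $i(p-q)=i(m-2q)$ as $q$ ranges in $\{0,\dots,m\}$. For $n\ge 4$ and $m\le 3$, a direct dimension count shows each bidegree $(p,q)$ with $p+q=m$ supports a nonzero harmonic subspace, so all $m+1$ eigenvalues appear.

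The decompositions $X_\pm=\eta^+_\pm+\eta^-_\pm$ and $H_\pm=\mp i(\eta^+_\pm-\eta^-_\pm)$ follow immediately from definition \eqref{eq:eta}. For the mapping property $\eta^\delta_\epsilon:E^\lambda_m\to E^{\lambda+\delta}_{m+\epsilon}$, the shift in $\Omega_m$-degree is built into $X_\epsilon$ and $H_\epsilon$. For the $V$-eigenvalue shift, decomposing the commutation relations \eqref{eq:comm} degree by degree yields $[V,X_\pm]=-H_\pm$ and $[V,H_\pm]=X_\pm$, from which
\[
[V,\eta^\delta_\epsilon]=\tfrac12([V,X_\epsilon]+\delta\, i[V,H_\epsilon])=\tfrac12(-H_\epsilon+\delta\,iX_\epsilon)=i\delta\,\eta^\delta_\epsilon.
\]
Hence if $Vf=i\lambda f$, then $V\eta^\delta_\epsilon f=i(\lambda+\delta)\eta^\delta_\epsilon f$, proving the stated map property.

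The main obstacle is the injectivity of $\eta^+_+$ on $E^\lambda_m$ when $\lambda>0$, and I would prove it via a Pestov-style identity. Using $X_+^*=-X_-$, $H_+^*=-H_-$ (which follow from the skew-adjointness of $X$ and $H$ together with degree-grading orthogonality), one gets $(\eta^+_+)^*=-\eta^-_-$ and $(\eta^+_-)^*=-\eta^-_+$, which in turn give
\[
\|\eta^+_+ f\|^2-\|\eta^-_- f\|^2=-\langle f,[\eta^-_-,\eta^+_+]f\rangle,\qquad \|\eta^+_- f\|^2-\|\eta^-_+ f\|^2=-\langle f,[\eta^-_+,\eta^+_-]f\rangle.
\]
Expanding $[X+iH,X-iH]=-2i[X,H]=-8iV$ and using $[\eta^+_+,\eta^-_+]=[\eta^+_-,\eta^-_-]=0$ from \eqref{eq:comm2}, one finds $[\eta^-_-,\eta^+_+]+[\eta^-_+,\eta^+_-]=2iV$. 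Adding the two displayed identities therefore gives, for any $f\in E^\lambda_m$,
\[
(\|\eta^+_+ f\|^2+\|\eta^+_- f\|^2)-(\|\eta^-_- f\|^2+\|\eta^-_+ f\|^2)=2\lambda\|f\|^2.
\]
On the other hand, since $\eta^+_\epsilon f\in E^{\lambda+1}_{m+\epsilon}$ and $\eta^-_\epsilon f\in E^{\lambda-1}_{m+\epsilon}$ lie in orthogonal $V$-eigenspaces, $\|X_\pm f\|^2=\|\eta^+_\pm f\|^2+\|\eta^-_\pm f\|^2$, so the Pestov inequality \eqref{eq:Pestov} reads $\|\eta^+_- f\|^2+\|\eta^-_- f\|^2\le\|\eta^+_+ f\|^2+\|\eta^-_+ f\|^2$. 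Assuming $\eta^+_+ f=0$, this inequality becomes $\|\eta^+_- f\|^2+\|\eta^-_- f\|^2\le\|\eta^-_+ f\|^2$, while the displayed identity yields $\|\eta^+_- f\|^2=2\lambda\|f\|^2+\|\eta^-_- f\|^2+\|\eta^-_+ f\|^2$. Substituting the latter into the former gives $2\lambda\|f\|^2+2\|\eta^-_- f\|^2\le 0$, which forces $f=0$ as soon as $\lambda>0$. The only place the argument is delicate is the bookkeeping of the commutators; the rest is algebra and the already-available Pestov inequality.
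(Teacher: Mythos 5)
Your proof is correct, and on the crucial point -- the injectivity of $\eta_+^+$ -- it is essentially the paper's argument in disguise: your identity $(\|\eta^+_+f\|^2+\|\eta^+_-f\|^2)-(\|\eta^-_-f\|^2+\|\eta^-_+f\|^2)=2\lambda\|f\|^2$ is an exact rearrangement of the paper's commutator identity \eqref{eq:NewPestov}, obtained from the same adjoint relations $(\eta_+^+)^*=-\eta_-^-$ and the same commutation relations \eqref{eq:comm2}--\eqref{eq:comm3}. The two endgames differ slightly: the paper combines both Pestov inequalities \eqref{eq:Pestov} and \eqref{eq:PestovH} to get the quantitative bound $\|\eta_+^+u\|^2\geq\lambda\|u\|^2$, whereas you set $\eta_+^+f=0$ and reach a contradiction using only the $X$-Pestov inequality together with the orthogonal splitting $\|X_\pm f\|^2=\|\eta^+_\pm f\|^2+\|\eta^-_\pm f\|^2$; this is marginally more economical but yields only injectivity rather than the lower bound, which the paper in fact reuses later (in the proof of Proposition \ref{propcoercive}). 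For the spectrum, your route is genuinely different: you diagonalize the fiberwise $U(1)$-action $v\mapsto e^{tJ}v$ via the bidegree decomposition of harmonic polynomials, while the paper derives polynomial identities satisfied by $V$ on $\Omega_2$ and $\Omega_3$ (e.g. $V^2T=-2T+2T\circ J$); your method has the advantage of working for all $m$ at once and of actually exhibiting each eigenvalue, whereas the paper's computation a priori only bounds the spectrum. Your verification of the mapping property via $[V,\eta^\delta_\epsilon]=i\delta\,\eta^\delta_\epsilon$ fills in a step the paper leaves implicit. One harmless slip: with the convention \eqref{eq:eta} one has $H_\pm=-i(\eta^+_\pm-\eta^-_\pm)$ for both signs, so your $\mp i$ is off in the lower case (as is the paper's $+i$); since only $\|H_\pm f\|^2$ and the commutators enter, nothing downstream is affected.
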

\begin{proof}
On $\Omega_0$, $V$ acts as the zero-operator so its spectrum is equal to $\{0\}$. On $\Omega_1$, $V$ acts as $J$ and using $J^2=-\mathrm{Id}$, one sees that the spectrum is given by $\{\pm i\}$. For $\Omega_2$, we compute for any $ T\in C^{\infty}(M;S^2T^*M)$,
$ V^2T=-2T+2T\circ J, $
which implies
\begin{equation}
\label{eq:JandV}
\forall u\in \Omega_2, \ Ju=u+\frac 12 V^2u.
\end{equation}
In particular, we see that $J$ is a polynomial in $V$ and this implies that the spectrum of $V$ is $\{\pm 2i,0\}$. For $\Omega_3$, we get this time, for any $T\in C^{\infty}(M;S^3T^*M)$,
\begin{equation}
\label{eq:eqtriple}
 \begin{cases}V^2T=-3T-2V( T\circ J)
\\
V^3 T=-7VT+6T\circ J.
\end{cases}
\end{equation}
These two equations imply that the spectrum is $\{\pm 3i,\pm i\}$.

The mapping properties of $\eta^{\delta}_{\epsilon}$ is a consequence of the commutation relations \eqref{eq:comm}. 
Let us prove the injectivity statement. For this, we first note that $(\eta_+^+)^*=-\eta_-^-$ and we compute, using $X_+^*=-X_-$ and $H_+^*=-H_-$, 
\begin{align*}
4[(\eta_+^+)^*\eta_+^+-\eta_+^+(\eta_+^+)^*]&=-X_-X_++X_+X_--H_-H_++H_+H_-
\\&-i(X_-H_++X_+H_--H_-X_+-H_+X_-)
\\&=-X_-X_++X_+X_--H_-H_++H_+H_--4iV,
\end{align*}
where we used the commutation relation \eqref{eq:comm2}. In particular, if $u\in E^\lambda_m$ then
\begin{equation}
\begin{split}
\label{eq:NewPestov}
4\|\eta_+^+u\|_{L^2(SM)}^2&=4\|\eta_-^-u\|_{L^2(SM)}^2+\|X_+u\|_{L^2(SM)}^2-\|X_-u\|_{L^2(SM)}^2
\\&+\|H_+u\|_{L^2(SM)}^2-\|H_-u\|_{L^2(SM)}^2+4\lambda \|u\|_{L^2(SM)}^2. 
\end{split}
\end{equation}
By the Pestov identities \eqref{eq:Pestov}, \eqref{eq:PestovH}, one gets, as a consequence of the previous equation, $\|\eta_+^+u\|_{L^2(SM)}^2\geq \lambda\|u\|_{L^2(SM)}^2$. Since $\lambda> 0$, this concludes the proof of injectivity.
\end{proof}
We record the following commutation relations which follow from \eqref{eq:comm2}:
\begin{equation}
\label{eq:comm3}
[\eta_+^{+},\eta_+^-]=0,\quad [\eta_-^{+},\eta_-^-]=0.
\end{equation}

We will need other estimates which we record in the following lemma. For a symmetric tensor $T$, we write $T_0$ for its trace-free part.
\begin{lemm}
\label{lemm1}
Let $T_0\in C^{\infty}(M;S^m_0T^*M)$ and denote by $f=\pi_m^*T_0$. Then one has
$$\begin{cases}
\max(\|X_+f\|_{L^2(SM)}^2,\|H_+f\|_{L^2(SM)}^2)\leq (\Lambda_{m+1}^n)^{-1}\|\nabla_{g_0}T_0\|_{L^2(S^2T^*M)}^2,
\\
(\|Xf\|_{L^2(SM)}^2+\|Hf\|_{L^2(SM)}^2)\leq (\Lambda_2^n)^{-1}\|\nabla_{g_0}T_0\|_{L^2(S^2T^*M)}^2,
\end{cases} $$
where in the second bound, we have $m=2$ and $\Lambda_m^n$ is defined in \ref{eq:conformal}.
\end{lemm}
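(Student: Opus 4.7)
My plan is to express $X_+f$ and $H_+f$ as pullbacks of trace-free symmetric tensors derived from $T_0$, apply the conformal relation \eqref{eq:conformal}, and use that orthogonal projections are contractions to bound everything by $\|\nabla_{g_0}T_0\|^2$. The second bound then follows by combining the first with the Pestov inequalities.

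\textbf{Step 1 (bound on $X_+f$).} From \eqref{eq:XandD}, $Xf=\pi_{m+1}^*D_{g_0}T_0$. Decompose $D_{g_0}T_0$ via \eqref{eq:tensordecomp} as $D_{g_0}T_0=(D_{g_0}T_0)_0+L(\widetilde h)$ with trace-free leading part; since $\pi^*_k\circ L$ restricts to $SM$ to $\pi^*_{k-2}$, only the trace-free summand contributes to the $\Omega_{m+1}$-component. Hence $X_+f=\pi_{m+1}^*(D_{g_0}T_0)_0$. Applying \eqref{eq:conformal} to this trace-free tensor yields
\[
\|X_+f\|_{L^2(SM)}^2=(\Lambda_{m+1}^n)^{-1}\|(D_{g_0}T_0)_0\|_{L^2}^2.
\]
Since taking trace-free part and symmetrization $\mathrm{Sym}$ are orthogonal projections, one has
\[
\|(D_{g_0}T_0)_0\|^2\leq \|D_{g_0}T_0\|^2=\|\mathrm{Sym}(\nabla_{g_0}T_0)\|^2\leq \|\nabla_{g_0}T_0\|^2,
\]
which gives the desired bound on $\|X_+f\|^2$.

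\textbf{Step 2 (bound on $H_+f$).} By the conjugation relation \eqref{eq:Hconj}, $H_+=JX_+J^{-1}$, and $J$ acts fiberwise as an isometry of the unit sphere, hence as an isometry on $L^2(SM)$. Therefore $\|H_+f\|=\|X_+(J^{-1}f)\|$. By \eqref{eq:J1}, $J^{-1}f=\pi_m^*(T_0\circ J^{-1})$, and $T_0\circ J^{-1}$ is again trace-free since $J^{-1}$ is orthogonal on each $T_xM$. The Kähler identity \eqref{eq:J//} gives $\nabla(T_0\circ J^{-1})=(\nabla T_0)\circ J^{-1}$ pointwise, which has the same norm as $\nabla T_0$. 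Applying Step~1 to $T_0\circ J^{-1}$ then yields $\|H_+f\|^2\leq (\Lambda_{m+1}^n)^{-1}\|\nabla_{g_0}T_0\|^2$.

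\textbf{Step 3 (the $m=2$ inequality).} By orthogonality of $\Omega_{m\pm 1}$ one has $\|Xf\|^2=\|X_+f\|^2+\|X_-f\|^2$ and the analogous identity for $H$. The Pestov estimates \eqref{eq:Pestov}, \eqref{eq:PestovH}, which apply since the Kähler assumption forces $n=2m\geq 4$, give $\|X_-f\|^2\leq \|X_+f\|^2$ and $\|H_-f\|^2\leq \|H_+f\|^2$. Combining with Steps~1--2,
\[
\|Xf\|^2+\|Hf\|^2\leq 4(\Lambda_3^n)^{-1}\|\nabla_{g_0}T_0\|^2.
\]
A direct computation from the explicit formula $\Lambda_k^n=\Gamma(n/2+k)/(2^{1-k}k!\pi^{n/2})$ gives $\Lambda_3^n/\Lambda_2^n=(n+4)/3$, so $4(\Lambda_3^n)^{-1}=\tfrac{12}{n+4}(\Lambda_2^n)^{-1}\leq (\Lambda_2^n)^{-1}$ for $n\geq 8$; the sharper estimate $\|\mathrm{Sym}(\nabla T_0)\|^2\leq \tfrac{1}{m+1}\|\nabla T_0\|^2$ from Step~1 improves this to $\tfrac{4}{n+4}(\Lambda_2^n)^{-1}\leq (\Lambda_2^n)^{-1}$ for all $n\geq 4$.

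The only real work in this argument is the identification $X_+f=\pi_{m+1}^*(D_{g_0}T_0)_0$ and the careful use of the Kähler parallelism of $J$ to transfer the $X_+$ estimate to $H_+$; the remainder is bookkeeping with the conformal factors $\Lambda_m^n$ and applying isometry/contraction properties of the tensorial projections.
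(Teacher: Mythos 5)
Your Steps 1 and 2 reproduce the paper's argument for the first bound essentially verbatim: identify $X_+f=\pi_{m+1}^*(D_{g_0}T_0)_0$, apply \eqref{eq:conformal}, and use that taking the trace-free part and $\mathrm{Sym}$ are orthogonal projections; the transfer to $H_+$ via the parallelism and isometry of $J$ is also the paper's route. The problem is Step 3. Your Pestov route gives $\|Xf\|^2+\|Hf\|^2\leq 4(\Lambda_3^n)^{-1}\|\nabla_{g_0}T_0\|^2=\tfrac{12}{n+4}(\Lambda_2^n)^{-1}\|\nabla_{g_0}T_0\|^2$, which beats the claimed constant only for $n\geq 8$ — but the lemma is applied to complex hyperbolic quotients with $n=4,6$, and the precise constant $(\Lambda_2^n)^{-1}$ is what feeds the numerology ($97/96$, $95/192$) in the coercivity argument, so the weaker constant is not acceptable. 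The rescue you invoke, namely $\|\mathrm{Sym}(\nabla T_0)\|^2\leq \tfrac{1}{m+1}\|\nabla T_0\|^2$, does not appear in your Step 1 (which only used $\|\mathrm{Sym}(u)\|\leq\|u\|$) and is false as a general algebraic statement: $\mathrm{Sym}$ is an orthogonal projection that acts as the identity on already fully symmetric tensors, so one only has $\|\mathrm{Sym}(u)\|^2=\tfrac{1}{m+1}\|u\|^2+(\text{cross terms})$ for $u$ symmetric in its last $m$ slots, and the cross terms need not be nonpositive. You give no argument that $\nabla T_0$ stays away, in an $L^2$-averaged sense, from the fully symmetric subspace.

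The paper's proof of the second bound is different and avoids Pestov entirely: since $X$ and $H$ are two orthonormal directions inside the total horizontal distribution $\mathbb H_{\mathrm{tot}}=\mathbb H\oplus\mathbb R X$, one has
\begin{equation*}
\|Xf\|_{L^2(SM)}^2+\|Hf\|_{L^2(SM)}^2=\|\nabla_{\mathbb H}^{Q}f\|_{L^2(SM)}^2\leq \|\nabla_{\mathbb H}^{\mathrm{tot}}f\|_{L^2(SM)}^2=\langle \Delta_{\mathbb H}^{\mathrm{tot}}f,f\rangle_{L^2(SM)},
\end{equation*}
and Lemma \ref{lemm deltaH} together with \eqref{eq:conformal} identifies the right-hand side with $(\Lambda_2^n)^{-1}\|\nabla_{g_0}T_0\|_{L^2}^2$ exactly. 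You should replace your Step 3 by this argument (or supply a genuine proof of a sharpened symmetrization inequality, which I do not believe holds).
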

\begin{proof}
Using \eqref{eq:conformal}, one gets the following
\begin{align*}\|X_+f\|_{L^2(SM)}^2&=\|\pi_{m+1}^*(D_{g_0}T_0)_0\|_{L^2(SM)}^2=(\Lambda_{m+1}^n)^{-1}\|(D_{g_0}T_0)_0\|_{L^2(S^2T^*M)}^2
 \\&\leq (\Lambda_{m+1}^n)^{-1}\|D_{g_0}T_0\|_{L^2(S^2T^*M)}^2\leq (\Lambda_{m+1}^n)^{-1}\|\nabla_{g_0}T_0\|_{L^2(S^2T^*M)}^2.
\end{align*}
Here, we have used that $T\mapsto T_0$ and $\mathrm{Sym}$ are orthogonal projections. Similarly,
\begin{align*}\|H_+f\|_{L^2(SM)}^2&=\|\pi_{m+1}^*(D_{g_0}(T_0\circ J))_0\|_{L^2(SM)}^2=(\Lambda_{m+1}^n)^{-1}\|(D_{g_0}(T_0\circ J))_0\|_{L^2(S^2T^*M)}^2 \\&\leq (\Lambda_{m+1}^n)^{-1}\|D_{g_0}(T_0\circ J)\|_{L^2(S^2T^*M)}^2
\leq (\Lambda_{m+1}^n)^{-1}\|\nabla_{g_0}(T_0\circ J)\|_{L^2(S^2T^*M)}^2
\\&=(\Lambda_{m+1}^n)^{-1}\|\nabla_{g_0}T_0\|_{L^2(S^2T^*M)}^2,
\end{align*}
where we used that $J$ and $\nabla_{g_0}$ commute and that $J$ is an isometry. For the second bound, we remark that if we denote by $\nabla_{\mathbb H}^Q$ the projection of the horizontal gradient on $Q=\mathrm{Span}(X,H)\subset \mathbb H^{\mathrm{tot}}$, then  
\begin{align*}\|Xf\|_{L^2(SM)}^2+\|Hf\|_{L^2(SM)}^2&=\|\nabla_{\mathbb H}^Qf\|_{L^2(SM)}^2\leq \|\nabla_{\mathbb H}^{\mathrm{tot}}f\|_{L^2(SM)}^2=\langle \Delta_{\mathbb H}^{\mathrm{tot}}f,f\rangle _{L^2(SM)}\\&=\langle \pi_2^*(\nabla^*\nabla T_0), \pi_2^*T_0\rangle_{L^2(SM)}
=(\Lambda_2^n)^{-1}\|\nabla_{g_0}T_0\|_{L^2(S^2T^*M)}^2
\end{align*}
where we used  \eqref{eq:conformal} and \eqref{eq:deltaH}.
 \end{proof}
\subsection{Weitzenböck formula}
The last tool we will use is a Weitzenböck formula. Consider the symmetric $2$-tensor $S$ as $1$-form with values in $T^*M$. Denote by $d^{\nabla}$ the differential induced by the Levi Civita connection. Using \cite[(12.69)]{Bes} gives for any $ S\in C^{\infty}(M; S^2T^*M)$
\begin{equation}
\label{eq:Weit}
(d^{\nabla}d^{\nabla^*}+d^{\nabla^*}d^{\nabla})S=\nabla^*\nabla S-R^{\circ}(S)+S\circ \mathrm{Ric}.
\end{equation}
Here, we have used the notations
\begin{equation}
\label{eq:Ricci} R^{\circ}(S)(X,Y)=-\sum_{i=1}^n S(R(e_i,X)Y,e_i),\ S\circ \mathrm{Ric}(X,Y)=\sum_{i=1}^n S(R(e_i,X)e_i, Y)
\end{equation}
where $(e_i)_i$ is an orthonormal basis.
Since the curvature is negative, this identity provides a lower bound on the spectrum of the Laplacian, more precisely:
\begin{lemm}[Weitzenböck]
\label{Weitzenböck}
Let $(M^n,g_0)$ be a closed locally symmetric space of negative sectional curvature and let $S_0\in C^{\infty}(M; S_0^2T^*M)$.
\begin{itemize}
\item If $(M^n,g_0)$ is a quotient of the real hyperbolic space, then
\begin{equation}
\label{eq:Weit1}
\langle \nabla^*\nabla S_0, S_0\rangle_{L^2(S_0^2T^*M)}\geq n\|S_0\|_{L^2(S_0^2T^*M)}^2.
\end{equation}
\item If $(M^n,g_0)$ is a quotient of the complex hyperbolic space, then
\begin{equation}
\label{eq:Weit2}
\langle \nabla^*\nabla S_0, S_0\rangle_{L^2(S_0^2T^*M)}\geq (n+3)\|S_0\|_{L^2(S_0^2T^*M)}^2-3\langle S_0\circ J,S_0\rangle_{L^2(S_0^2T^*M)}.
\end{equation}
\item If $(M^n,g_0)$ is a quotient of the quaternionic hyperbolic space, then\footnote{Although the $J_i$ are only defined locally, the sum appearing in \eqref{eq:Weit3} is defined globally.}
\begin{equation}
\label{eq:Weit3}
\langle \nabla^*\nabla S_0, S_0\rangle_{L^2(S_0^2T^*M)}\geq (n+9)\|S_0\|_{L^2(S_0^2T^*M)}^2-3\sum_{i=1}^3\langle S_0\circ J_i,S_0\rangle_{L^2(S_0^2T^*M)}.
\end{equation}
\item If $(M^{16},g_0)$ is a quotient of the ocotonionic hyperbolic space, then
\begin{equation}
\label{eq:Weit4}
\langle \nabla^*\nabla S_0, S_0\rangle_{L^2(S_0^2T^*M)}\geq 36\|S_0\|_{L^2(S_0^2T^*M)}^2-\langle R^{\circ}(S_0),S_0\rangle_{L^2(S_0^2T^*M)}.\end{equation}
\end{itemize}
\end{lemm}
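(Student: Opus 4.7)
The plan is to apply the Weitzenböck identity \eqref{eq:Weit} to a trace-free symmetric $2$-tensor $S_0$ and to exploit the non-negativity of the operator $d^{\nabla}d^{\nabla^*}+d^{\nabla^*}d^{\nabla}$ acting on $L^2(M;S^2T^*M)$. Pairing \eqref{eq:Weit} with $S_0$ in $L^2$ yields the master inequality
\begin{equation*}
\langle \nabla^*\nabla S_0, S_0\rangle_{L^2}\;\geq\; \langle R^\circ(S_0), S_0\rangle_{L^2}-\langle S_0\circ\mathrm{Ric}, S_0\rangle_{L^2},
\end{equation*}
and each of the four bounds \eqref{eq:Weit1}--\eqref{eq:Weit4} will reduce to an explicit algebraic evaluation of $R^\circ$ and $S_0\circ\mathrm{Ric}$ on the corresponding background. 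The only preliminary point to fix is the sign convention for $R$: the choice \eqref{eq:Rg}, checked against the Lyapunov relations \eqref{eq:Lyapunov}, amounts to $R(X,Y)Z = g_0(Y,Z)X - g_0(X,Z)Y$ on the real hyperbolic model.

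The Ricci term is handled uniformly in the four cases. Completing a unit vector $Y=e_1$ to a parallel orthonormal frame adapted to \eqref{eq:Lyapunov}, the endomorphism $X\mapsto \sum_i R(e_i,X)e_i$ becomes diagonal in this frame with eigenvalue $-\sum_{j=2}^{n}\lambda_j^2$, which using the Lyapunov spectra recalled in Subsection \ref{locsym} equals $-(n-1)$, $-(n+2)$, $-(n+8)$ and $-36$ for the real, complex, quaternionic and octonionic cases respectively. Hence $\mathrm{Ric}$ is a scalar multiple of $g_0$ and $-\langle S_0\circ\mathrm{Ric}, S_0\rangle_{L^2}$ contributes precisely the leading terms $(n-1)\|S_0\|^2$, $(n+2)\|S_0\|^2$, $(n+8)\|S_0\|^2$ and $36\|S_0\|^2$ appearing on the right-hand side of the four bounds.

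The evaluation of $\langle R^\circ(S_0),S_0\rangle_{L^2}$ is case-dependent. In the real hyperbolic case a direct expansion gives $R^\circ(S_0) = S_0$ on the trace-free part, producing the missing $\|S_0\|^2$ in \eqref{eq:Weit1}. For the complex hyperbolic case I would substitute the standard Kähler curvature tensor of constant holomorphic curvature $-4$,
\begin{equation*}
R(X,Y)Z = g_0(Y,Z)X - g_0(X,Z)Y + g_0(JY,Z)JX - g_0(JX,Z)JY + 2\,g_0(X,JY)JZ,
\end{equation*}
and use that $\sum_i S_0(Je_i,e_i)=0$ (the symmetric tensor $S_0$ being paired with the antisymmetric Kähler form $g_0(J\cdot,\cdot)$) to obtain $R^\circ(S_0)=S_0-3\,S_0\circ J$, which yields \eqref{eq:Weit2}. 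The quaternionic case is formally identical, with the analogous curvature tensor now carrying an extra sum over the local almost complex structures $J_1,J_2,J_3$; the combination $\sum_{i=1}^{3}\langle S_0\circ J_i, S_0\rangle_{L^2}$ is invariant under orthogonal changes of the frame of $Q$ and therefore globally well defined, which gives \eqref{eq:Weit3}. The main obstacle is the octonionic plane, where no comparable $J$-style curvature identity is available (the local candidates $J_i$ need not satisfy $K_x(v,J_iv)=-4$, as noted before \eqref{eq:Cayleyline}); I would therefore not attempt to evaluate $R^\circ(S_0)$ further and simply leave it on the right-hand side, obtaining \eqref{eq:Weit4} directly from the master inequality together with the Ricci computation extracted from the Cayley-line structure \eqref{eq:CayleySpan}.
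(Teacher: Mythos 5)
Your proposal is correct and follows essentially the same route as the paper: the non-negativity of $d^{\nabla}d^{\nabla^*}+d^{\nabla^*}d^{\nabla}$ applied to \eqref{eq:Weit}, the Ricci eigenvalue $-\sum_j\lambda_j^2$ giving $-(n-1)$, $-(n+2)$, $-(n+8)$, $-36$, and a case-by-case evaluation of $R^{\circ}(S_0)$ (the paper computes $R^{\circ}$ via the parallel Lyapunov frame $Y_j$ and the pullback $\pi_2^*$ rather than via explicit model curvature tensors, but the outcome $R^{\circ}(S_0)=S_0$, resp.\ $S_0-3\,S_0\circ J$, is the same). One small caveat: in the octonionic case the master inequality yields $\langle\nabla^*\nabla S_0,S_0\rangle\geq 36\|S_0\|^2+\langle R^{\circ}(S_0),S_0\rangle$ with a plus sign on the curvature term, exactly as in the paper's own proof, so the minus sign displayed in \eqref{eq:Weit4} is a discrepancy you inherit from the statement rather than an error in your argument.
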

\begin{proof}
Except if stated otherwise, the notation $\langle \cdot,  \cdot\rangle$ will denote the scalar product on $L^2(S_0^2T^*M)$ in the following proof.
Suppose $S_0\in C^{\infty}(M; S^2T^*M)$ is trace-free, then equation \eqref{eq:conformal} gives
$$\langle R^{\circ}(S_0),S_0\rangle=\Lambda_2^n\langle \pi_2^*(R^{\circ}(S_0)), \pi_2^*S_0\rangle_{L^2(SM)}.$$
But if we consider parallel vector fields $Y_i$ such as in \eqref{eq:Lyapunov}
\begin{equation}
\label{eq:ww}
\begin{split}\pi_2^*(R^{\circ}(S_0))(x,v)&=-\sum_{i=1}^n (S_0)_x(Y_i,R(Y_i,Y)Y)
\\&=\begin{cases}
-\tr(S_0)+\pi_2^*S_0=\pi_2^*S_0
\\
-\tr(S_0)+\pi_2^*S_0-3\pi_2^*(S_0\circ J)=\pi_2^*S_0-3\pi_2^*(S_0\circ J)
\\ -\tr(S_0)+\pi_2^*S_0-3\sum_{i=1}^3S_0(Y_{i+1},Y_{i+1})
\\ -\tr(S_0)+\pi_2^*S_0-3\sum_{i=1}^7S_0(Y_{i+1},Y_{i+1})
\end{cases}
\end{split}
\end{equation}
where the different lines correspond to the real, complex, quaternionic and octonionic cases respectively.
For the quaternion hyperbolic case, one can use \eqref{eq:vect}, the fact that $(Y_2,Y_3,Y_4)$ and $(J_1Y,J_2Y,J_3Y)$ are orthonormal basis of the same space to get 
$$\pi_2^*S_0-3\sum_{i=1}^3S_0(Y_{i+1},Y_{i+1})=\pi_2^*S_0-3\sum_{i=1}^3S_0(J_iY,J_iY)=\pi_2^*S_0-3\sum_{i=1}^3\pi_2^*(S_0\circ J_i).$$
This computation also shows that the previous quantity does not depend on the choice of a basis $(J_1,J_2,J_3)$ and is thus globally defined. For the octonionic case, it is not clear that one can write $\sum_{i=1}^7S_0(Y_{i+1},Y_{i+1})=\pi_2^*T(x,v)$ where $T$ is some explicit function of $S$. Since we will not need the Weitzenböck formula for this case, we will not try to simplify this term further.
This then gives
\begin{align*}\langle R^{\circ}(S_0),S_0\rangle=\Lambda_2^n\langle \pi_2^*R^{\circ}(S_0),\pi_2^*S_0\rangle_{L^2(SM)}&=\begin{cases}
\|S_0\|^2
\\
\|S_0\|^2-3\langle S_0\circ J,S_0\rangle
\\ \|S_0\|^2-3\sum_{i=1}^3\langle S_0\circ J_i,S_0\rangle
\\ \langle R^{\circ}(S_0),S_0\rangle.
\end{cases}
\end{align*}
A similar argument gives
$$ \pi_2^*(S_0\circ \mathrm{Ric})(x,v)=\sum_{i=1}^nS_0(R(Y_i,Y)Y_i,Y)=-\left(\sum_{i=1}^n\lambda_i^2\right)\pi_2^*S_0=\begin{cases}
-(n-1)\pi_2^*S_0
\\
-(n+2)\pi_2^*S_0
\\ 
-(n+8)\pi_2^*S_0
\\
-36\pi_2^*S_0
\end{cases}.$$
This gives, using \eqref{eq:Lyapunov},
$$\langle S_0\circ \mathrm{Ric} ,S_0\rangle=\Lambda_2^n\langle \pi_2^*(S_0\circ \mathrm{Ric}), \pi_2^* S_0\rangle_{L^2(SM)}=\begin{cases}
-(n-1)\|S_0\|^2
\\
-(n+2)\|S_0\|^2
\\ 
-(n+8)\|S_0\|^2
\\
-36\|S_0\|^2
\end{cases}. $$
Using \eqref{eq:Weit} gives
\begin{align*} 0\leq \langle& \nabla^* \nabla S_0,S_0\rangle -\langle R^{\circ}(S_0),S_0\rangle +\langle S_0\circ \mathrm{Ric} ,S_0\rangle.
\end{align*}
This implies the desired bounds by adding the previous estimates.
\end{proof}

\section{Proof of the main theorem}
\label{secHessian}
\subsection{Re-writing the Hessian}
The first step consists in using the fact that $g_0$ is a locally symmetric metric to obtain an explicit formula for $d^2\Phi(g_0)$. The computation of the Hessian is done by Flaminio in \cite[Proposition 5.1.1]{Fla} for real hyperbolic metrics and we adapt it to the other classes of locally symmetric spaces. We prove the following theorem.
\begin{theo}[Hessian]
\label{theoHess}
Let $(M^n,g_0)$ be a closed, locally symmetric and negatively curved manifold. Then, for $S\in C^{\infty}(M;S^2T^*M)\cap \mathrm{Ker}(D_{g_0}^*)$, one has
\begin{equation}
\label{eq:theoHess}
\begin{split}
\langle d^2\Phi(g_0)S,S\rangle_{L^2(S^2T^*M)} &=\mathrm{Var}(\pi_4^*Q(S)).
\end{split}
\end{equation}
where $Q:C^{\infty}(M;S^2T^*M) \cap \mathrm{Ker}(D_{g_0}^*)\to C^{\infty}(M;S^{4}T^*M) $ is a differential operator. More precisely, with the notations introduced in Section \ref{preli},
\begin{itemize}
\item If $g_0$ is real hyperbolic, then
\begin{equation}
\label{eq:QH^n}
Q(S)=L\big(\frac 1 4 \nabla^* \nabla S+\frac 12 (\mathrm{tr}_{g_0} S)g_0-\frac 1 2 S\big).
\end{equation}
\item If $g_0$ is complex hyperbolic, then
\begin{equation}
\label{eq:QCH^n}
Q(S)=\frac 1 4 L(\nabla^* \nabla S)+\frac 1 2 \big({-}L(S)+ (\mathrm{tr}_{g_0} S)L(g_0)+L(S\circ J)\big)-\frac 1 8 D_{g_0}^JD_{g_0}^JS.
\end{equation}
\item If $g_0$ is quaternionic hyperbolic, then \footnote{Again, it is a statement of the theorem that the formula below is defined globally and not just locally.}
\begin{equation}
\label{eq:QHH^n}
Q(S)=\frac 1 4 L(\nabla^* \nabla S)+\frac 1 2 \big({-}L(S)+ (\mathrm{tr}_{g_0} S)L(g_0)+\sum_{i=1}^3L(S\circ J_i)\big)-\frac 1 8 \sum_{i=1}^3D_{g_0}^{J_i}D_{g_0}^{J_i}S.
\end{equation}
\item If $g_0$ is octonionic hyperbolic, then  
\begin{equation}
\label{eq:OH^n}
Q(S)=L\big(\frac 7{24}\nabla^*\nabla S+\frac 16 S+\frac 1 3 \tr(S)g_0-\frac 16 R^{\circ}(S)\big)-\frac 1 {24}\mathrm{Sym}(R^{\circ}(\nabla^2 S)) .
\end{equation}
\end{itemize}
\end{theo}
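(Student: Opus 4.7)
The plan is to follow Flaminio's strategy and generalize it to the remaining rank-one locally symmetric spaces. I would consider a smooth one-parameter family $(g_\lambda)_{\lambda \in (-\varepsilon,\varepsilon)}$ with tangent vector $S \in \mathrm{Ker}(D_{g_0}^*)$ and compute the second variation of $\Phi$ at $g_0$. Since $g_0$ is a critical point of $\Phi$, only the Hessian survives. The key general fact from thermodynamic formalism is that at a metric where $\mu_{\mathrm{BM}} = \mu_{\mathrm{Liou}}$ (which holds for locally symmetric $g_0$, as recalled in Section \ref{locsym}), the second derivative of $\mathrm{Ent}_{\mathrm{top}}-\mathrm{Ent}_{\mathrm{Liou}}$ takes the form of a variance: $\frac{d^2}{d\lambda^2}|_{\lambda=0}\Phi(g_\lambda) = \mathrm{Var}_{\mu_{\mathrm{Liou}}^{g_0}}(f) = \langle \Pi f, f\rangle - \langle f,1\rangle_{L^2}^2$, where $f$ is the first variation of the (suitably conjugated) unstable Jacobian potential $-J^u_{g_\lambda}$ pulled back to $SM_{g_0}$.

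The heart of the proof is to identify $f$ as $\pi_{m(g_0)}^* Q(S)$ for the operator $Q$ announced in each case. To do this I would use the Riccati formalism: the unstable bundle is governed by a matrix equation of the form $XU + U^2 + R = 0$ on $\mathcal N$, so $J^u = \mathrm{tr}(U)$. Differentiating in $\lambda$ yields a linear first-order ODE along the flow for $\dot U$ with source $\dot R$, the first variation of the curvature operator; this source is explicitly a second-order differential operator in $S$. Using that $g_0$ is locally symmetric, one solves for $\dot U$ by exploiting the explicit Lyapunov decomposition: parallel frames $(Y_1,\ldots,Y_n)$ with $R(Y,Y_j)Y = -\lambda_j^2 Y_j$ where the $\lambda_j$ are listed in Section \ref{locsym}. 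Substituting and tracing produces a concrete function on $SM$ which, via the isomorphism \eqref{eq:iso} and the decomposition \eqref{eq:tensordecomp}, lifts to a well-defined symmetric tensor of degree $m(g_0)$.

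The choice of degree $m(g_0)$ reflects the Lyapunov spectrum: in the real hyperbolic case all non-zero exponents equal $1$, so only $\Omega_0$ and $\Omega_2$ contributions appear and $m(g_0)=2$, recovering Flaminio's formula \eqref{eq:QH^n}. In the complex case the extra exponent $2$ along $JY$ (justified by \eqref{eq:v,Jv}) forces $\Omega_4$ to enter, and $Q(S)$ acquires the $J$-twisted terms $L(S\circ J)$ and $D_{g_0}^J D_{g_0}^J S$ defined in \eqref{eq:D^J1}; the global nature of these terms is clear because $J$ is globally defined. For the quaternionic case one uses \eqref{eq:curvquater} and the globally defined vector bundle $Q = \mathrm{Span}(J_1,J_2,J_3)$: each local piece $L(S\circ J_i) + D_{g_0}^{J_i}D_{g_0}^{J_i}S$ depends on the choice of local frame of $Q$, but the sum is invariant under orthogonal change of frame (as on the level of the Weitzenböck term, cf.\ the argument in the proof of Lemma \ref{Weitzenböck}), hence global. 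In the octonionic case, since there is no available triple of local complex structures with the right curvature property, one instead uses the Cayley-line structure \eqref{eq:CayleySpan}: the $7$-dimensional direction orthogonal to $\mathbb R Y$ inside $\mathrm{Cay}(v)$ plays the role of the $J_i Y$-directions, and the term $\mathrm{Sym}(R^\circ(\nabla^2 S))$ appears as the intrinsic globalization.

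The main obstacle will be the explicit identification of $Q$, especially in the non-real cases, because one must keep track of how the perturbation of curvature $\dot R$ decomposes along the Lyapunov eigenspaces, then solve the Riccati linearization, then recognize the trace of the solution as the pullback of a tensor of the predicted form. The delicate bookkeeping is controlled by: (i) the relation \eqref{eq:XandD} between $X$ and $D_{g_0}$, which converts flow-integrations into the tensorial operator $D_{g_0}$ and its $J$-twist; (ii) the commutation relations \eqref{eq:comm}--\eqref{eq:comm3} in the complex case (and their quaternionic analogues), which allow one to collapse terms like $X^2$, $XH$, $H^2$ into a symmetrized second covariant derivative plus lower-order Weitzenböck-type curvature contributions; and (iii) verifying that the boundary terms involving $\langle f,1\rangle_{L^2}$ produce exactly the subtraction appearing in \eqref{eq:theoHess}. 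Once these algebraic identities are checked, the computations in the complex and quaternionic settings are parallel; the octonionic case requires the separate Cayley-line argument but follows the same pattern.
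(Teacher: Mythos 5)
Your proposal follows essentially the same route as the paper: starting from Flaminio's variance formula for the second variation, linearizing the Riccati equation along parallel Lyapunov frames, solving for the trace of $\nabla_\xi U$ modulo coboundaries, and then identifying the resulting potential as $\pi_{m(g_0)}^*Q(S)$ case by case, with the globalization arguments for the quaternionic frame and the Cayley-line substitute in the octonionic case exactly as the paper does. The plan is sound and all the key ingredients are correctly identified; the only thing separating it from a complete proof is actually carrying out the bookkeeping that produces the precise coefficients in \eqref{eq:QH^n}--\eqref{eq:OH^n}, which you have flagged but not executed.
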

\begin{rem}
    Note that using \eqref{eq:Pi2}, the Hessian in \eqref{eq:theoHess} can be rewritten using the operator $\Pi_4$. This is what we will do in Subsection \ref{stab}.
\\ Note that for the real hyperbolic case, $Q(S)$ identifies with a symmetric $2$-tensor as 
$$\pi_4^*Q(S)=\pi_2^*\big(\frac 1 4 \nabla^* \nabla S+\frac 12 (\mathrm{tr}_{g_0} S)g_0-\frac 1 2 S\big).$$
This explains why the proof of the solenoidal injectivity is easier for this case.
\end{rem}
\begin{proof}
Consider a smooth path of metrics $(g_{\epsilon})$ and write $S=\partial_{\epsilon}g_{\epsilon}|_{\epsilon=0}$. 
First, we can identify all sphere bundles $S_gM$ by the pullback by $\Psi_g: SM\to S_gM$ defined by $\Psi_g(x,v):=(x,v/|v|_g)$. Next, for $\epsilon$ small enough, let $\Psi_{\epsilon}\in C^{\alpha}(SM,SM)$ be the \emph{normal $(g_\epsilon,g_0)$-Morse correspondance} (see \cite[Section 3.2]{Fla} for a precise definition) which gives an \emph{orbit equivalence}:
\begin{equation}
\label{eq:homeo}
\Psi_{\epsilon}(\varphi_t(z))=\varphi^{\epsilon}_{\kappa_{\epsilon}(z,t)}(\Psi_{\epsilon}(z))
%\quad d\Psi_{\epsilon}X(z)=a_{\epsilon}(z)X_{\epsilon}(\Psi_{\epsilon}(z))
\end{equation}
where the index $\epsilon$ indicates that we consider the corresponding objects for the metric $g_\epsilon$ and $\kappa_{\epsilon}$ is a time rescaling. 
Following Flaminio, we define
$$\bar \gamma : (-\epsilon_0,\epsilon_0)\times \mathbb R \mapsto  \bar\gamma (\epsilon, t):= \pi(\Psi_{\epsilon}(\varphi_tv)),$$
where $\pi:SM\to M$ is the projection.
This means that $\bar\gamma(\epsilon,\cdot)$ is a $g_{\epsilon}$ (non normalized) geodesic. We define $Y=\tfrac{d\bar \gamma}{dt}$ and $\xi=\tfrac{d\bar \gamma}{d\epsilon}$. Define $U(t,\epsilon):=U_{g_{\epsilon}}(\Psi_{\epsilon}(\varphi_tv))$ where $U_{g_\epsilon}(v)$ is the second fundamental form of the stable horosphere for the metric $g_\epsilon$ at $\pi_\epsilon(v)\in M$, seen a $(1,1)$-tensor. Note that, as already noticed in \cite[Page 582]{Fla} and with the notations of \eqref{eq:Lyapunov}, along a geodesic, one has $UY_j=-\lambda_jY_j$.
 Our starting point is \cite[Proposition A,  $(C_3)$, Lemma 3.3.11, Lemma 4.2.1]{Fla} which gives\footnote{We use Pesin's entropy formula to simplify the topological entropy appearing in \cite[Proposition A,  $(C_3)$]{Fla} with the sum of negative Lyapunov exponents appearing in \cite[Lemma 3.3.11]{Fla}}:
\begin{equation}
\label{eq:debut}
\frac{d^2}{d\epsilon^2}\Phi(g_{\epsilon})|_{\epsilon=0}=\mathrm{Var}_{\mu_{\mathrm{Liou}}}^{g_0}(\tr(\nabla_{\xi}U)).
\end{equation}
Now, the goal is to write more explicitly $\tr(\nabla_{\xi}U)$ as a function $V(S)$ of the infinitesimal variation $S=\partial_\epsilon|_{\epsilon=0}g_{\epsilon}$. 
We now consider parallel fields $Y_1=Y,Y_2,\ldots, Y_n$ as in \eqref{eq:Lyapunov}. 
The Riccati equation satisfied by $U_g$ translates into a differential equation satisfied on the diagonal entries $B_j:=\langle (\nabla_{\xi}U)Y_j,Y_j\rangle$. This is the content of \cite[Lemma 4.3.1]{Fla} and we have\footnote{We are using a different convention for the sign of $R$, hence the sign difference with \cite[Lemma 4.3.1]{Fla}.}  :
\begin{equation}
\label{eq:Bj}
\mathcal L_YB_j-2\lambda_j B_j=-S(Y,Y)\lambda^2_j-g_0\left(\frac{\partial R_{\epsilon}}{\partial\epsilon}|_{\epsilon=0}(Y,Y_j)Y,Y_j \right).
\end{equation}
We sum over $j$ to retrieve the trace:
\begin{equation}
\label{eq:eqdiff}
\sum_{j=1}^n B_j=\tr(\nabla_{\xi}U)\approx \frac 12\left(\sum_{j=2}^n \lambda_j\right)S(Y,Y)+\frac 12\sum_{j=1}^n \mu_j g_0\left(\frac{\partial R_{\epsilon}}{\partial\epsilon}|_{\epsilon=0}(Y,Y_j)Y,Y_j \right),
\end{equation}
where $\mu_1=1$ and $\mu_j=\lambda_j^{-1}$ for $j>1$.
Here, $\approx$ denotes the fact that the two functions are cohomologous, i.e., their difference is of the form $Xu$ where $X$ is the geodesic vector field associated to $g_0$ and $u\in C^{\infty}(SM)$ and we noted that the terms $\mathcal L_Y B_j$ are co-boundaries. We note that \eqref{eq:Bj} for $j=1$ gives that $g_0\left(\frac{\partial R_{\epsilon}}{\partial\epsilon}|_{\epsilon=0}(Y,Y)Y,Y \right)\approx 0$ (recall from \eqref{eq:Lyapunov} that $\lambda_1=0$) and we can thus add it in the right hand side. Moreover, we can compute the diagonal entry $B_1$:
$$B_1=\langle (\nabla_{\xi}U)Y,Y\rangle=\langle \xi \underbrace{U(Y)}_{=0}-U(\nabla_\xi Y),Y\rangle=-\langle U(\nabla_\xi Y),Y\rangle.  $$
Now, $0=\xi\langle Y,Y\rangle =2\langle Y, \nabla_\xi Y\rangle$. In other words, $\nabla_\xi Y $ is orthogonal to $Y$ and thus so is $U(\nabla_\xi Y)$ because the action of $U$ is diagonal. In particular, $B_1=0$ and it can be added to the left hand side without changing its value.

To understand the second term in the right hand side of \eqref{eq:eqdiff}, we need to compute the variation of the $(3,1)$ curvature
tensor defined in \eqref{eq:Rg} with respect to the metric. By \cite[Theorem 1.174]{Bes}, we have:
$$\frac{\partial R_{\epsilon}}{\partial\epsilon}|_{\epsilon=0}S(X,Y)Z=(\nabla_Y{\nabla'_g}S)(X,Z)-(\nabla_X{\nabla'_g}S)(Y,Z) $$
where $\nabla$ denotes the covariant derivative (for the metric $g_0$) on symmetric tensors and ${\nabla'_g}$ is the variation of the Levi Civita tensor which is characterized by
$$g_0({\nabla'_g}S(X,Y),Z):=g_0\left(\frac{\nabla^{g_{\epsilon}}_XY}{\partial\epsilon}|_{\epsilon=0}, Z \right)=\frac 12(\nabla_XS(Y,Z)+\nabla_YS(X,Z)-\nabla_ZS(X,Y)), $$
where $S=\partial_\epsilon|_{\epsilon=0}g_{\epsilon}.$
For the sake of notations, we will denote by $C_{g_0}$ the $(3,0)$ tensor field
$$C_{g_0}S(X,Y,Z):=g_0({\nabla'_g}S(X,Y),Z).$$ 
We can now compute the second term in \eqref{eq:eqdiff}:
\begin{align*}
\sum_{j=2}^n \mu_j g_0\left(\frac{\partial R_{\epsilon}}{\partial\epsilon}|_{\epsilon=0}(Y,Y_j)Y,Y_j \right)=\sum_{j=2}^n \mu_j\left(\nabla_{Y_j}(C_{g_0}S)(Y,Y,Y_j)-\nabla_{Y}(C_{g_0}S)(Y_j,Y,Y_j) \right).
\end{align*}
Using the fact that all fields are parallel, we get that $\nabla_{Y}(C_{g_0}S)(Y_j,Y,Y_j)$ is a coboundary and thus
$$ \sum_{j=1}^n \mu_j g_0\left(\frac{\partial R_{\epsilon}}{\partial\epsilon}|_{\epsilon=0}(Y,Y_j)Y,Y_j \right)\approx \sum_{j=1}^n \mu_j\nabla_{Y_j}(C_{g_0}S)(Y,Y,Y_j).$$
The first term can be expanded using the variation of the Levi Civita connection:
$$\nabla_{Y_j}(C_{g_0}S)(Y,Y,Y_j)=\nabla_{Y_j,Y}^2S(Y,Y_j)-\frac 12\nabla_{Y_j,Y_j}^2S(Y,Y),$$
where for a symmetric tensor $S$ and vector fields $X_1,X_2$, one has
\begin{equation}
\label{eq:nabla2}
    \nabla^2_{X_1,X_2} S:=\nabla_{X_1}(\nabla_{X_2}S)-\nabla_{(\nabla_{X_1}X_2)}S. \end{equation}
We use the Ricci identity \cite[Equation $(1.21)$]{Bes} and \eqref{eq:Lyapunov},  to re-write the first term:
\begin{align*} \nabla_{Y_j,Y}^2S(Y,Y_j)&=\nabla_{Y,Y_j}^2S(Y,Y_j)+S(R(Y_j,Y)Y,Y_j)+S(R(Y_j,Y)Y_j,Y)
\\&=\nabla_{Y,Y_j}^2S(Y,Y_j)-\lambda_j^2S(Y,Y).
\end{align*}
In total, one has using \eqref{eq:eqdiff}
\begin{align*}
V(S):=\tr(\nabla_{\xi}U)&= \frac 12\left(\sum_{j=2}^n \lambda_j\right)S(Y,Y)-\frac 1 4 \sum_{j=1}^n \mu_j\nabla_{Y_j,Y_j}^2S(Y,Y)\\&+\frac 1 2\sum_{j=1}^n \mu_j\nabla_{Y,Y_j}^2S(Y,Y_j)
+\frac 1 2\sum_{j=2}^n \lambda_jS(Y_j,Y_j)-\frac 1 2\sum_{j=2}^n\lambda_jS(Y,Y).
\end{align*}
That is, one has
\begin{equation}
\label{eq:V(S)}
V(S)=-\frac 1 2 \pi_2^*S-\frac 1 4 \sum_{j=1}^n \mu_j\left(\nabla_{Y_j,Y_j}^2S(Y,Y)-2\nabla_{Y,Y_j}^2S(Y,Y_j)\right)+\frac 1 2\sum_{j=1}^n \mu_j^{-1}S(Y_j,Y_j).
\end{equation}
\textbf{Quotient of $\mathbb R \mathbb H^n$.}
Suppose first that $g_0$ is a hyperbolic metric, then this means that $\mu_j\equiv 1$. In particular, one gets recalling \eqref{eq:star}
$$-\frac 1 4 \sum_{j=1}^n \mu_j\nabla_{Y_j,Y_j}^2S(Y,Y) =-\frac 1 4 \sum_{j=1}^n \nabla_{Y_j,Y_j}^2S(Y,Y)=\frac 1 4\pi_2^*(\nabla^*\nabla S),$$
as well as 
$$ -\frac 1 2 \sum_{j=1}^n \mu_j\nabla_{Y,Y_j}^2S(Y,Y_j)= -\frac 1 2 \sum_{j=1}^n \nabla_{Y,Y_j}^2S(Y,Y_j)=\frac 1 2 \pi_2^*(D_{g_0}D_{g_0}^*S)\approx0$$
using \eqref{eq:XandD}.
Finally, one has 
$$ \frac 1 2\sum_{j=1}^n \mu_j^{-1}S(Y_j,Y_j)=\frac 1 2\sum_{j=1}^nS(Y_j,Y_j)=\frac 1 2\tr(S).$$
Putting everything together gives $V(S)=\pi_4^*Q(S)$ where $Q$ is defined in \eqref{eq:QH^n}. Note that $Q$ is  $4$-tensor which identifies, using \eqref{eq:adjointt}, to the symmetric $2$-tensor $T$ used by Flaminio in \cite[Proof of Proposition 1.3.3]{Fla}.

\textbf{Quotient of $\mathbb C \mathbb H^n$.}
Suppose now that $g_0$ is a quotient of $\mathbb C \mathbb H^n$, then $\mu_j=1-\frac 1 2\delta_{j,2}$. Moreover, $Y_2$ is given by $Y_2=JY$ where $J$ is given by the complex structure. Notice that in contrast to the $Y_i$ that are only defined along the geodesic given by $(x,v)$, the operator $J$ is globally defined. We first see that 
$$ \frac 1 2\sum_{j=1}^n \mu_j^{-1}S(Y_j,Y_j)=\frac 1 2\sum_{j=1}^nS(Y_j,Y_j)+\frac 1 2 S(JY,JY)=\frac 1 2\tr(S)+\frac 12 \pi_2^*(S\circ J).$$
Next, we consider 
\begin{align*}-\frac 1 4 \sum_{j=1}^n \mu_j\nabla_{Y_j,Y_j}^2S(Y,Y) &=-\frac 1 4 \sum_{j=1}^n \nabla_{Y_j,Y_j}^2S(Y,Y)+\frac 1 8 \nabla^2_{JY,JY}S(Y,Y)
\\&=\frac 1 4\pi_2^*(\nabla^*\nabla S)+\frac 1 8 \nabla^2_{JY,JY}S(Y,Y),
\end{align*}
where we used \eqref{eq:D^J1}.
%Recall that
%$$\nabla^2_{JY,JY}S(Y,Y):=\nabla_{JY}(\nabla_{JY}S(Y,Y))-\nabla_{\nabla_{JY}JY}S(Y,Y). $$
%Now, we can use the fact that $J$ is parallel to obtain
%$$ \nabla_{JY}JY=J(\nabla_{JY}Y)=J(\nabla_{Y}JY+[JY,Y])=J(J\nabla_Y Y+ J[Y,Y])=0$$
%where we used the fact that $Y$ defines a geodesic.
 Now, using $J^2=-\mathrm{Id}$ gives
$$\nabla^2_{JY,JY}S(Y,Y)=\pi_4^*\big((\nabla \nabla (S\circ J))\circ J\big)=\pi_4^*\big(\mathrm{Sym}((\nabla \nabla (S\circ J))\circ J)\big). $$
The pre-composition by $J$ commutes with the symmetrization so 
$$\mathrm{Sym}((\nabla \nabla (S\circ J))\circ J)=\mathrm{Sym}(\nabla \nabla (S\circ J))\circ J. $$
Now, let $T\in C^{\infty}(M;S^2T^*M)$ be a symmetric $2$-tensor and let $X_1,X_2,X_3,X_4$ be tangent vectors, we compute:
\begin{align*} \mathrm{Sym}(\nabla \nabla T)(X_1,X_2,X_3,X_4)&=\frac 1 4 \sum_{i=1}^4 \nabla_{X_i}\mathrm{Sym}(\nabla T)(\hat X_i)=\frac 1 4 \sum_{i=1}^4 \nabla_{X_i}D_{g_0}T(\hat X_i)
\\&=\mathrm{Sym}(\nabla D_{g_0}T)= D_{g_0}D_{g_0}T(X_1,X_2,X_3,X_4).
\end{align*}
Here $\hat X_i$ denotes the $3$-tuple where we left out $X_i$. In particular one has, using \eqref{eq:XandD} and \eqref{eq:D^J1},
\begin{align*}-\frac 1 4 \sum_{j=1}^n \mu_j\nabla_{Y_j,Y_j}^2S(Y,Y) &=\frac 1 4\pi_2^*(\nabla^*\nabla S)+\frac 1 8\pi_4^*(D_{g_0}D_{g_0}(S\circ J)\circ J)
\\&=\frac 1 4\pi_2^*(\nabla^*\nabla S)-\frac 1 8\pi_4^*(D_{g_0}^JD_{g_0}^JS).
\end{align*}
The last term is given by 
\begin{align*}-\frac 1 2 \sum_{j=1}^n \mu_j\nabla_{Y,Y_j}^2S(Y,Y_j)&= -\frac 1 2 \sum_{j=1}^n \nabla_{Y,Y_j}^2S(Y,Y_j)-\frac 1 4 \nabla_{v,Jv}S(v,Jv)
\\&=\frac 1 2 \pi_2^*(D_{g_0}D_{g_0}^*S)-\frac 1 4 \nabla_{v,Jv}S(v,Jv)\approx-\frac 1 4 \nabla_{v,Jv}S(v,Jv).
\end{align*}
Now, using the fact that $J$ is parallel, we see that  $ \nabla_{v,Jv}S(v,Jv)=Y(\nabla S(Jv,v,Jv))$ is a co-boundary and we can thus ignore this term when computing the Hessian. Finally, we use \eqref{eq:V(S)} to obtain \eqref{eq:QCH^n}.

\textbf{Quotient of $\mathbb H \mathbb H^n$.}
The computation follows from the one for $\mathbb C \mathbb H^n$. Indeed, this time
$$ \sum_{j=1}^n \mu_j^{-1}S(Y_j,Y_j)=\sum_{j=1}^nS(Y_j,Y_j)+ \sum_{i=1}^3S(Y_{i+1},Y_{i+1})=\tr(S)+\pi_2^*(\sum_{i=1}^3S\circ J_i),$$
by \eqref{eq:vect}. This also shows that is term is independent of the choice of the base $(J_1,J_2,J_3)$ and thus globally defined. The last term $-\frac 1 2 \sum_{j=1}^n \mu_j\nabla_{Y,Y_j}^2S(Y,Y_j)$ is still a co-boundary by the exact same computation, using the fact that the $Y_i$ are parallel. We are left with understanding the middle term: 
\begin{align*}-\frac 1 4 \sum_{j=1}^n \mu_j\nabla_{Y_j,Y_j}^2S(Y,Y) &=-\frac 1 4 \sum_{j=1}^n \nabla_{Y_j,Y_j}^2S(Y,Y)+\frac 1 8 \sum_{j=1}^3\nabla_{Y_{j+1},Y_{j+1}}S(Y,Y)
\\&=\frac 1 4\pi_2^*(\nabla^*\nabla S)+\frac 1 8 \sum_{j=1}^3\nabla_{J_{j+1}Y,J_{j+1}Y}S(Y,Y).
\end{align*}
The last equality again follows from \eqref{eq:vect}. Indeed, there exists $(a_{i,j})_{1\leq i,j\leq 3}\in \mathbb R^9$ such that
$$Y_{j+1}=\sum_{i=1}a_{i,j}J_iY, \quad A=(a_{i,j})_{1\leq i,j\leq 3}\in \mathrm{O}_3(\mathbb R). $$
This means that 
\begin{align*}
 \sum_{j=1}^3\nabla_{Y_{j+1},Y_{j+1}}S(Y,Y)&= \sum_{j=1}^3\nabla_{\sum_{i=1}a_{i,j}J_iY,\sum_{i=1}a_{i,j}J_iY}S(Y,Y)
 \\&=\sum_{i,k,j}a_{i,j}a_{k,j}\nabla_{J_iY,J_kY}S(Y,Y)=\sum_i \nabla_{J_iY,J_iY}S(Y,Y).
\end{align*}
 We obtain, similarly to the complex hyperbolic case the fact that 
 $$\mathrm{Sym}\left(\sum_{j=1}^3\nabla_{J_{j+1}Y,J_{j+1}Y}S(Y,Y)\right)= -\pi_4^*\left(\sum_{i=1}^3D_{g_0}^{J_i}D_{g_0}^{J_i}S\right).$$
Note that the argument did not use the fact that $J$ is parallel. Combining everything gives \eqref{eq:QHH^n}.

\textbf{Quotient of $\mathbb O\mathbb H^n$.} This time the "trace" term in \eqref{eq:V(S)} gives
\begin{align*} \frac 1 2\sum_{j=1}^n \mu_j^{-1}S(Y_j,Y_j)&=\frac 1 2 \tr(S)+\frac 1 2 \sum_{i=1}^7S(Y_{i+1},Y_{i+1})
\\&= \frac 1 2 \tr(S)+\frac 16\left(\pi_2^*S-\tr(S)-\pi_2^*R^{\circ}(S) \right)
\\&=\frac 1 3 \tr(S)+\frac 16 \pi_2^*S-\frac 16 \pi_2^*R^{\circ}(S).
\end{align*}
where we used \eqref{eq:ww}. 
The second to last term is still a coboundary and the remaining one is computed using using the fact that the $Y_j$ are parallel
\begin{align*}-\frac 1 4 \sum_{j=1}^n \mu_j\nabla_{Y_j,Y_j}^2S(Y,Y) &=-\frac 1 4 \sum_{j=1}^n \nabla_{Y_j,Y_j}^2S(Y,Y)+\frac 1 8 \sum_{j=1}^7\nabla_{Y_{j+1},Y_{j+1}}S(Y,Y)
\end{align*}
For a $4$-tensor $T$, we extend the operator $R^{\circ}$ in the following way:
\begin{equation}
\label{eq:RR}
R^{\circ}(T)(X_1,X_2,X_3,X_4)=-\sum_{i=1}^n T(R(e_i,X_1)X_2,e_i,X_3,X_4). 
\end{equation}
Mimicking the computation of \eqref{eq:ww}, we obtain for $T=\nabla^2S$,
\begin{align*}3\sum_{j=1}^7\nabla_{Y_{j+1},Y_{j+1}}S(Y,Y)&=\pi_4^*(\nabla^2 S)-\pi_2^*(\tr \nabla^2 S)-\pi_4^*(R^{\circ}(\nabla^2 S))
\\&=\pi_4^*(D_{g_0}D_{g_0} S)+\pi_2^*(\nabla^*\nabla S)-\pi_4^*(\mathrm{Sym}(R^{\circ}(\nabla^2 S))). \end{align*}
The first term is a co-boundary which we can ignore in the computation of the Hessian. Summing the previous computations gives
$$V(x,v)=\pi_2^*\big(\frac 1 3 \tr(S)g_0+\frac 16 S-\frac 16 R^{\circ}(S)+\frac 7{24}\nabla^*\nabla S\big)-\frac 1{24}\pi_4^*(\mathrm{Sym}(R^{\circ}(\nabla^2 S)))$$
which is \eqref{eq:OH^n}, using the relation $\pi_{m+2}^*L(T)=\pi_m^*T$ for any symmetric $m$-tensor.
\end{proof}
\subsection{Stability estimate}
\label{stab}
In this subsection, we show that Theorem \ref{MainTheo} follows from the microlocal approach if $\Pi_{\mathrm{Ker}(D_{g_0}^*)}Q$ is \emph{solenoidal injective}. The solenoidal injectivity for real and complex hyperbolic metrics will be shown in Section \ref{sectionsinj}.
\begin{defi}
We say that an operator $V:C^{\infty}(M;S^2T^*M)\to C^{\infty}(M;S^mT^*M)$ is solenoidal injective if 
$$ \mathrm{Ker}(D_{g_0}^*)\cap \mathrm{Ker}(V)=\{0\}.$$
\end{defi}
We first prove that the averaging term in \eqref{eq:theoHess} is bounded by the volume term in \eqref{eq:stability}.
\begin{lemm}[Bounding the average term]
\label{averaging}
With the notation of Section \ref{secHessian}, there is a neighborhood $\mathcal U$ of $g_0$ in the $C^{5,\alpha}$-topology and $C_n>0$, such that if $g\in \mathcal U$ with $g-g_0\in \mathrm{Ker}(D_{g_0}^*)$, one has
$$\langle \pi_{4}^*Q(g-g_0), 1\rangle_{L^2(SM)}\leq C_n  |\mathrm{Vol}_g(M)-\mathrm{Vol}_{g_0}(M)|+C_n\|g-g_0\|_{C^{5,\alpha}}^2.$$
\end{lemm}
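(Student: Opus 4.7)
The plan is to first establish the exact linear identity
\begin{equation*}
\langle \pi_{m(g_0)}^* Q(g - g_0), 1\rangle_{L^2(SM)} = c_n \int_M \mathrm{tr}_{g_0}(g-g_0)\,d\mathrm{vol}_{g_0}
\end{equation*}
for some constant $c_n\in \mathbb R$ depending only on $(M,g_0)$, and then to compare $\int_M \mathrm{tr}_{g_0}(g-g_0)\,d\mathrm{vol}_{g_0}$ with $\mathrm{Vol}_g(M) - \mathrm{Vol}_{g_0}(M)$ via the first-order expansion of the volume functional; the $O(\|g-g_0\|^2)$ error in that expansion will directly produce the quadratic remainder in the lemma.

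For the identity, I would plug in the explicit formulas \eqref{eq:QH^n}--\eqref{eq:OH^n} and use the pullback relations $\pi_{m+2}^*\circ L = \pi_m^*$ and $\pi_{m+1}^*\circ D_{g_0}^{J_i} = (-1)^m H_i \circ \pi_m^*$ of \eqref{eq:H=JXJ} to rewrite $\pi_{m(g_0)}^* Q(S)$, with $S:=g-g_0$, as a sum on $SM$ of algebraic terms (linear combinations of $\pi_2^*S$, $\pi_2^*(S\circ J_i)$, and the scalar $\mathrm{tr}_{g_0}(S)$), the Laplacian term $\pi_2^*(\nabla^*\nabla S)$, the twisted derivative terms $H_i^2\pi_2^*S$, and, only in the octonionic case, the curvature piece coming from $\mathrm{Sym}(R^\circ(\nabla^2 S))$. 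The fiberwise formula $\int_{SM}\pi_2^*T\,d\mu_{\mathrm{Liou}} = c'_n \int_M \mathrm{tr}_{g_0}(T)\,d\mathrm{vol}_{g_0}$ reduces each algebraic piece to a multiple of $\int_M \mathrm{tr}(S)$ or $\int_M \mathrm{tr}(S\circ J_i)$; but since each $J_i$ is pointwise a $g_0$-isometry, $\{J_i e_k\}$ is orthonormal whenever $\{e_k\}$ is, and so $\mathrm{tr}(S\circ J_i) = \mathrm{tr}(S)$ as well.

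The two derivative-type contributions are then dispatched by integration by parts. First,
\begin{equation*}
\int_M \mathrm{tr}(\nabla^*\nabla S)\,d\mathrm{vol}_{g_0} = \langle \nabla^*\nabla S, g_0\rangle_{L^2} = \langle \nabla S, \nabla g_0\rangle_{L^2} = 0
\end{equation*}
because $\nabla g_0 = 0$. Second, using $H_i = [X, V_i]$ of Lemma \ref{lemmcommComplex} (and its quaternionic analogue) together with the facts that the geodesic flow $X$ preserves $\mu_{\mathrm{Liou}}$ and that each $V_i$, being a rotation on every fiber $S_xM$, preserves the round (hence the Liouville) measure, one gets $\int_{SM} H_i f\,d\mu_{\mathrm{Liou}} = 0$ for every $f\in C^\infty(SM)$; applying this to $f = H_i \pi_2^* S$ yields $\int_{SM} H_i^2\pi_2^*S\,d\mu_{\mathrm{Liou}}=0$. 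In the quaternionic case the $J_i,V_i,H_i$ are only locally defined, but the globally defined sum $\sum_i H_i^2\pi_2^*S$ still integrates to zero via a partition-of-unity argument on an open cover over which the $J_i$ trivialise. The octonionic term $\mathrm{Sym}(R^\circ(\nabla^2 S))$ is handled analogously by moving both derivatives off $S$ through integration by parts and invoking $\nabla R_{g_0}=0$.

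What survives is exactly $c_n \int_M \mathrm{tr}_{g_0}(g-g_0)\,d\mathrm{vol}_{g_0}$. Combining with the pointwise Taylor expansion $\sqrt{\det_{g_0}g} = 1 + \tfrac12 \mathrm{tr}_{g_0}(g-g_0) + O(|g-g_0|^2)$ integrated over $M$,
\begin{equation*}
\mathrm{Vol}_g(M) - \mathrm{Vol}_{g_0}(M) = \tfrac12 \int_M \mathrm{tr}_{g_0}(g-g_0)\,d\mathrm{vol}_{g_0} + O(\|g-g_0\|_{C^0}^2),
\end{equation*}
valid uniformly for $g$ in a $C^0$-neighborhood of $g_0$, and using $\|\cdot\|_{C^0}\le \|\cdot\|_{C^{5,\alpha}}$, produces the asserted bound with $C_n = 2|c_n|$. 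The main technical point, in my view, will be to verify the quaternionic cancellation $\int \sum_i H_i^2\pi_2^*S\,d\mu_{\mathrm{Liou}}=0$ in the presence of only locally-defined almost-complex structures; the real, complex, and octonionic cases all admit direct case-by-case calculations.
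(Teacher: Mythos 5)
Your overall strategy is sound and, for the real, complex and octonionic cases, it arrives at the same identity as the paper by a genuinely different route for the second--order terms. The paper never leaves the level of symmetric tensors on $M$: it pairs $Q(S)$ against $L(g_0)$, uses the conformality \eqref{eq:conformal}, and kills the twisted second--derivative terms by one integration by parts together with the pointwise identity $D_{g_0}^*L(g_0)=LD_{g_0}^*g_0+C_nD_{g_0}g_0=0$ (and, in the octonionic case, $R^{\circ}(L(g_0))=-36\,L(g_0)$ followed by $\nabla L(g_0)=0$). You instead work upstairs on $SM$ and kill $\int_{SM}H^2\pi_2^*S\,d\mu_{\mathrm{Liou}}$ by observing that $H=[X,V]$ is a commutator of two Liouville--preserving vector fields, hence divergence--free. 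In the complex (K\"ahler) case this is correct and arguably cleaner, since it bypasses the conformal--factor bookkeeping for non--trace--free $4$--tensors; the remaining algebraic reductions ($\mathrm{tr}(S\circ J_i)=\mathrm{tr}(S)$, the fiberwise average formula, the volume expansion) all match the paper's.

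The one genuine gap is exactly where you suspected it: the quaternionic case. The commutation relation $[X,V_i]=H_i$ of Lemma \ref{lemmcommComplex} is proved (and is only true) under the K\"ahler hypothesis $\nabla J=0$; on a quaternion--K\"ahler quotient the individual $J_i$ are not parallel (only the bundle $Q=\mathrm{Span}(J_1,J_2,J_3)$ is), so $[X,V_i]$ acquires correction terms involving $\nabla J_i$ and you cannot conclude that $H_i$ is Liouville--divergence--free by this argument. Your partition--of--unity remark handles the global/local issue but not this one, which is pointwise. The paper sidesteps it precisely because its identity $D_{g_0}^*L(g_0)=0$ and the computation $\langle (D_{g_0}D_{g_0}(S\circ J_i))\circ J_i,L(g_0)\rangle=\langle D_{g_0}(S\circ J_i),D_{g_0}^*L(g_0)\rangle$ make no use of $\nabla J_i=0$; to close your version you would either need to prove directly that each $H_i$ (equivalently $\sum_i(H_iu)H_i$) is divergence--free for the Liouville measure, or fall back on the tensorial integration by parts on $M$ for that case.
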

\begin{proof}
In the following proof, except if stated otherwise, the notation $\langle \cdot, \cdot \rangle$ will denote the scalar product induced on symmetric tensors.
If $g_0$ is a hyperbolic metric, then from \eqref{eq:QH^n}, one has
$$Q(S)=-\frac 1 2 S+\frac 1 4 \nabla^* \nabla S+\frac 12 (\mathrm{tr}_{g_0} S)g_0.$$
This means that using \eqref{eq:conformal}, one has
$$\langle \pi_2^*Q(S),1\rangle_{L^2(SM)}=\langle \pi_2^*Q(S),\pi_2^*g_0\rangle_{L^2(SM)}=C_n\langle Q(S),g_0\rangle_{L^2(S^2T^*M)}, $$
for some constant $C_n>0$.
Now, we have
\begin{align*}
\left\langle -\frac 1 2 S+\frac 1 4 \nabla^* \nabla S+\frac 12 (\mathrm{tr}_{g_0} S)g_0,g_0\right\rangle &=-\frac 1 2 \langle S,g_0\rangle +\frac 1 4\langle \nabla^*\nabla S, g_0\rangle+\frac 1 2\langle (\mathrm{tr}_{g_0} S)g_0,g_0\rangle
\\&=-\frac 1 2 \langle S,g_0\rangle+\frac n 2\langle \mathrm{tr}_{g_0} S,1\rangle
\\&=-\frac 1 2 \langle S,g_0\rangle+\frac n 2 \langle S,g_0\rangle=\frac {n-1} 2\langle S,g_0\rangle .
\end{align*}
Here, we used that $\nabla g_0=0$. Now, we follow the argument of \cite[Proposition 3.5]{GuKnLef} to obtain:
\begin{equation}
\label{eq:volume}
\mathrm{Vol}_g(M)-\mathrm{Vol}_{g_0}(M)=\frac 1 2 \langle g-g_0,g_0\rangle+O(\|g-g_0\|_{C^{5,\alpha}}^2),
\end{equation}
which gives the desired inequality.

Now, suppose that $g_0$ is complex hyperbolic, then \eqref{eq:QCH^n} gives 
$$Q(S)=\frac 1 4 L(\nabla^* \nabla S)+\frac 1 2 \big({-}L(S)+ (\mathrm{tr}_{g_0} S)L(g_0)+L(S\circ J)\big)+\frac 1 8 (D_{g_0}D_{g_0}(S\circ J))\circ J.$$
This time, we have using \eqref{eq:conformal}
$$\langle \pi_4^*Q(S),1\rangle_{L^2(SM)}=\langle \pi_4^*Q(S),\pi_4^*L(g_0)\rangle_{L^2(SM)}=\Lambda_4^n\langle Q(S),L(g_0)\rangle_{L^2(S^4T^*M)}.$$
We will write $Q(S)=L(T(S))-\tfrac 18 D_{g_0}^JD_{g_0}^J(S)$ for a symmetric $2$-tensor $T(S)$ defined in \eqref{eq:QCH^n}.
We note that for any symmetric $2$-tensor $T$, a direct computation yields
$$\langle L(T),L(g_0)\rangle=\langle T, \tr L(g_0)\rangle=\left(n+\frac {n+2}6\right)\langle T,g_0\rangle. $$
Here, we used the commutation relation between $\tr$ and its adjoint $L$ (see for instance \cite[Equation $(4.19)$ Chapter $4.1$]{GuMaz}) which gives
$$\tr L(g_0)=L(\tr(g_0))+\frac {n+2}6g_0=\left(n+\frac {n+2}6\right)g_0. $$
In particular, one has
\begin{align*}&\left\langle\frac 1 4 L(\nabla^* \nabla S)+\frac 1 2 \big({-}L(S)+ (\mathrm{tr}_{g_0} S)L(g_0)+L(S\circ J)\big), L(g_0)\right\rangle
\\&=C_n\left\langle -\frac 1 2 S+\frac 1 4 \nabla^* \nabla S+\frac 12 (\mathrm{tr}_{g_0} S)L(g_0)+\frac 12 S\circ J,g_0\right\rangle=C_n'\frac n 2 \langle S,g_0\rangle,
\end{align*}
where we used that $\langle S\circ J,g_0\rangle= \langle S, g_0\rangle $, did the same calculation as in the hyperbolic case and where $C_n, C_n'>0$ are some constants depending on $n$. In particular, the term $L(T(S))$ can be bounded just like in the first case.

We are left with bounding the term $ \langle (D_{g_0}D_{g_0}(S\circ J))\circ J, L(g_0)\rangle$. For this, we write
$$  \langle (D_{g_0}D_{g_0}(S\circ J))\circ J, L(g_0)\rangle= \langle D_{g_0}D_{g_0}(S\circ J), L(g_0\circ J)\rangle= \langle D_{g_0}(S\circ J), D_{g_0}^*L(g_0)\rangle,$$
we used the fact that $g_0\circ J=g_0$ as well as the fact that $L$ and $J$ commute. Now, we can use the commutation relation between $D_{g_0}^*$ and $L$ (see for instance \cite[Corollary 4.13]{GuMaz}) which gives, for some constant $C_n$,
$$ D_{g_0}^*L(g_0)=LD_{g_0}^*g_0+C_nD_{g_0}g_0=0.$$
This shows that this last term does not contribute to the average and concludes the proof in this case.

For the case of a quaternionic hyperbolic quotient, we see that the previous computation did not use the fact that $J$ was parallel. In particular, similar bounds for each $J_i$ hold and combining this with formula \eqref{eq:QHH^n} yields the result in this case.

For the octonionic case, the scalar products we are yet to bound are $\langle L(R^{\circ}(S)),L(g_0)\rangle$, $\langle \mathrm{Sym}(R^{\circ}(\nabla^2S)),L(g_0)\rangle $ as well as $\langle \mathrm{Sym}(\nabla^2S),L(g_0)\rangle$. For the first scalar product, we first note from \eqref{eq:Ricci} that $R^{\circ}$ is selfadjoint. Now,
$$
\langle L(R^{\circ}(S)),L(g_0)\rangle=C_n\langle S,R^{\circ}(g_0)\rangle,$$
for some constant $C_n>0$.
We use  \cite[Corollary 5.5]{Ruan} to get
\begin{equation}
\label{eq:Ruan}
R^\circ (g_0)(e_j,e_j) =-\sum_{i,j=1}^n g_0(R(e_i,e_j)e_j,e_i)=(-7\times 4-(16-8))=-36
\end{equation}
for an local orthonormal frame $(e_j)$. This means that $R^\circ (g_0)=-36g_0$. In particular, one gets
$$\langle L(R^{\circ}(S)),L(g_0)\rangle=C_n'\langle S, g_0\rangle,$$
for some $C_n'>0$. This term can be bounded using \eqref{eq:volume}. Next, 
\begin{align*}\langle \mathrm{Sym}(R^{\circ}(\nabla^2S)),L(g_0)\rangle_{L^2(S^2T^*M)}&=\langle \nabla^2 S,R^\circ(L(g_0))\rangle_{L^2(S^2T^*M)}.
\end{align*}
We now remark using \eqref{eq:Ruan} that $$\pi_4^*\big(R^\circ(L(g_0))\big)(x,v)=\pi_4^*\big(R^\circ(g_0\otimes g_0)\big)(x,v)=-36 \in \Omega_0.$$
From \eqref{eq:conformal}, this implies $R^\circ(L(g_0))=-36g_0$.
 This shows that this pairing vanishes and does not contribute to the average.
For the last term, we use the fact that $\mathrm{Sym}$ is an orthogonal projection to obtain
$$\langle \mathrm{Sym}(\nabla^2S),L(g_0)\rangle=\langle \nabla^2S,L(g_0)\rangle=\langle \nabla S, D_{g_0}^*L(g_0)\rangle =0 $$
and so this last term does not contribute to the average. 
\end{proof}
Note that up to here, using the computation of the Hessian \eqref{eq:theoHess}, Lemma \ref{averaging} and the coercive estimate \eqref{eq:coercive} gives
\begin{equation}
    \label{eq:first}
    \begin{split}
  \|\Pi_{\mathrm{Ker}(D_{g_0}^*)}Q(S)\|_{H^{-1/2}}^2   \leq C_n\langle d^2\Phi(g_0)S,S\rangle&+ C_n\big(|\mathrm{Vol}_g(M)-\mathrm{Vol}_{g_0}(M)|
  \\&+\|g-g_0\|_{C^{5,\alpha}}^2\big)^2.
  \end{split}
\end{equation}

Next, we show that the operator $\Pi_{\mathrm{Ker}(D_{g_0}^*)}Q$ is elliptic on solenoidal tensors.
\begin{prop}[Ellipticity]
\label{elliptic}
With the notations introduced in the section, the operator $\Pi_{\mathrm{Ker}(D_{g_0}^*)}Q$ is elliptic on solenoidal tensors in the sense that there exists $C>0$, such that for any $S\in \mathrm{Ker}(D_{g_0}^*) $, one has
$$C\|S\|_{H^{3/2}}^2\leq \|\Pi_{\mathrm{Ker}(D_{g_0}^*)}Q S\|^2_{H^{-1/2}}+\|\Pi_{\mathrm{ker}(\Pi_{\mathrm{Ker}(D_{g_0}^*)}Q)} S\|^2_{H^{3/2}} $$
where $\Pi_{\mathrm{ker}(\Pi_{\mathrm{Ker}(D_{g_0}^*)})}$ is the orthogonal projection on the kernel and $C>0$.
\end{prop}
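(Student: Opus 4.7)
The proof plan is to show that $\Pi_{\mathrm{Ker}(D_{g_0}^*)} Q$ is an elliptic pseudo-differential operator of order $2$ on the closed subspace of solenoidal symmetric $2$-tensors, and then to invoke standard elliptic Fredholm theory on the closed manifold $M$.

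First, I would observe that $Q$ is a differential operator of order $2$: this is immediate from the formulas \eqref{eq:QH^n}--\eqref{eq:OH^n}, whose leading terms are $\nabla^*\nabla S$, $L(\nabla^*\nabla S)$, $D_{g_0}^{J_i}D_{g_0}^{J_i}S$, and $\mathrm{Sym}(R^\circ(\nabla^2S))$. The projection $\Pi_{\mathrm{Ker}(D_{g_0}^*)}$ is a classical pseudo-differential operator of order $0$, realised via $\mathrm{Id}-D_{g_0}(D_{g_0}^*D_{g_0})^{-1}D_{g_0}^*$ (modulo finite-dimensional subspaces of smooth tensors). Therefore $P := \Pi_{\mathrm{Ker}(D_{g_0}^*)}Q$ is a pseudo-differential operator of order $2$ mapping solenoidal $2$-tensors to solenoidal $m(g_0)$-tensors.

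The core step is the verification of principal symbol injectivity: for every $(x,\xi)$ with $\xi\neq 0$, the map $\sigma_2(P)(x,\xi): \mathrm{Ker}\,\sigma_1(D_{g_0}^*)(x,\xi) \to \mathrm{Ker}\,\sigma_1(D_{g_0}^*)(x,\xi)$ should be injective. In the real hyperbolic case this is immediate since $\sigma_2(Q)(\xi) = \tfrac{1}{4}|\xi|^2\mathrm{Id}$, a positive scalar operator. In the complex and quaternionic cases, $\sigma_2(Q)$ combines the elliptic contribution $\tfrac{1}{4}|\xi|^2 L$ with $-\tfrac18 \sum_i \sigma_2(D_{g_0}^{J_i}D_{g_0}^{J_i})$, whose symbol involves symmetric products built from the vectors $J_i\xi$; in the octonionic case the curvature term $\mathrm{Sym}(R^\circ(\nabla^2\cdot))$ adds an algebraic contribution which is subdominant to the elliptic part $\tfrac{7}{24}L(\nabla^*\nabla S)$. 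A clean way to organise these symbol computations is to transplant them to $SM$ via the pullback $\pi_m^*$, converting $D_{g_0}$ to $X_+$ and $D_{g_0}^J$ to $(-1)^mH_+$ (equations \eqref{eq:XandD} and \eqref{eq:H=JXJ}), and reducing each symbol identity to an algebraic computation in the raising and lowering operators of Lemma \ref{lemmV}.

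Once ellipticity is established, the standard parametrix construction for elliptic PDOs of order $2$ on the closed manifold $M$ yields the a priori estimate
\begin{equation*}
\|S\|_{H^{3/2}} \leq C\|PS\|_{H^{-1/2}} + C\|S\|_{H^{1/2}},\qquad S\in\mathrm{Ker}(D_{g_0}^*).
\end{equation*}
Since $\ker P$ is finite-dimensional (Fredholm property) and consists of smooth tensors, and $P$ restricted to the orthogonal complement $(\ker P)^\perp$ has closed range with a bounded inverse gaining exactly two derivatives, one may decompose $S = S_\perp + \Pi_{\ker P}S$ orthogonally in $L^2$, apply the estimate to $S_\perp$ (using $PS_\perp = PS$), and absorb the $\|S\|_{H^{1/2}}$ term into $C\|\Pi_{\ker P}S\|_{H^{3/2}}$ by equivalence of norms on $\ker P$. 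This gives the claimed inequality.

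The main obstacle is the principal symbol computation in the complex, quaternionic and octonionic cases: these require a careful algebraic analysis of how the twisted operators $D_{g_0}^{J_i}$ and the curvature operator $R^\circ$ interact with the solenoidal symbol subspace. I expect the $SM$-formalism of Section \ref{preli} together with the Pestov-type inequalities \eqref{eq:Pestov} and \eqref{eq:PestovH} and the Weitzenböck estimates of Lemma \ref{Weitzenböck} to provide the right tools to verify ellipticity in a uniform way across the four types of locally symmetric metrics.
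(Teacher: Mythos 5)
Your proposal follows essentially the same route as the paper: compute the principal symbol of $Q$ case by case, verify its injectivity on $\mathrm{Ker}(\iota_\xi)$ by pulling the symbol back to $SM$ via $\pi_{m(g_0)}^*$ and reducing to a pointwise scalar inequality, and then conclude by the standard elliptic parametrix/Fredholm argument on solenoidal tensors (the paper packages your a priori estimate plus absorption step as a sharp parametrix $A\Pi_{\mathrm{Ker}(D_{g_0}^*)}Q=\Pi_{\mathrm{Ker}(D_{g_0}^*)}-\Pi_{\mathrm{ker}(\Pi_{\mathrm{Ker}(D_{g_0}^*)}Q)}$). One small correction: the Pestov and Weitzenböck inequalities are global $L^2$ identities used later for solenoidal \emph{injectivity}, not for the symbol computation, which is a purely pointwise linear-algebra estimate of the form $\|\xi\|^2-\tfrac12\sum_i\big(\xi(J_iv)\big)^2\geq\tfrac12\|\xi\|^2$.
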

\begin{proof}
We will compute the principal symbol of $Q$ in each case. Note that by \cite[Lemma 14.1.11]{Lef}, $\Pi_{\mathrm{Ker}(D_{g_0}^*)}$ is a pseudo-differential operator of order $0$ with principal symbol equal to $\sigma_{\Pi_{\mathrm{Ker}(D_{g_0}^*)}}(x,\xi)=\pi_{\mathrm{Ker}(\iota_{\xi})}$ where $\iota_{\xi}$ denotes the contraction with the tangent vector $\xi^{\sharp}$ and $\pi_{\mathrm{Ker}(\iota_{\xi})}$ is the projection onto ${\mathrm{Ker}(\iota_{\xi})}$. We will thus obtain the principal symbol of $\Pi_{\mathrm{Ker}(D_{g_0}^*)}Q$ by composing with the principal symbol of $Q$.

If $g_0$ is a hyperbolic metric, from \eqref{eq:QH^n} we immediately see that
$$\sigma_Q(x,\xi)=\frac 14 \|\xi\|^2 ,\quad  \sigma_{\Pi_{\mathrm{Ker}(D_{g_0}^*)}Q}(x,\xi)=\frac 14\pi_{\mathrm{ker}(\iota_{\xi})}\|\xi\|^2, $$
which proves that $\Pi_{\mathrm{Ker}(D_{g_0}^*)}Q$ is elliptic on solenoidal tensors, i.e., its principal symbol satisfies $\sigma_{\Pi_{\mathrm{Ker}(D_{g_0}^*)}Q}(x,\xi)\geq c\pi_{\mathrm{ker}(\iota_{\xi})}\|\xi\|^2$ for some $c>0$ and $\|\xi\|\gg 1$.

If $g_0$ is either complex hyperbolic or quaternionic hyperbolic, then we prove that $\Pi_{\mathrm{Ker}(D_{g_0}^*)}Q$ is of gradient type on $\mathrm{Ker}(D_{g_0}^*)$, i.e., its principal symbol is injective on $\mathrm{Ker}(D_{g_0}^*)$ at all points. Using \eqref{eq:QCH^n} or \eqref{eq:QHH^n}, we see that
$$\sigma_{Q}=\frac 14 \sigma_{L(\nabla^*\nabla)}+\frac 18 \sum_{i=1}^m\sigma_{(D_{g_0}D_{g_0}(S\circ J_i))\circ J_i}, $$
with $m=1$ for a quotient of a complex hyperbolic space and $m=3$ for a quotient of the quaternionic hyperbolic space.
The principal symbol of $\nabla^*\nabla$ is $\|\xi\|^2$ and using \cite[Lemma 14.1.11]{Lef}, the principal symbol of $D_{g_0}$ is $ij_{\xi}:=i\mathrm{Sym}(\xi^{\sharp}\cdot)$ where $\xi^{\sharp}$ is the image by the musical isomorphism of the co-vector $\xi$. In total, this gives
$$\sigma_{\Pi_{\mathrm{Ker}(D_{g_0}^*)}Q}(x,\xi)S=\frac 1 4\|\xi\|^2\pi_{\mathrm{ker}(\iota_\xi)}L(S)-\frac 18 \sum_{i=1}^m \pi_{\mathrm{ker}(\iota_\xi)}j_{J_i\xi} j_{J_i\xi} S. $$
Consider $(x,\xi) \in T(SM)$ with $\xi \neq 0$, then suppose that $S\in S^2T^*_xM$ is such that $\sigma_{\Pi_{\mathrm{Ker}(D_{g_0}^*)}Q}(x,\xi)S=0$. First, we notice that $L$ and $D_{g_0}^*$ commute which gives $L$ and $\pi_{\mathrm{ker}(\iota_\xi)}$ commute. Moreover, we see that $\iota_\xi j_{J_i\xi}=0$ because $g(J_i\xi^{\sharp}, \xi^{\sharp})=0$. In particular, one has 
$$0=\sigma_{\Pi_{\mathrm{Ker}(D_{g_0}^*)}Q}(x,\xi)S=\frac 1 4\|\xi\|^2L(S)-\frac 18 \sum_{i=1}^m j_{J_i\xi} j_{J_i\xi} S. $$
This implies $\pi_4^*\sigma_{\Pi_{\mathrm{Ker}(D_{g_0}^*)}Q}(x,\xi)S=0$, in other words
$$\frac 14 \|\xi\|^2\pi_2^*S-\frac 18 \sum_{i=1}^m\big(\xi(J_iv)\big)^2\pi_2^*S=\frac 14 \underbrace{\left(\|\xi\|^2-\frac 12  \sum_{i=1}^m\big(\xi(J_iv)\big)^2\right)}_{\geq \tfrac 12 \|\xi\|^2}\pi_2^*S=0. $$
But then $\pi_2^*S=0$ and thus $S=0$. In particular, because the principal symbol is homogenous in $\xi$, this shows that $\Pi_{\mathrm{Ker}(D_{g_0}^*)}Q$ is of gradient type on $\mathrm{Ker}(D_{g_0}^*)$. 

Finally, if $g_0$ is an octonionic hyperbolic metric, we obtain from \eqref{eq:OH^n},
$$24\sigma_{Q}=7 \sigma_{L(\nabla^*\nabla)}- \sigma_{\mathrm{Sym}(R^{\circ}(\nabla^*\nabla))}. $$
The principal symbol of the second term can be computed as follows. Fix $(x,\xi)\in SM$ with $\xi\neq 0$ and choose a phase function $\varphi \in C^{\infty}(M)$ such that $d\varphi(x)=\xi$. Then
$$\sigma_{\mathrm{Sym}(R^{\circ}(\nabla^*\nabla))}(x,\xi)S=\lim_{h\to0}{h^2}e^{-\frac{i\varphi}h}\mathrm{Sym}(R^{\circ}(\nabla^*\nabla))(e^{\frac{i\varphi}{h}}S)=-\mathrm{Sym}(R^{\circ}(\xi(\cdot)^2S)), $$ see \cite[Lemma 5.1.15]{Lef}. Fix an orthonormal basis $(e_i)_{i=1}^n$ such that $v_1=v$ and $v_1,\ldots, v_8$ span $\mathrm{Cay}(v)$. Using \eqref{eq:RR}, 
$$-\pi_2^*\mathrm{Sym}(R^{\circ}(\xi(\cdot)^2S))(x,v)=\sum_{i=1}^n\xi(R(e_i,v)v)\xi(e_i)S_x(v,v) $$
which then gives, similarly to previous cases,
$$24\pi_4^*(\sigma_{Q}S)(x,v)\geq \underbrace{(7\|\xi\|^2-4\sum_{i=1}^{n}\xi(e_i)^2)}_{\geq  3 \|\xi\|^2}\pi_2^*S. $$
From this estimate, we conclude that the principal symbol is injective.

In all the previous cases, this means that $(\Pi_{\mathrm{Ker}(D_{g_0}^*)}Q)^*\Pi_{\mathrm{Ker}(D_{g_0}^*)}Q$ is elliptic on solenoidal tensors and thus there exists a sharp parametrix (see \cite[Theorem 6.1.3]{Lef}), i.e., there exists $A\in \Psi^{-2}$ a pseudo-differential operator of order $-2$ such that
$$A\Pi_{\mathrm{Ker}(D_{g_0}^*)}Q=\Pi_{\mathrm{Ker}(D_{g_0}^*)}-\Pi_{\mathrm{ker}(\Pi_{\mathrm{Ker}(D_{g_0}^*)}Q)}. $$
This finally gives, for a divergence-free tensor $S$,
\begin{align*}\|S\|_{H^{3/2}}^2&\leq 2 \|A\Pi_{\mathrm{Ker}(D_{g_0}^*)}Q S\|^2_{H^{3/2}}+2\|\Pi_{\mathrm{ker}(\Pi_{\mathrm{Ker}(D_{g_0}^*)}Q)} S\|^2_{H^{3/2}}
\\&\leq C\|\Pi_{\mathrm{Ker}(D_{g_0}^*)}Q S\|^2_{H^{-1/2}}+C\|\Pi_{\mathrm{ker}(\Pi_{\mathrm{Ker}(D_{g_0}^*)}Q)} S\|^2_{H^{3/2}},
\end{align*}
for some $C>0$.
This concludes the proof.
\end{proof}
We are now ready to show that solenoidal injectivity implies our main theorem. 
\begin{prop}
\label{propprop}
Under the hypothesis of Theorem \ref{MainTheo}, if $\Pi_{\mathrm{Ker}(D_{g_0}^*)}Q$ is solenoidal injective $($where $Q$ is defined in Theorem \ref{theoHess}$)$ then one has \eqref{eq:stability}.
\end{prop}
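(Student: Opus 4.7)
The plan is to run a second-order Taylor expansion of $\Phi$ at $g_0$ and turn the Hessian identity of Theorem~\ref{theoHess} into a coercive $H^{3/2}$ lower bound, by combining the coercivity~\eqref{eq:coercive} of the $\Pi$-operator with the ellipticity statement of Proposition~\ref{elliptic}, while Lemma~\ref{averaging} handles the zero-mean correction.

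First I would set $S := \phi_g^* g - g_0$; by the slice Lemma~\ref{slice lemma} one has $S \in \mathrm{Ker}(D_{g_0}^*)$, and by $\mathrm{Diff}_0(M)$-invariance $\Phi(g) = \Phi(\phi_g^* g)$. Since $g_0$ is locally symmetric, $\Phi(g_0) = 0$ and $d\Phi(g_0) = 0$, so Taylor's formula yields
\[
\Phi(g) \;=\; \tfrac{1}{2}\langle d^2\Phi(g_0)S, S\rangle + R(S),
\]
with a remainder $R(S) = o(\|S\|_{H^{3/2}}^2)$ as $\|S\|_{C^{3n/2+7}} \to 0$. Chaining Theorem~\ref{theoHess} with the coercive estimate~\eqref{eq:coercive} gives
\[
\langle d^2\Phi(g_0)S,S\rangle + \langle \pi_{m(g_0)}^* Q(S), 1\rangle_{L^2}^2 \;=\; \langle \Pi_{m(g_0)} Q(S), Q(S)\rangle \;\geq\; c\,\|\Pi_{\mathrm{Ker}(D_{g_0}^*)} Q(S)\|_{H^{-1/2}}^2.
\]
The solenoidal injectivity hypothesis says exactly that $\mathrm{Ker}(\Pi_{\mathrm{Ker}(D_{g_0}^*)} Q) \cap \mathrm{Ker}(D_{g_0}^*) = \{0\}$, so the residual projection term in Proposition~\ref{elliptic} vanishes on the solenoidal slice and I get
\[
\|S\|_{H^{3/2}}^2 \;\leq\; C\,\|\Pi_{\mathrm{Ker}(D_{g_0}^*)} Q(S)\|_{H^{-1/2}}^2 \;\leq\; C'\bigl(\Phi(g) + |R(S)| + \langle \pi_{m(g_0)}^* Q(S), 1\rangle^2\bigr).
\]
Lemma~\ref{averaging} then bounds the averaging term by $C_n(\mathrm{Vol}_g(M) - \mathrm{Vol}_{g_0}(M))^2 + C_n\|S\|_{C^{5,\alpha}}^4$, and shrinking $\mathcal{U}$ in the $C^{3n/2+7}$-topology absorbs both $|R(S)|$ and $\|S\|_{C^{5,\alpha}}^4$ into the left-hand side via interpolation between $\|S\|_{C^{3n/2+7}}$ and $\|S\|_{H^{3/2}}$, delivering exactly~\eqref{eq:stability}.

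The main obstacle is proving the Taylor remainder estimate in the mixed form $|R(S)| \leq \omega(\|S\|_{C^N})\,\|S\|_{H^{3/2}}^2$ with $\omega(0)=0$: the functional $\Phi$ is built from the topological and Liouville entropies, which depend on the metric through the orbit equivalence \eqref{eq:homeo} of Anosov flows and through the measure of maximal entropy, so obtaining the required bound relies on combining the structural stability of Anosov flows (yielding $C^N$-smoothness of the principal terms of $\Phi$ once one differentiates along $\epsilon$) with Sobolev--Hölder interpolation. The remaining pieces — the ellipticity of the principal symbol, the coercivity of $\Pi_m$, the triviality of the kernel term under solenoidal injectivity, and the computation of the averaging correction — then slot in essentially formally.
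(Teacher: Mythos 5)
Your proposal follows essentially the same route as the paper: project onto the solenoidal slice, Taylor-expand $\Phi$, rewrite the Hessian via Theorem \ref{theoHess}, chain the coercivity \eqref{eq:coercive} with the ellipticity of Proposition \ref{elliptic} (whose kernel term vanishes by solenoidal injectivity), control the averaging term with Lemma \ref{averaging}, and absorb the remainder by Sobolev--Hölder interpolation between $H^{3/2}$ and $C^N$. The "main obstacle" you flag is handled in the paper exactly as you suggest: the remainder is taken to be $O(\|\phi^*g-g_0\|_{C^{5,\alpha}}^{3})$ and then bounded by $c\,\|\phi^*g-g_0\|_{H^{3/2}}^{2}\,\|\phi^*g-g_0\|_{C^{N}}$ via interpolation, which is absorbed into the left-hand side.
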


\begin{proof}
Recalling \eqref{eq:theoHess} and \eqref{eq:Pi2}, we have
$$
\langle d^2\Phi(g_0)S,S\rangle_{L^2(S^2T^*M)} =\langle\Pi_{m(g_0)}^{g_0}Q(S),Q(S)\rangle - \langle \pi_{m(g_0)}^*Q(S), 1\rangle^2_{L^2(SM)}.
$$
Let $g\in \mathcal U$. We can use a Taylor expansion as well as the slice Lemma \ref{slice lemma} to obtain for any $\alpha\in(0,1)$ a diffeomorphism $\phi$ such that
\begin{align*}\Phi(g)=\Phi(\phi^*g)=C_n(\langle\Pi_{m(g_0)}^{g_0}Q(\phi^*g-g_0),Q(\phi^*g-g_0)\rangle &- \langle \pi_{m(g_0)}^*Q(\phi^*g-g_0), 1\rangle^2)
\\&+O(\|g-g_0\|_{C^{5,\alpha}}^3).
\end{align*}
The central point is that $S:=\phi^*g-g_0$ is solenoidal so we can use the coercive estimate \eqref{eq:coercive} and Lemma \ref{averaging}
$$\|\Pi_{\mathrm{Ker}(D_{g_0}^*)}Q(\phi^*g-g_0)\|_{H^{-1/2}}^2\leq C'_n\Phi(g)+C_n'\|\phi^*g-g_0\|_{C^{5,\alpha}}^3+C_n  (\mathrm{Vol}_g(M)-\mathrm{Vol}_{g_0}(M))^2. $$
To conclude we use Proposition \ref{elliptic} and the solenoidal injectivity to get
$$C\|S\|_{H^{3/2}}^2\leq \|\Pi_{\mathrm{Ker}(D_{g_0}^*)}Q S\|^2_{H^{-1/2}}+\|\Pi_{\mathrm{ker}(\Pi_{\mathrm{Ker}(D_{g_0}^*)}Q)} S\|^2_{H^{3/2}}= \|\Pi_{\mathrm{Ker}(D_{g_0}^*)}Q S\|^2_{H^{-1/2}}.$$
Plugging this into the last equation gives
$$ \|\phi^*g-g_0\|_{H^{3/2}}^2\leq C'_n\Phi(g)+C_n'\|\phi^*g-g_0\|_{C^{5,\alpha}}^3+C_n  (\mathrm{Vol}_g(M)-\mathrm{Vol}_{g_0}(M))^2.$$
We can use Sobolev embedding and an interpolation argument, using the fact that $\phi^*g-g_0$ is small in $C^N$ norm to absorb the term $C_n'\|\phi^*g-g_0\|_{C^{5,\alpha}}^3$ in the left hand side. More precisely, for any $\beta>\alpha$, 
$$\|\phi^*g-g_0\|_{C^{5,\alpha}}^3\leq c_{g_0}\|\phi^*g-g_0\|_{H^{n/25+\beta}}^3\leq c'_{g_0}\|\phi^*g-g_0\|_{H^{3/2}}^2\|\phi^*g-g_0\|_{C^N}, $$
for any $N>\tfrac 3 2 n+6+6\beta$. This last estimate gives \eqref{eq:stability}.
\end{proof}

\section{Solenoidal injectivity of the operator $\Pi_{\mathrm{Ker}(D_{g_0}^*)}Q.$}
\label{sectionsinj}
In this section, we prove that  $\Pi_{\mathrm{Ker}(D_{g_0}^*)}Q$ is solenoidal injective. This uses the geometry of the different types of locally symmetric spaces and relies on a Weitzenböck or Bochner formulas on symmetric tensors.
\begin{prop}[Solenoidal injectivity]
\label{sinj}
With the notations of the previous section, the operator $\Pi_{\mathrm{Ker}(D_{g_0}^*)}Q$ is solenoidal injective for real hyperbolic and complex hyperbolic metrics.
\end{prop}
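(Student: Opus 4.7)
The plan is to reduce the statement to a coercive inequality on $S$. The hypothesis $\Pi_{\mathrm{Ker}(D_{g_0}^*)} Q S = 0$ is equivalent to $Q(S) = D_{g_0} p$ for some symmetric tensor $p$, so the task is to use this coboundary condition, together with the solenoidal constraint $D_{g_0}^* S = 0$, to force $S = 0$.

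For the real hyperbolic case, since $Q(S)$ and $S$ both live in $C^\infty(M; S^2 T^*M)$, I would pair the identity $Q(S) = D_{g_0} p$ against $S$ to obtain
\[
0 \;=\; \langle Q(S), S\rangle \;=\; -\tfrac12 \|S\|^2 + \tfrac14 \|\nabla S\|^2 + \tfrac14 \|\mathrm{tr}_{g_0} S\|_{L^2(M)}^2 ,
\]
using formula \eqref{eq:QH^n} together with $\langle D_{g_0} p, S\rangle = \langle p, D_{g_0}^* S\rangle = 0$. Decomposing $S = S_0 + h\, g_0$ with $S_0$ trace-free and $h = \mathrm{tr}_{g_0} S / n$, the pointwise orthogonality $\langle S_0, g_0\rangle = 0$ and the identity $\mathrm{tr}(\nabla S_0) = \nabla(\mathrm{tr} S_0) = 0$ eliminate the cross terms in $\|S\|^2$ and $\|\nabla S\|^2$. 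Invoking the Weitzenböck inequality \eqref{eq:Weit1}, namely $\|\nabla S_0\|^2 \geq n \|S_0\|^2$, the above identity reduces to
\[
0 \;\geq\; \tfrac{n-2}{4} \|S_0\|^2 + \tfrac{n(n-1)}{4} \|h\|_{L^2(M)}^2 + c_n \|dh\|_{L^2(M)}^2 ,
\]
all of whose coefficients are strictly positive for $n \geq 3$, so $S_0 = 0$ and $h = 0$.

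For the complex hyperbolic case, $Q(S)$ is a $4$-tensor and direct pairing with $S$ is unavailable, so I would transport the problem to $SM$. Writing $u = \pi_2^* S \in C^\infty(SM)$, the identities \eqref{eq:deltaH}, \eqref{eq:J1} and the conjugation relation \eqref{eq:H=JXJ} give
\[
\pi_4^* Q(S) \;=\; \tfrac14 \Delta_{\mathbb H}^{\mathrm{tot}} u + \tfrac12 \bigl( {-}u + (\mathrm{tr}_{g_0} S) + J u \bigr) + \tfrac18 H^2 u ,
\]
while $Q(S) = D_{g_0} p$ translates to $\pi_4^* Q(S) = X f$ with $f = \pi_3^* p \in \Omega_1 \oplus \Omega_3$, i.e.\ $\pi_4^* Q(S)$ is a geodesic-flow coboundary. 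I would decompose $u = h + u_2$ with $h \in \Omega_0$, $u_2 = \pi_2^* S_0 \in \Omega_2$, and further split $u_2 = u_2^{-2} + u_2^{0} + u_2^{+2}$ using the $V$-eigenspaces from Lemma \ref{lemmV}. Projecting the coboundary identity onto each harmonic degree $\Omega_0, \Omega_2, \Omega_4$, using the solenoidal constraint \eqref{eq:divfree} to eliminate $f$, and combining the two Pestov inequalities \eqref{eq:Pestov}, \eqref{eq:PestovH} with the refined raising-operator estimate \eqref{eq:NewPestov} for $\eta_+^+$ and the complex Weitzenböck inequality \eqref{eq:Weit2}, should yield a positive definite quadratic form in $(h, u_2^{-2}, u_2^0, u_2^{+2})$ whose vanishing forces $u = 0$, hence $S = 0$.

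The main obstacle is the complex case. The Weitzenböck bound \eqref{eq:Weit2} contains the sign-indefinite term $-3 \langle S_0 \circ J, S_0\rangle$, and by \eqref{eq:JandV} the operator $J$ acts as $-1$ on $u_2^{\pm 2}$ and as $+1$ on $u_2^{0}$: this contribution has opposite signs across the three $V$-eigenspaces of $\Omega_2$ and must be balanced precisely against the $\tfrac18 H^2 u$ contribution of $Q$, using the commutation relations of Lemma \ref{lemmcommComplex} and the refined inequality \eqref{eq:NewPestov}, which only provides a useful estimate when $\lambda > 0$. Achieving this sign-matching with an overall positive definite form is exactly what restricts the method to real and complex hyperbolic spaces; the analogous accounting in the quaternionic and octonionic settings remains elusive, cf.\ Conjecture \ref{conj}.
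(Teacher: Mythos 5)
Your real hyperbolic argument is correct and is essentially the paper's: pair $Q(S)=D_{g_0}p$ against the solenoidal $S$, split $S=S_0+hg_0$, and apply the Weitzenböck bound \eqref{eq:Weit1}; the constants you obtain match and are strictly positive for $n\geq 3$.

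The complex hyperbolic part, however, contains a genuine gap. You correctly identify all the ingredients the paper uses --- passage to $SM$, the splitting into spherical harmonics and $V$-eigenspaces, the Pestov inequalities \eqref{eq:Pestov}, \eqref{eq:PestovH}, the refined estimate \eqref{eq:NewPestov}, and the Weitzenböck bound \eqref{eq:Weit2} --- but the decisive step is asserted rather than proved: ``should yield a positive definite quadratic form'' is precisely the content of the proposition. Two concrete things are missing. First, you do not explain how the unknown $3$-tensor $p$ is actually removed from the equation. The solenoidal constraint \eqref{eq:divfree} constrains $S$, not $p$; what eliminates $p$ is (a) solving for its trace-free part explicitly in terms of $S$ by projecting onto $\Omega_4$ and using the injectivity of $\eta_+^+$ on positive $V$-eigenspaces together with the commutation relations \eqref{eq:comm3} (one finds $\pi_3^*p=-\tfrac18(X_+Jf+H_+Vf)$ with $f=\pi_2^*S_0$), and then (b) observing that the remaining pure-trace part contributes $L(D_{g_0}q)$, so that injectivity of $L$ reduces everything to an identity $W(S)=D_{g_0}q$ between \emph{$2$-tensors}, which finally pairs to zero against the divergence-free $S$. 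Without this reduction there is nothing to pair with $S$ and no quadratic form to analyze.

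Second, even granting the reduction, the positivity of the resulting form is not automatic and must be checked quantitatively. The term $\tfrac18 H^2u$ in $Q$ produces a negative contribution of size comparable to $\|\nabla S_0\|^2$ (the paper's bound is $-\tfrac{97}{96}\|\nabla S_0\|^2$ for the auxiliary operator $Q_0$, to be weighed against the coefficient $\tfrac14$ of $\nabla^*\nabla S_0$ in $Q_1$), and the divergence-free condition enters again through $\eta_-^+f_2=\eta_-^-f_{-2}=0$ to control $\|H_-f\|^2$ by $\|X_-f\|^2$ plus terms absorbed by \eqref{eq:NewPestov}. After the Weitzenböck step the final margin is $95n-198>0$: the inequality holds for all complex hyperbolic quotients (where $n\geq 4$), but only by a modest amount, so one cannot wave at the combination of estimates and conclude positivity --- the bookkeeping of the constants, including the degree-dependent conformal factors $\Lambda_m^n$ from \eqref{eq:conformal}, is the proof. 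Your diagnosis of \emph{why} the method stops at the complex case (the sign-indefinite $\langle S_0\circ J,S_0\rangle$ term and the restriction $\lambda>0$ in \eqref{eq:NewPestov}) is accurate, but the argument as written does not establish the coercive estimate that the proposition requires.
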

We make the following conjecture concerning the remaining cases, we insist on the fact that this last step seems to be purely geometric and dynamical but does not seem to rely on microlocal analysis.
\begin{conj}
\label{conj}
The operator $\Pi_{\mathrm{Ker}(D_{g_0}^*)}Q$ is solenoidal injective for any locally symmetric metric.
\end{conj}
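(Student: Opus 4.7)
The plan is to extend the Bochner-type strategy that underlies Proposition~\ref{sinj} (real and complex hyperbolic, already handled there) to the two remaining cases. In all four settings, start by fixing $S\in\mathrm{Ker}(D_{g_0}^*)$ with $\Pi_{\mathrm{Ker}(D_{g_0}^*)}Q(S)=0$, so that $Q(S)=D_{g_0}q$ for some $(m(g_0){-}1)$-tensor $q$; on $SM$ this reads $\pi_{m(g_0)}^*Q(S)=X\pi_{m(g_0)-1}^*q$. Decompose $S=S_0+L(h)$ with $S_0$ trace-free, and translate everything into spherical harmonics through the isomorphisms \eqref{eq:iso}; the solenoidal constraint becomes \eqref{eq:divfree}.

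For the quaternionic case, adapt the complex hyperbolic argument by introducing, on any open set $U\subset M$ where $(J_1,J_2,J_3)$ is a local frame of $Q$, the three conjugated vector fields $H^{J_i}=J_iXJ_i^{-1}$ of \eqref{eq:H=JXJ}. Each $J_i$ is a pointwise isometry compatible with $g_0$, so the Pestov inequality \eqref{eq:PestovH} holds verbatim for every $H^{J_i}$. Expanding $\pi_4^*Q(S)$ via \eqref{eq:QHH^n}, pairing in $L^2(SM)$ with a test function built from $\pi_2^*S_0$ and $h$, and integrating the $\sum_i D_{g_0}^{J_i}D_{g_0}^{J_i}S$ term by parts as a sum of squared $H^{J_i}$-norms should produce a Bochner-type identity. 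Controlling the cross terms via $|\langle S_0\circ J_i,S_0\rangle|\leq\|S_0\|_{L^2}^2$ upgrades \eqref{eq:Weit3} to $\langle\nabla^*\nabla S_0,S_0\rangle\geq n\|S_0\|_{L^2}^2$; the $\mathrm{O}(3)$-equivariance that makes the Weitzenböck formula global (see the remark after \eqref{eq:vect}) ensures the patched local identities descend to $M$. Combined with the Pestov inequalities for $X$ and each $H^{J_i}$, this should close out as $\|S_0\|_{L^2}=\|h\|_{L^2}=0$.

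The octonionic case is where the real obstacle sits. Globally there is no algebraic almost complex structure, and the seven twisted parallel directions $Y_2,\ldots,Y_8$ of \eqref{eq:CayleySpan} exist only along individual geodesics. My plan is to replace the family $H^{J_i}$ by a single second-order operator $\mathcal{H}:C^\infty(SM)\to C^\infty(SM)$ defined fiberwise by averaging horizontal derivatives over the $S^7$ of unit vectors in the normal part of $\mathrm{Cay}(v)$, and to show that $\mathcal{H}$ absorbs precisely the curvature-twisted Hessian $\mathrm{Sym}(R^\circ(\nabla^2 S))$ appearing in \eqref{eq:OH^n}. A Pestov identity for $\mathcal{H}$ would follow by integrating the standard Pestov identity on $SM$ against the fiberwise $S^7$-measure, exploiting the homogeneity of the Cayley line bundle. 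The Weitzenböck bound \eqref{eq:Weit4}, combined with the pointwise estimate $|\langle R^\circ(S_0),S_0\rangle|\leq C_n\|S_0\|^2$ coming from the sectional curvature block structure $(-4,\ldots,-4,-1,\ldots,-1)$ of $\mathbb{O}\mathbb{H}^2$, then controls $\langle\nabla^*\nabla S_0,S_0\rangle$ from below by a positive multiple of $\|S_0\|_{L^2}^2$, closing the estimate.

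The hard part will be constructing $\mathcal{H}$ and verifying its Pestov identity: in the absence of any algebraic structure replacing $J$ one cannot use the conjugation trick \eqref{eq:Hconj}, and a genuinely new computation in the $SM$-geometry is required. A promising structural input is the isotropy representation of $\mathrm{Spin}(9)$ acting on the tangent spheres of $\mathbb{O}\mathbb{H}^2$, which plays the same role for the octonionic model that $\mathrm{U}(n/2)$ plays in the complex case. Decomposing $\Omega_m$ into $\mathrm{Spin}(9)$-isotypic components should once again split the horizontal raising and lowering operators into manageable blocks, and injectivity of the analogue of $\eta_+^+$ from Lemma~\ref{lemmV} on positive-weight components should follow from a variant of \eqref{eq:NewPestov}. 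This representation-theoretic step is the substantive open ingredient that I expect to occupy the bulk of a complete proof.
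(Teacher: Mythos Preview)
The statement you are attempting is Conjecture~\ref{conj}, which the paper explicitly leaves \emph{open}: the paper proves solenoidal injectivity only for real and complex hyperbolic metrics (Proposition~\ref{sinj}) and states that it is ``currently unable to prove it'' in the quaternionic and octonionic cases. There is therefore no proof in the paper to compare against, and what you have written is a research plan rather than a proof --- you say so yourself (``should produce a Bochner-type identity'', ``should close out'', ``the substantive open ingredient'').

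For the quaternionic case, your plan underestimates the difficulty. You are right that each $J_i$ is a pointwise isometry, so the Pestov inequality \eqref{eq:PestovH} for each $H^{J_i}$ follows formally. But the complex hyperbolic argument did much more than use Pestov: it relied on the commutation relations \eqref{eq:comm}, on the eigenspace decomposition of the single operator $V$ (Lemma~\ref{lemmV}), and crucially on solving for the trace-free part $p$ of $\tilde p$ via the injectivity of $\eta_+^+$ on specific $V$-eigenspaces (equation \eqref{eq:p}). In the quaternionic setting the three $J_i$ are \emph{not parallel} (the paper flags this after \eqref{eq:curvquater}) and they anticommute pairwise, so the corresponding $V_i$ do not commute and admit no simultaneous diagonalization; the commutators $[X,V_i]$ acquire extra terms coming from $\nabla J_i$; and there is no evident substitute for the step that pins down $p$ as a function of $S$. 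Your appeal to $\mathrm{O}(3)$-equivariance only guarantees that certain \emph{sums} (like $\sum_i S_0\circ J_i$ or $\sum_i D_{g_0}^{J_i}D_{g_0}^{J_i}S$) are globally defined; it does not give you the fine raising/lowering algebra that drove the complex case.

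For the octonionic case you candidly acknowledge that the construction of $\mathcal H$ and its Pestov identity are open. That is honest, but it means nothing here rises to the level of a proof. Your $\mathrm{Spin}(9)$-isotypic idea is a reasonable direction, but note that the paper already remarks (below \eqref{eq:ww}) that even writing $\sum_{i=1}^7 S_0(Y_{i+1},Y_{i+1})$ as $\pi_2^*T$ for an explicit tensor $T$ is unclear in this case --- the absence of any global algebraic structure replacing $J$ is exactly why the authors left this as a conjecture.
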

Together with Proposition \ref{propprop}, this would prove Katok's entropy conjecture near any locally symmetric metric.
\subsection{Quotients of $\mathbb R \mathbb H^n$.}
Recall that in this case, $Q$ is given by
$$
Q(S)=-\frac 1 2 S+\frac 1 4 \nabla^* \nabla S+\frac 12 (\mathrm{tr}_{g_0} S)g_0.$$
Suppose that there exists $S\in \mathrm{Ker}(D_{g_0}^*)$ such that $\Pi_{\mathrm{Ker}(D_{g_0}^*)}Q S=0$. In particular, one has
$$\langle \Pi_{\mathrm{Ker}(D_{g_0}^*)}Q (S),S\rangle_{L^2(S^2T^*M)} = \langle Q(S),S\rangle_{L^2(S^2T^*M)} =0. $$
We first decompose $S=S_0+L(h)$ with $S_0\in C^{\infty}(M;S^2_0T^*M)$ trace-free and $h\in C^{\infty}(M)$. As the rough Laplacian commutes with the trace, we see that $Q$ preserves the decomposition, in the sense that 
$$\langle Q(S),S\rangle_{L^2(S^2T^*M)} = \langle Q(S_0),S_0\rangle_{L^2(S^2T^*M)} +\langle Q(L(h)),L(h)\rangle_{L^2(S^2T^*M)} . $$
To estimate the first term,  we will use the Weitzenböck identity \eqref{eq:Weit1}:
\begin{align*}
\langle Q(S_0),S_0\rangle_{L^2(S^2T^*M)} &=-\frac 1 2\|S_0\|_{L^2(S^2T^*M)} ^2+\frac 1 4 \langle\nabla^*\nabla S_0,S_0\rangle_{L^2(S^2T^*M)} 
\\&+\frac 1 2\langle  (\mathrm{tr}_{g_0} S_0)g_0 ,S_0\rangle_{L^2(S^2T^*M)}  \geq \frac 1 4(n-2)\|S_0\|_{L^2(S^2T^*M)} ^2.
\end{align*}
Now, the second part is easier to bound as
$$\langle Q(hg_0),hg_0\rangle_{L^2(S^2T^*M)}= \frac n 4\langle h,\Delta h+2(n-1)h\rangle_{L^2(M)}\geq \frac{n(n-1)}{2}\|h\|_{L^2(M)}^2.$$
In total this gives,
$$\frac 1 4(n-2)\|S_0\|_{L^2(S^2T^*M)}^2+\frac n 2 (n-1)\|h\|_{L^2(M)}^2\leq \langle Q(S),S\rangle_{L^2(S^2T^*M)} =0.  $$
This implies $S=0$ if $n\geq 3$ and thus the solenoidal injectivity holds.

\subsection{Quotients of $ \mathbb C \mathbb H^{n/2}$.}
The real dimension $n$ is even in this case. Note however that any quotient of $\mathbb C\mathbb H^1$ is actually isometric to a real-hyperbolic case, see \cite[Proposition 10.12]{BriHa}. In the following, we will thus suppose without loss of generality that $n\geq 4$.

Suppose again for a  contradiction that  there exists $S\in \mathrm{Ker}(D_{g_0}^*)$ such that $Q(S)=D_{g_0}\tilde p$ for $\tilde p\in C^{\infty}(M;S^3T^*M)$. Note that this time, $Q(S)$ is a $4$-tensor and thus we cannot make sense of the scalar product $\langle Q(S),S\rangle$ anymore.

\textbf{Strategy for the solenoidal injectivity in the complex case.}
The idea is to first consider $p\in C^{\infty}(M;S_0^3T^*M)$ the trace-free part of $\tilde p$ and project $Q(S)=D_{g_0}\tilde p$ onto the trace-free component. In particular, using the Pestov identity \eqref{eq:X_+inj} will provide an expression of $p$ in terms of $S$. Now, this means that using the commutativity relation $[D_{g_0},L]=0$ :
$$Q(S)-D_{g_0}p=D_{g_0}(\tilde p- p)=D_{g_0}(L(q))=L(D_{g_0}q), $$
where $q\in C^{\infty}(M;T^*M)$ is a $1$-tensor. Since $p$ is a function of $S$, this means that
$$Q(S)-D_{g_0}p=L(W(S))=L(D_{g_0}q), $$
for some (explicit) operator $W: C^{\infty}(M;S^2T^*M)\to C^{\infty}(M;S^2T^*M)$.
The injectivity of $L$ gives $W(S)=D_{g_0}q$ and we have reduced ourselves to an equation on symmetric $2$-tensors. We can now consider the null-scalar product $\langle W(S),S\rangle =0$. The contradiction will then come from the coersive estimate $\langle W(S),S\rangle\geq C_n \|S\|^2$ for some constant $C_n>0$ as in the real hyperbolic case.

Recall from \eqref{eq:QCH^n} that the operator $Q$ is given by
$$
Q(S)=\frac 1 4 L(\nabla^* \nabla S)+\frac 1 2 \big({-}L(S)+ (\mathrm{tr}_{g_0} S)L(g_0)+L(S\circ J)\big)-\frac 1 8 D_{g_0}^JD_{g_0}^JS.$$
Let  $\tilde p\in C^{\infty}(M;S^3T^*M)$ such that $Q(S)=D_{g_0}\tilde p$ and let $p\in C^{\infty}(M;S_0^3T^*M)$ be the trace-free part of $\tilde p$. We will keep the notation $p$ to denote the pullback $\pi_3^*p\in \Omega_3$. By Lemma \ref{lemmV}, we know that $p=p_3+p_1+p_{-1}+p_{-3}$ with $p_\lambda\in E_3^\lambda$ and $p_{-\lambda}=\bar{p_\lambda}$. On the other hand, we will decompose $\pi_2^*S=f+h$ with $f\in \Omega_2$ and $h\in \Omega_0$. We can further decompose $f=f_2+f_0+f_{-2}$ with $f_j \in E^j_2$ in a similar manner to $p$. 

\textbf{Solving for the trace-free part of $\tilde p$.} We will now use the theory of raising/lowering operators defined in Subsection \ref{CH}.

Using \eqref{eq:XandD} and \eqref{eq:H=JXJ}, we obtain
$\pi_4^*(D_{g_0}^JD_{g_0}^JS)=-H^2\pi_2^*S. $ As a consequence of the mapping properties of $X$ recalled in Subsubsection \ref{X+X-}, the only contribution for $\Omega_4$ are $X_+p$ for $\pi_4^*D_{g_0}\tilde p$ and $-\tfrac 18 H_+H_+f$ for $\pi_4^*Q(S).$

Next, using the notations of Lemma \ref{lemmV}, the space $\Omega_4$ further splits into 
$$\Omega_4=E_4^4\oplus E_4^2\oplus E_4^0\oplus E_4^{-2}\oplus E_4^{-4}. $$
In particular, projecting the equation $Q(S)=D_{g_0}\tilde p$ onto $E_4^4,E^2_4$ and $E^0_4$ yields, using the mapping properties proved in Lemma \ref{lemmV},
\begin{equation}
\begin{cases}
8\eta_+^+p_3&=-\eta_+^+\eta_+^+f_2
\\
8(\eta_+^-p_3+\eta_+^+p_1)&=(\eta_+^+\eta_+^-+\eta_+^-\eta_+^+)f_2-\eta_+^+\eta_+^+f_0
\\
8(\eta_+^-p_1+\eta_+^+p_{-1})&=(\eta_+^+\eta_+^-+\eta_+^-\eta_+^+)f_0-\eta_+^-\eta_+^-f_2-\eta_+^+\eta_+^+f_{-2}.
\end{cases}
\end{equation}
Using Lemma \ref{lemmV} and the injectivity of $\eta_+^+$ on $E_4^4$, the first line gives $8p_3=-\eta_+^+f_2$. Re-injecting in the second line, using the commutativity of $\eta_+^-$ and $\eta_+^+$ (see \eqref{eq:comm3}) and using the injectivity of $\eta_+^+$ again gives $8p_1=3\eta_+^-f_2-\eta_+^+f_0$. In total, we thus have
\begin{align*}8p&=-\eta_+^+f_2+3\eta_+^-f_2-\eta_+^+f_0+3\eta_+^+f_{-2}-\eta_+^-f_0-\eta_+^-f_{-2}
\\&=-X_+f+4(\eta_+^-f_{2}+\eta_+^+f_{-2})=-X_+f+2X_+(f_{2}+f_{-2})-2iH_+(f_2-f_{-2})
\\&=-X_+f-\frac 1 2 X_+V^2f -H_+Vf=-X_+\big(\mathrm{Id}+\frac 1 2 V^2\big)f-H_+ Vf
\\&=-X_+Jf-H_+ Vf.
\end{align*}
In the previous computations, we have used \eqref{eq:JandV}.

We can now go back to the problem. Write $\tilde p=p+L(q)$ with $p$ the trace-free part of $p$ and $q\in C^{\infty}(M;T^*M)$. The previous computation shows that $p$ is given by
\begin{equation}
\label{eq:p}
\pi_3^*p=-\frac 1 8 (X_+Jf+H_+ Vf).
\end{equation}

\textbf{Computing $Q(S)-D_{g_0}p$ as a function of $S$.}
In particular, this means that one has, using $[D_{g_0},L]=0$, for some one-form $q$, 
$$Q(S)-D_{g_0}p=D_{g_0}(\tilde p- p)=D_{g_0}(L(q))=L(D_{g_0}q). $$
But there exists $W(S)\in C^{\infty}(M;S^2T^*M)$ such that $Q(S)-D_{g_0}p=L(W(S))$ and the injectivity of $L$ implies $W(S)=D_{g_0}q$. We now compute the tensor $W(S)$, using the fact that $H_+H_+f=8X_+p$
\begin{equation}
\label{eq:V}
\begin{split}
V(f,h):&=\pi_4^*(-D_{g_0}^JD_{g_0}^JS-8D_{g_0}p)=H^2\pi_2^*S-8X\pi_3^*p
\\&=(H_++H_-)^2(f+h)+(X_++X_-)(X_+Jf+H_+ Vf)
\\&=\big((H_+H_-+H_-H_+)f+H_+H_+h+X_-X_+Jf+X_-H_+ Vf\big)\in \Omega_2
\\&+\big(H_-H_-f+H_-H_+h\big)\in \Omega_0.
\end{split}
\end{equation}
There exists a unique $Q_0(S)\in C^{\infty}(M;S^2T^*M)$ such that $\pi_2^*Q_0(S)=V(f,h)$ and
\begin{equation}
\label{eq:W(S)}
W(S)=\frac 1 4\nabla^* \nabla S+\frac 1 2 (-S+ (\mathrm{tr}_{g_0} S)g_0+S\circ J)+\frac 1 8Q_0(S).
\end{equation}
Since $W(S)=D_{g_0}q$ and $S$ is divergence-free, one has $\langle W(S), S\rangle_{L^2(S^2T^*M)} =0$.

\textbf{Bounding $\langle W(S),S\rangle$ from below.}
The proof of Proposition \ref{sinj} reduces to the following estimate.
\begin{prop}[Coercive estimate for complex hyperbolic quotients]
\label{propcoercive}
Let $(M^n,g_0)$ be a compact quotient of the complex hyperbolic space. There exists $C_n>0$ such that for any $S\in C^{\infty}(M;S^2T^*M)\cap \mathrm{Ker}(D_{g_0}^*)$, one has
\begin{equation}
\label{eq:final}
 \langle W(S),S\rangle_{L^2(S^2T^*M)} \geq C_n(\|S_0\|_{L^2(S^2T^*M)}^2+\|h\|_{L^2(M)}^2),
\end{equation}
where $S=S_0+hg_0$ with $S_0\in C^{\infty}(M;S_0^2T^*M)$ and $h\in C^{\infty}(M)$.
\end{prop}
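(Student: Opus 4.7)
The plan is to unpack $\langle W(S), S\rangle_{L^2(S^2T^*M)}$ by translating everything to the unit tangent bundle. I would write $\pi_2^* S = f + h$ with $f = \pi_2^* S_0 \in \Omega_2$ and $h \in \Omega_0$, and exploit a sequence of cancellations in the $S = S_0 + hg_0$ decomposition: because $J$ is a pointwise isometry one has $(hg_0)\circ J = hg_0$ and $\operatorname{tr}(S_0\circ J)=0$; because $\nabla g_0 = 0$ one has $\nabla^*\nabla(hg_0) = (\Delta h) g_0$; and because $\operatorname{tr}$ commutes with $\nabla^*\nabla$, the tensor $\nabla^*\nabla S_0$ remains trace-free. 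These identities make all cross inner products between $S_0$ and $hg_0$ coming from the piece $W_0(S) := \tfrac14 \nabla^*\nabla S + \tfrac12(-S + (\operatorname{tr}S)g_0 + S\circ J)$ vanish, producing a clean split into an $S_0$-contribution and an $h$-contribution.

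For the $S_0$-part I would apply Weitzenböck \eqref{eq:Weit2}, together with $|\langle S_0\circ J, S_0\rangle|\leq \|S_0\|^2$, to control $\langle W_0(S), S_0\rangle$ from below by a positive multiple of $\|S_0\|^2$; for the $h$-part the identity $\langle W_0(S), hg_0\rangle = \tfrac{n}{8}\|\nabla h\|^2 + \tfrac{n^2}{4}\|h\|^2$ is manifestly coercive. The delicate contribution is $\tfrac18\langle Q_0(S), S\rangle$, which via \eqref{eq:V} and the conformal identity \eqref{eq:conformal} equals a constant multiple of $\langle V(f,h), f+h\rangle_{L^2(SM)}$. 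Integrating by parts with $X_+^* = -X_-$, $H_+^* = -H_-$, $V^* = -V$, this pairing becomes
\[
-\|H_-f\|^2 - \|H_+f\|^2 - \|H_+h\|^2 - 2\langle H_- f, H_+ h\rangle - \langle X_+ Jf, X_+ f\rangle + \langle V f, H_- X_+ f\rangle.
\]
The Pestov inequalities \eqref{eq:Pestov}, \eqref{eq:PestovH} give $\|H_-f\|^2 \leq \|H_+ f\|^2$ and $\|X_- f\|^2 \leq \|X_+ f\|^2$; Lemma \ref{lemm1} bounds $\|H_+f\|^2$ and $\|X_+f\|^2$ by $(\Lambda_3^n)^{-1} \langle \nabla^*\nabla S_0, S_0\rangle$; the divergence-free identity \eqref{eq:divfree} controls $\|X_+ h\|^2$ in terms of $\|X_- f\|^2$; and $\|H_+ h\|^2$ is a dimensional constant times $\|\nabla h\|^2$, which is absorbed into the $\tfrac{n}{8}\|\nabla h\|^2$ coming from the Laplace piece. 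The Cauchy–Schwarz cross term $\langle H_-f, H_+h\rangle$ is split with Young's inequality between these two reservoirs. The last term $\langle V f, H_- X_+ f\rangle$ I would handle either by Cauchy–Schwarz combined with the commutation relations \eqref{eq:comm}, or more systematically by decomposing $f$ into $V$-eigenspaces and rewriting everything through the raising/lowering operators $\eta^\pm_\pm$ of \eqref{eq:eta}, so that its imaginary structure makes the contribution non-positive rather than merely controlled.

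The principal obstacle is that $\langle Q_0(S), S\rangle$ is essentially a negative quadratic form and must be defeated by the Weitzenböck positivity. The plan is to reserve a small fraction $\alpha \langle \nabla^*\nabla S_0, S_0\rangle$ of the $\tfrac14\langle \nabla^*\nabla S_0, S_0\rangle$ term (without applying Weitzenböck to it) and use it to absorb all the first-order negative contributions via Lemma \ref{lemm1}, then apply Weitzenböck to the remaining fraction $(\tfrac14 - \alpha)\langle \nabla^*\nabla S_0, S_0\rangle$. Quantitative feasibility reduces to the existence of some $\alpha\in(0,\tfrac14)$ for which $(n+3)(\tfrac14 - \alpha) - \tfrac14 > 0$ while $\alpha$ is large enough to dominate the $(8\Lambda_2^n\Lambda_3^n)^{-1}$-weighted derivative contributions; since both $(n+3)$ and $\Lambda_3^n/\Lambda_2^n$ grow with $n$, this is exactly where the hypothesis $n\geq 4$ (equivalently the exclusion of the $\mathbb{C}\mathbb{H}^1 \simeq \mathbb{R}\mathbb{H}^2$ case) enters. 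The precise tracking of these constants is the main computational content of the argument.
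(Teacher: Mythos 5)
Your architecture matches the paper's: split $W=Q_1+\tfrac18 Q_0$ with $Q_1(S)=\tfrac14\nabla^*\nabla S+\tfrac12(-S+(\operatorname{tr}S)g_0+S\circ J)$, observe that $Q_1$ is diagonal in the decomposition $S=S_0+hg_0$, handle the $h$-block by the manifestly positive Laplacian term, reserve a fraction of $\langle\nabla^*\nabla S_0,S_0\rangle$ to absorb the negative form $\langle Q_0S,S\rangle$, and close with Weitzenb\"ock \eqref{eq:Weit2} plus $|\langle S_0\circ J,S_0\rangle|\le\|S_0\|^2$. However, the step you defer — "the precise tracking of these constants" — is not a routine verification but the actual content of the proof, and your two proposed routes for it are not equally viable. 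A budget count shows why: after Weitzenb\"ock the final bound has the shape $\bigl(\beta(n+6)-1\bigr)\|S_0\|^2$ with $\beta=\tfrac14-\tfrac c8$, where $c$ is the coefficient of $\|\nabla_{g_0}S_0\|^2$ in the lower bound for $-\langle Q_0S,S\rangle$; for $n=4$ (the space $\mathbb{C}\mathbb{H}^2$, which must be covered) this forces $c<6/5$. Plain Cauchy--Schwarz on the terms $-\|Hf\|^2-\langle X_+Jf,X_+f\rangle-\langle H_+Vf,X_+f\rangle$ yields $c\approx 3$, which annihilates the entire $\tfrac14\langle\nabla^*\nabla S_0,S_0\rangle$ reservoir and leaves nothing for Weitzenb\"ock. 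So your first option fails, and your $\alpha$-splitting cannot be made "quantitatively feasible" without the sharper mechanism.

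That mechanism, which the paper supplies and your sketch does not, has three ingredients. First, the cross term is computed \emph{exactly} in the $V$-eigenspace decomposition: $\langle X_+Jf+H_+Vf,X_+f\rangle=\|X_+f\|^2-4(\|\eta_+^+f_2\|^2+\|\eta_+^-f_{-2}\|^2)$; note it is not "non-positive" after sign flip — the positive remainder $4(\|\eta_+^+f_2\|^2+\|\eta_+^-f_{-2}\|^2)$ is indispensable. Second, the divergence-free condition is used in the form $\eta_-^+f_2=\eta_-^-f_{-2}=0$ (not to control $\|X_+h\|^2$), which gives $\|H_-f\|^2=\|X_-Jf\|^2\le 2\|X_-f\|^2+8(\|\eta_-^-f_2\|^2+\|\eta_-^+f_{-2}\|^2)$. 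Third, the sharpened Pestov-type inequality \eqref{eq:NewPestov}, $\|\eta_+^+u\|^2\ge\|\eta_-^-u\|^2+\lambda\|u\|^2$ on $E^\lambda_m$, cancels the $\eta_-$ terms from the second ingredient against the positive remainder from the first, leaving $\langle V(f,h),f\rangle\ge-(\|Xf\|^2+\|Hf\|^2)-\tfrac12\|H_+h\|^2$, i.e.\ the coefficient $1$ that makes $c=\tfrac{97}{96}$ after Lemma \ref{lemm1}. A further bookkeeping error in your sketch: by \eqref{eq:conformal} the pairing $\langle Q_0S,S\rangle$ is $\Lambda_2^n\langle V(f,h),f\rangle+\Lambda_0^n\langle V(f,h),h\rangle$ with two \emph{different} conformal factors, not a single constant multiple of $\langle V(f,h),f+h\rangle$; the paper flags the $m$-dependence of $\Lambda_m^n$ as a central point of this section, and your symmetric cross term $-2\langle H_-f,H_+h\rangle$ reflects the incorrect equal weighting. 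None of these issues changes the overall strategy, but without the three exact cancellations above the inequality \eqref{eq:final} does not follow.
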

\begin{proof}
We will start by considering the first term  appearing in \eqref{eq:W(S)}:
\begin{equation}
\label{eq:Q_1}
Q_1(S):=\frac 1 4\nabla^* \nabla S+\frac 1 2 ({-}S+ (\mathrm{tr}_{g_0} S)g_0+S\circ J).
\end{equation}
By definition, one has $W(S)=Q_1(S)+\tfrac 18 Q_0(S)$ and we first investigate $\langle Q_1(S),S\rangle$. We remark that since $J$ and $\nabla^*\nabla$ commute with the trace, $Q_1$ preserves the decomposition $S=S_0+hg_0$ where $S_0$ is the trace-free part of $S$ and $h\in C^{\infty}(M)$. More explicitly
$$\langle Q_1(S),S\rangle_{L^2(S^2T^*M)}=\langle Q_1(S_0),S_0\rangle_{L^2(S^2T^*M)}+\langle Q_1(hg_0),hg_0\rangle_{L^2(S^2T^*M)}. $$
We first compute the second term, using $(hg_0)\circ J=hg_0$,
\begin{equation}
\label{eq:h}
\langle Q_1(hg_0),hg_0\rangle_{L^2(S^2T^*M)}=\frac n 4 \langle \Delta h, h\rangle_{L^2(M)}+\frac{n^2}{2}\|h\|^2_{L^2(M)}. 
\end{equation}
Now the trace-free part contribution is
\begin{equation}
\label{eq:S_0}
\langle Q_1(S_0),S_0\rangle=\frac 1 4 \langle \nabla^*\nabla S_0, S_0\rangle-\frac{1}{2}\|S_0\|^2+\frac 1 2 \langle S_0\circ J,S_0\rangle. 
\end{equation}
Summing \eqref{eq:S_0} and \eqref{eq:h}, we obtain
\begin{equation}
\label{eq:Q_1eq}
\langle Q_1(S),S\rangle=\frac 1 4 \langle \nabla^*\nabla S_0, S_0\rangle-\frac{1}{2}\|S_0\|^2+\frac 1 2 \langle S_0\circ J,S_0\rangle+\frac n 4 \langle \Delta h, h\rangle+\frac{n^2}{2}\|h\|^2. 
\end{equation}
Next, we study the contribution of $Q_0(S)$. We prove the following lemma:
\begin{lemm}
One has the following bound:
\begin{equation}
\label{eq:lower}
\langle Q_0 S,S\rangle_{L^2(S^2T^*M)}\geq -\frac{97}{96}\|\nabla_{g_0}S_0\|_{L^2(S^2T^*M)}^2-\frac{n+5}{4}\|\nabla h\|_{L^2(T^*M)}^2.
\end{equation}
\end{lemm}
\begin{proof}
Recall that $\pi_2^*S=f+h$ where $f\in \Omega_2$ and $h\in \Omega_0$. Using \eqref{eq:conformal}, one sees that
\begin{equation}
\label{eq:utile}
\langle Q_0 S,S\rangle_{L^2(S^2T^*M)}=\Lambda_2^n\langle V(f,h),f\rangle_{L^2(SM)} +\Lambda_0^n\langle V(f,h),h\rangle_{L^2(SM)}.   
\end{equation}
This allows us to do the calculations using spherical harmonics and in the following computations, $\langle\cdot, \cdot\rangle$ will denote the scalar product on $L^2(SM)$.
We compute the first term using \eqref{eq:V}, the fact that $X_+^*=-X_-$ and $H_+^*=-H_-$,
\begin{equation}
\label{eq:V(f,h)f}
\begin{split}
\langle V(f,h),f\rangle&=\langle (H_+H_-+H_-H_+)f+H_+H_+h+X_-X_+Jf+X_-H_+ Vf,f\rangle
\\&=-\|H_+f\|^2-\langle X_+Jf+ H_+Vf,X_+f\rangle-\|H_-f\|^2-\langle H_+h,H_-f\rangle
\end{split}
\end{equation}
We use the fact that $J$ is an isometry and \eqref{eq:H=JXJ} to obtain 
$$ \|H_+f\|^2=\langle JX_+Jf,JX_+Jf\rangle= \| X_+Jf\|^2.$$
Now, we observe that using \eqref{eq:XandH}, \eqref{eq:JandV} and decomposing into eigenspaces of $V$ gives:
$$
\begin{cases}
X_+f=\eta_+^+f_2+(\eta_+^+f_0+\eta_+^-f_2)+(\eta_+^-f_0+\eta_+^+f_{-2})+\eta_+^-f_{-2}
\\
X_+Jf=-\eta_+^+f_2+(\eta_+^+f_0-\eta_+^-f_2)+(\eta_+^-f_0-\eta_+^+f_{-2})-\eta_+^-f_{-2}
\\
H_+Vf=-2\eta_+^+f_2+2\eta_+^-f_2+2\eta_+^+f_{-2}-2\eta_+^-f_{-2}.
\end{cases} $$
This implies, using the fact that the eigenspaces of $V$ are orthogonal, that we have
\begin{equation}
\label{eq:celle}
\begin{split}\langle X_+Jf+H_+Vf,X_+f\rangle=&-3\| \eta_+^+f_2\|^2+\|\eta_+^+f_0+\eta_+^-f_2\|^2+\|\eta_+^-f_0+\eta_+^+f_{-2}\|^2\\&-3\|\eta_+^-f_{-2}\|^2
=\|X_+f\|^2-4(\| \eta_+^+f_2\|^2+\|\eta_+^-f_{-2}\|^2).
\end{split}
\end{equation}
To bound the term $\|H_-f\|^2$ we will use the fact that $S$ is divergence-free and \eqref{eq:divfree}.
$$
X_-f=\eta_-^+f_2+(\eta_-^-f_2+\eta_-^+f_0)+(\eta_-^-f_0+\eta_-^+f_{-2}) +\eta_-^-f_{-2}.
$$
But using \eqref{eq:divfree} with \eqref{eq:XandH} and projecting onto eigenspaces of $V$ gives 
\begin{equation}
\label{eq:zero}
D_{g_0}^*S=0\ \Rightarrow \ \eta_-^+f_2=\eta_-^-f_{-2}=0.
\end{equation}
We notice that this implies that $X_-f-X_-Jf=2(\eta_-^-f_2+\eta_-^+f_{-2})$. In particular, 
\begin{equation}
\label{eq:22}
\begin{split}\|H_-f\|^2=\|X_-Jf\|^2&\leq 2\|X_-f\|^2+2\|X_-f-X_-Jf\|^2
\\&=2\|X_-f\|^2+ 8(\| \eta_-^-f_2\|^2+\|\eta_-^+f_{-2}\|^2).
\end{split}
\end{equation}
Using Cauchy-Schwartz's inequality, we can bound the remaining term:
\begin{equation}
\label{eq:23}
\begin{split} -\langle H_+h,H_-f\rangle&\geq -\frac 1 2\|H_+h\|^2-\frac 12 \|H_-f\|^2
\\&\geq -\frac 1 2\|H_+h\|^2-\|X_-f\|^2- 4(\| \eta_-^-f_2\|^2+\|\eta_-^+f_{-2}\|^2). 
\end{split}
\end{equation}
Plugging \eqref{eq:celle} and \eqref{eq:23} into \eqref{eq:V(f,h)f} gives
\begin{align*}\langle V(f,h),f\rangle\geq &-(\|Hf\|^2+\|X_+f\|^2-4(\| \eta_+^+f_2\|^2+\|\eta_+^-f_{-2}\|^2)+\frac 1 2 \|H_+h\|^2)
\\&-(\|X_-f\|^2+4(\| \eta_-^-f_2\|^2+\|\eta_-^+f_{-2}\|^2)).
\end{align*}
Now, we can use \eqref{eq:NewPestov} together with \eqref{eq:Pestov},\eqref{eq:PestovH} to get $ \|\eta_-^-f_2\|^2\leq \|\eta_+^+f_2\|^2$, as well as $ \|\eta_-^+f_{-2}\|^2\leq \|\eta_+^-f_{-2}\|^2$. The second inequality is deduced from the first one by complex conjugation. In total, we have obtained
\begin{equation}
\label{eq:Vf}
\langle V(f,h),f\rangle\geq -(\|Xf\|^2+\|Hf\|^2)-\frac 1 2 \|H_+h\|^2.
\end{equation}
The second term in \eqref{eq:utile} is bounded using Cauchy-Schwartz's inequality and \eqref{eq:PestovH}. Indeed, from \eqref{eq:V} recall that the $\Omega_0$-component of $V(f,h)$ is $H_-H_-f+H_-H_+h$.\begin{equation}
\begin{split}
\label{eq:2}
\langle V(f,h),h\rangle&\geq -\|H_+h\|^2-\langle 2H_-f,\tfrac 1 2H_+h\rangle \geq -3\|H_+h\|^2-\frac 1 8 \|H_+f\|^2
\\&\geq-3\|H_+h\|^2-\frac 1 8(\|Xf\|^2+\|Hf\|^2).
\end{split}
\end{equation}

We plug the lower bounds \eqref{eq:Vf} and \eqref{eq:2} into \eqref{eq:utile}. We then use the bounds of Lemma \ref{lemm1}, more precisely, we use the first bound on $\|H_+h\|^2_{L^2(SM)}$ and the second one on $\|Xf\|_{L^2(SM)}^2+\|Hf\|_{L^2(SM)}^2$ and $\|\nabla h\|_{L^2(T^*M)}$: 
\begin{align*}
\langle Q_0 S,S\rangle_{L^2(S^2T^*M)}&=\Lambda_2^n\langle V(f,h),f\rangle_{L^2(SM)} +\Lambda_0^n\langle V(f,h),h\rangle_{L^2(SM)}
\\& \geq -\Lambda_2^n(\Lambda_2^n)^{-1}\|\nabla_{g_0}S_0\|_{L^2(S^3T^*M)}^2-\frac{1}2\Lambda_2^n(\Lambda_1^n)^{-1}\|\nabla h\|_{L^2(T^*M)}^2
\\&-\frac 1 8\Lambda_0^n(\Lambda_2^n)^{-1}\|\nabla_{g_0}S_0\|_{L^2(S^3T^*M)}^2
- 3\Lambda_0^n(\Lambda_1^n)^{-1}\|\nabla h\|_{L^2(T^*M)}^2
\\&\geq -\left(1+\frac{1}{8n(n/2+1)} \right)\|\nabla_{g_0}S_0\|_{L^2(S^2T^*M)}^2
\\&-\left(\frac{n+2}{4}+\frac{3}{n}\right)\|\nabla h\|_{L^2(T^*M)}^2,\end{align*}
where we used that $\Lambda_0^n/\Lambda_1^n=n^{-1}, \Lambda_2^n/\Lambda_1^n=(n+2)/4$ and $\Lambda_0^n/\Lambda_2^n=n^{-1}(\tfrac n 2+1)^{-1}$.
Since $n\geq 4$ for any quotient of the complex hyperbolic space, the above bound implies
\eqref{eq:lower}.
This concludes the proof of the lemma.
\end{proof}
\begin{rmk}
In the previous computation, if we applied the second bound of Lemma \ref{lemm1} $($instead of the first one$)$ to the term $\|Xf\|_{L^2(SM)}^2+\|Hf\|_{L^2(SM)}^2$, we would obtain a lower bound of the form $-O\big(\tfrac 1 {n}\big)\|\nabla_{g_0}S_0\|_{L^2(S^2T^*M)}^2$ in \eqref{eq:lower}. This would give an asymptotically better lower bound $($when $n\to +\infty)$. Nevertheless, this would not make the following Weitzenböck argument work for small values of $n$ so we refrain from using it.
\end{rmk}
\textbf{Using a Weitzenböck formula.} 
We conclude the proof of Proposition \ref{propcoercive}. In the following, $\langle \cdot, \cdot\rangle$ denote the scalar product on $L^2(S^2T^*M)$. We use the fact that $W(S)=Q_1(S)+\tfrac 18Q_0(S)$, equation \eqref{eq:Q_1eq} and the lower bound \eqref{eq:lower}:
\begin{equation}
\label{eq:presquefini}
\begin{split}\langle W(S),S\rangle&\geq \frac 1 4 \left( 1-\frac{97}{192}\right)\langle \nabla^* \nabla S_0,S_0\rangle-\frac 12 \|S_0\|^2+\frac12 \langle S_0\circ J,S_0\rangle 
\\&+\frac 1 4\left(n- \frac{n+5}{8}\right)\|\nabla h\|^2+\frac{n^2}{2}\|h\|^2
\\&\geq  \frac 1 4\times  \frac{95}{192}\langle \nabla^* \nabla S_0,S_0\rangle-\frac 12 \|S_0\|^2+\frac12 \langle S_0\circ J,S_0\rangle +\frac{n^2}{2}\|h\|^2.
\end{split}
\end{equation}
To obtain \eqref{eq:final} from \eqref{eq:presquefini} it remains to obtain a lower bound of the form $C_n\|S_0\|^2$ for some $C_n>0$. 
We use the Weitzenböck formula \eqref{eq:Weit2}, indeed, one has
\begin{align*}
\frac 1 4 \times  \frac{95}{192}&\langle \nabla^* \nabla S_0,S_0\rangle-\frac 12 \|S_0\|^2+\frac12 \langle S_0\circ J,S_0\rangle 
\\ &\geq \frac 1 4\left( \frac{95}{192}((n+3)\|S_0\|^2-3\langle S_0\circ J,S_0\rangle)-2\|S_0\|^2+2\langle S_0\circ J,S_0\rangle \right)
\\&= \frac{1}{4\times 192}\big((95n-99)\|S_0\|^2+99\langle S_0\circ J,S_0\rangle \big).
\end{align*}
We can use Cauchy-Schwartz's inequality and the fact that $J$ is an isometry to bound
$$|\langle S_0\circ J, S_0 \rangle|\leq \|S_0\|\times \|S_0\circ J\|=\|S_0\|^2. $$
Then we get finally that 
\begin{equation}
\label{eq:coerS_0}
\frac 1 4 \times  \frac{95}{192}\langle \nabla^* \nabla S_0,S_0\rangle-\frac 12 \|S_0\|^2+\frac12 \langle S_0\circ J,S_0\rangle\geq \frac{1}{4\times 192}\underbrace{(95n-198)}_{>0}\|S_0\|^2,
\end{equation}
if $n>2$. Plugging \eqref{eq:coerS_0} into \eqref{eq:presquefini} concludes the proof of Proposition \ref{propcoercive} and thus the proof of Proposition \ref{sinj}.
\end{proof}

\printbibliography[
heading=bibintoc,
title={References}]

\end{document}